\newcommand{\forces}{\Vdash}
\newcommand{\Set}{\mbox{\bf Set}}
\newcommand{\worlds}{\mathcal W}
\newcommand{\flatOp}[1]{{#1}^\flat}
\newcommand{\natOp}[1]{{#1}^\natural}
\newcommand{\conssym}{\triangleright}
\newcommand{\cons}{\ifmmode\mathrel{\conssym}\else\mbox{$\conssym$}\fi}
\newcommand{\valid}{\blacktriangleright}
\newcommand{\sem}[1]{\mbox{$[\hspace{-0.05cm}[$}#1\mbox{$]\hspace{-0.05cm}]$}}
\newcommand{\semwp}[1]{\sem{#1}^{\mathcal{W}'}}
\newcommand{\vsem}[1]{\mbox{$\{\hspace{-0.05cm}\mid$} #1\mbox{$\mid\hspace{-0.05cm}\}$}}
\newcommand{\produ}[1]{\langle #1 \rangle}
\newcommand{\col}{\!:\!}
\newcommand{\kripkeForces}{\forces}
\newcommand{\kripkeWorlds}{{\mathcal K}}
\newcommand{\op}{^{\mbox{$op$}}}
\newcommand{\up}{^{\uparrow}}
\newtheorem{theorem}{Theorem}                         
\newtheorem{definition}[theorem]{Definition}                                                     
\newtheorem{lemma}[theorem]{Lemma}                             
\newtheorem{proposition}[theorem]{Proposition}
\newcommand{\base}[1]{\ensuremath{\mathcal{#1}}}
\newcommand{\calculus}[1]{\ensuremath{\mathsf{#1}}}
\newcommand{\ZFset}[1]{\ensuremath{\mathbb{#1}}}
\newcommand{\setAtoms}{\ZFset{A}}
\newcommand{\sat}{\Vdash}
\newcommand{\supp}{\Vdash}
\newcommand{\at}[1]{{ #1}}
\begin{document}

\title{Categorical Proof-theoretic Semantics}




\author*[1,2]{\fnm{David} \sur{Pym}}\email{d.pym@ucl.ac.uk}
\equalcont{These authors contributed equally to this work.}

\author*[3]{\fnm{Eike} \sur{Ritter}}\email{e.ritter@bham.ac.uk}
\equalcont{These authors contributed equally to this work.}

\author*[4]{\fnm{Edmund} \sur{Robinson}}\email{e.p.robinson@qmul.ac.uk}
\equalcont{These authors contributed equally to this work.}

\affil*[1]{\orgdiv{Computer Science and Philosophy}, \orgname{University College London}, \orgaddress{\street{Gower Street}, 
\city{London}, \postcode{WC1E 6BT}, 
\country{UK}}}

\affil[2]{\orgdiv{Institute of Philosophy}, \orgname{School of Advanced Study, University of London}, \orgaddress{\street{Malet Street}, 
\city{London}, \postcode{WC1E 7HU}, 
\country{UK}}}

\affil[3]{\orgdiv{Department of Computer Science}, \orgname{University of Birmingham}, \orgaddress{\street{Edgbaston}, 
\city{Birmingham}, \postcode{B15 2TT}, 
\country{UK}}}

\affil[3]{\orgdiv{School of Electronic Engineering and Computer Science}, \orgname{Queen Mary, University of London}, \orgaddress{\street{Mile End Road}, 
\city{London}, \postcode{E1 4NS}, 
\country{UK}}}

\abstract{
In proof-theoretic semantics, 
model-theoretic validity is replaced by proof-theoretic validity. Validity 
of formulae is defined inductively from a base giving the validity of atoms 
using inductive clauses derived from proof-theoretic rules. A key aim is 
to show completeness of the proof rules without any requirement for formal 
models. Establishing this for propositional intuitionistic logic (IPL) 
raises some technical and conceptual issues.  
We relate Sandqvist's (complete) base-extension semantics of intuitionistic 
propositional logic 
to categorical proof theory in presheaves, reconstructing categorically the 
soundness and completeness arguments, thereby demonstrating the naturality of 
Sandqvist's constructions. This naturality includes Sandqvist's treatment of disjunction
that is based on its second-order or elimination-rule presentation. 
These constructions embody not just validity, but certain forms of objects of 
justifications. This analysis is taken a step further by showing that from the 
perspective of validity, Sandqvist's semantics can also be viewed as the natural 
disjunction in a category of sheaves. 
}

\maketitle

\section{Introduction} \label{sec:intro} 

In model-theoretic semantics, logical languages are interpreted in 
mathematical structures that carry appropriate axiomatizations and 
support a notion of model-theoretic consequence. Specifically, a 
propositional sentence $\phi$ is a model-theoretic consequence 
of a set $\Gamma$ of propositional sentences iff every model $\mathcal{M}$ of 
$\Gamma$ is also a model of $\phi$; that is, 
\[
\begin{array}{rcl}
    \Gamma \models \phi & \mbox{\rm iff} & 
        \mbox{\rm for all $\mathcal{M}$, if, for all $\psi \in \Gamma$, if 
        $\mathcal{M} \models \psi$, then $\mathcal{M} \models \phi$ } 
\end{array}
\]
As Schroeder-Heister \cite{Schroeder2007modelvsproof} explains, in this definition of consequence 
we have a transmission of truth from premisses to conclusion in which 
transmission is determined by classical implication in the meta-theory. 
This definition of consequence supports inductive characterizations of 
the meanings of the logical constants. For example, 
\[
 \begin{array}{rcl}
    \mathcal{M} \models \phi \wedge \psi & \mbox{\rm iff} & 
        \mbox{\rm $\mathcal{M} \models \phi$ and $\mathcal{M} \models \psi$}   
 \end{array}
\]

Proof-theoretic consequence is usually defined as derivability in a 
formal system: a propositional sentence $\phi$ is a proof-theoretic 
consequence of a set of propositional sentences $\Gamma$ in a formal 
system $S$ --- that is, $\Gamma \vdash_S \phi$ ---  if it can be be 
derived from the elements of $\Gamma$ using the axioms and inference 
rules of $S$. This definition of consequence also supports inductive 
characterizations of the meanings of the logical constants. For example, 
\[
 \begin{array}{rcl}
    \mbox{\rm if $\Gamma \vdash_S \phi$ and $\Gamma \vdash_S \psi$, then 
        $\Gamma \vdash_S \phi \wedge \psi$}   
 \end{array}
\]
Note that the invertibility of these characterizations is a delicate 
matter (see, for example, \cite{NegriVonPlato}).

From the perspective of model-theoretic semantics, and its primary 
notion of truth, the correctness of inferences in a formal system $S$ 
is given by a soundness assertion, 
\[
  \begin{array}{rcl}
    \mbox{\rm if $\Gamma \vdash_S \phi$, then $\Gamma \models \phi$}
  \end{array}
\]
If completeness --- the converse --- also holds, then model-theoretic 
consequence and proof-theoretic consequence coincide. Note that 
completeness does not imply the invertibility of the proof-theoretic 
characterizations. 

Proof-theoretic semantics provides an alternative account of the 
meaning of the logical constants to that which is provided by 
model-theoretic semantics. Roughly speaking, there are two approaches 
to proof-theoretic semantics. First, there is what we call 
Dummett-Prawitz proof-theoretic semantics \cite{Schroeder2007modelvsproof} 
and, second, there is what we call base-extension semantics \cite{Sandqvist2015IL,Piecha2016completeness,Piecha2015failure}. The former has 
temporal priority, but the latter may be seen as being the more 
general perspective \cite{Alex:PtV-BeS}. 

The Dummett-Prawitz view arises from a philosophical reading by Dummett 
\cite{Dummett1991logical} of the normalization results by Prawitz 
\cite{Prawitz1965}. It has subsequently been developed substantially both 
mathematically and philosophically with Schroeder-Heister 
\cite{schroeder2006validity} giving a general account that clearly 
separates the semantic and computational considerations. It has largely 
been developed for IL. In this context, the rules of the natural 
deduction system $\calculus{NJ}$ for IL are taken to be \emph{a priori} 
valid. An argument is a tree of formulas whose leaves are called its 
assumptions, some of which may be labelled as discharged. An argument 
is \emph{indirect} (i.e., not direct) if it contains \emph{detours} 
in which case it may be made \emph{direct} by reduction 
\emph{\`a la } Prawitz \cite{Prawitz1965}; for example,  
the reduction of an indirect argument via $\to$ may be reduced as follows:
\[
\infer{\psi}{
   \deduce{\phi}{\mathcal{D}_1} & 
   \infer{\phi \supset \psi}
       {
           \deduce{\psi}{\deduce{\mathcal{D}_2}{[\phi]}}
       }
   }
   \quad \rightsquigarrow \quad
   \deduce{\psi}{
        \deduce{\mathcal{D}_2}{
           \deduce{\phi}{\mathcal{D}_1}
           }
       }
\]
Arguments without assumptions and without detours are said to be 
\emph{canonical proofs}, they are inherently valid. The validity of 
an arbitrary argument is determined by whether or not it represents 
according to some fixed operations (e.g., through reduction) one of 
these canonical proofs. In this way, proof-theoretic validity in 
the Dummett-Prawitz tradition is a semantics of proofs. This view 
is not our concern here. 

Base-extension semantics is a characterization of consequence given 
by an inductively defined judgment whose base case is given by proof 
rather than by truth. Crucially, while in model-theoretic semantics 
the base case of the judgment is given by truth (i.e., in the model 
$\mathcal{M}$ with interpretation $I$, the judgment $w \sat \at{p}$ 
obtains iff $w \in I(\at{p})$),  in proof-theoretic semantics 
it is instead given by provability in an atomic system; that is, 
by the following clause in which $\base{B}$ is an arbitrary atomic 
system:
 \[
    \supp_\base{B} p \quad \mbox{iff} \quad \mbox{$p$ is provable in \base{B}} 
 \]
Note that base-extension semantics relies on a (proof-irrelevant) provability judgement not on the judgement that a proof-object 
establishes a consequence. 
 
 Of course, one can give base-extension semantics that simply mimic 
 model-theoretic semantics. Goldfarb's completeness proof works by taking 
 a base-extension semantics that encodes the possible worlds structure of 
 a Kripke countermodel --- see Goldfarb \cite{goldfarbdummett2016}. This 
 approach allows completeness to be obtained while still using the usual frame semantics 
 for disjunction in IPL as given by Kripke \cite{kripke1965semantical}, 
 \[
    w \supp \phi \lor \psi \quad \text{ iff } \quad w \supp \phi \text{ or } w \supp \psi , 
 \] 
where $w$ denotes an arbitrary world. Similarly, recent work by Stafford and Nascimento \cite{SN23} obtains completeness by constructing a Kripke model from sets of base rules. 

 By contrast, in the complete base-extension semantics for IPL given by Sandqvist 
 \cite{Sandqvist2015IL}, disjunction has the following clause in which $\setAtoms$ 
 is the set of atomic propositions:
 \[
 \begin{array}{r@{\quad}c@{\quad}l}
  \sat_{\base{B}} \phi \lor \psi & \mbox{\rm iff} & 
    \mbox{for any $\base{C} \supseteq \base{B}$ and any $\at{p} \in \setAtoms$,}\\ 
    & & \mbox{\rm $\phi \supp_\base{C} \at{p}$ and $\psi \supp_\base{C} \at{p}$ implies  $\emptyset \supp_\base{C} \at{p}$} 
 \end{array}
 \]
 Completeness is obtained by using a base that mimics propositions and their proofs, rather than the elements of a Kripke model. 

While, in the spirit of Dummett-Prawitz proof-theoretic semantics, the form of the semantics for 
disjunction given above is naturally seen as a representation of the $\vee$-elimination rule of natural deduction \cite{Gentzen1935,Prawitz1965}, it is also naturally seen as the well-known definition of $\vee$ in second-order propositional logic see, for example,  \cite{GLT89,TS2000,F2006,Sandqvist2008,Sandqvist2015IL}). 
This view is also discussed in Section~\ref{sec:kripke}.

In both cases, the definition restricts the conclusion of the hypothetical assumptions and the conclusion of the definition itself to be atomic. That is,
\[
    \frac{\begin{array}{c@{\quad}c@{\quad}c}
                        & [\phi] & [\psi] \\ 
                        & \vdots & \vdots \\ 
         \phi \vee \psi & p      & p  
          \end{array}}{\hspace{11mm} p}
\]
and, writing $\bigwedge$ for the second-order propositional universal quantifier, 
\[
    \phi \vee \psi \quad = \quad 
    \bigwedge p \,.\, (\phi \supset p) \supset (\psi \supset p) \supset p 
\]
Both Sandqvist's semantics and our category-theoretic analysis of it can be understood in both these ways. 

This way of interpreting disjunction has been studied 
in the context of the atomic fragment $F_{at}$ 
\cite{FF2013a,FF2013b,FF2015,FES2020,PTP2022}  
of System F \cite{Girard71,Girard72}. In \cite{FF2013b,FF2015} it is  shown that this interpretation is faithful; that is, for any formula $\phi$ of IPL, $\phi$ is provable iff its $F_{at}$-interpretation is provable in $F_{at}$. This result might, from a perspective that lies outwith our present scope, be seen as a proof-theoretic counterpart to Sandqvist's completeness result. The link between $F_{at}$ and our account is
explained in Section~\ref{sec:kripke}. Furthermore, this form of 
the semantics for disjunction is closely related Beth's semantics; 
see, for example, the discussion of Kripke-Beth-Joyal semantics 
and the relevant historical context in \cite{LS86}. 



Gheorghiu and Pym \cite{Alex:PtV-BeS} have shown that base-extension semantics may be regarded 
as the declarative counterpart to the operational paradigm of proof-theoretic validity. 
In particular, it is shown in \cite{Alex:PtV-BeS} that Dummett-Prawitz validity can be recovered 
from base-extension semantics for intuitionistic propositional logic. 
 
Category theory is a general theory of mathematical structures and their relations 
that was introduced by Eilenberg and Mac~Lane in the middle of the 
20th century in their foundational work on algebraic topology. It provides a unifying 
language for studying mathematical structures that supports the use of 
one theory to explain another. Categorical logic is a highly developed area that 
studies logic from the point of view of categorical structures. 

The relationship between model-theoretic semantics and categorical logic and between 
proof theory and categorical logic is rich and highly developed, 
especially in the world of intuitionistic and modal logics. See, for example, 
\cite{MM1992,BlackburnEtAl,Jacobs,Seely83}. The relationship between 
the proof theory of classical logic is less well developed, but see, for 
example, \cite{Bellin2006CategoricalPT,Robinson,FuhrmannPym}. 

In this paper, we begin an exploration of the relationship between 
proof-theoretic semantics and categorical logic. There are two main 
motivations for this. First, in general, to bring proof-theoretic 
semantics into the framework of categorical logic. Second, more 
specifically, to explore certain technical aspects of the formulation 
of the base-extension semantics of intuitionistic propositional logic 
\cite{Sandqvist2015IL,Piecha2016completeness,Piecha2015failure,goldfarbdummett2016,SN23}: 
\begin{itemize}
\item[--] the formal naturality of the semantics
\item[--] the choices of semantics for disjunction. 
\end{itemize}
Here, formal naturality refers to the existence of natural transformations
between functors (as defined in \cite{LS86,MM1992,Jacobs}, for example): 
if $F$ and $G$ are functors between categories $C$ and $D$, then a natural 
transformation $\eta$ between $F$ and $G$ is family of morphisms that 
satisfies the following: 
\begin{enumerate}
    \item $\eta$ must associate to every object $x$ in $C$ an arrow 
    $\eta_x : F(x) \rightarrow G(x)$ 
    \item for every $f:x \rightarrow y$ in $C$, $\eta_y \circ F(f) = G(f) \circ \eta_x$, 
    where $\circ$ denotes composition of morphisms. 
\end{enumerate}
Informally, the notion of a natural transformation captures that a 
given map between functors can be done consistently over an entire category. 
In the situation above, we refer to the structure being `natural in $x$'. 

Our primary focus will be Sandqvist's base-extension semantics 
for intuitionistic propositional logic \cite{Sandqvist2015IL}. We give a concise 
introduction to this work in Section~\ref{sec:p-ts-IPL}. 








We now summarize the structure of the remainder of this paper. In Section~\ref{sec:p-ts-IPL}, we summarize Sandqvist's base-extension semantics for intuitionistic propositional logic, including the 
soundness and completeness of consequences derivable in NJ (recalled in  Figure~\ref{fig:nj}), denoted $\Gamma \vdash \phi$, for base-extension 
validity, denoted $\Gamma \Vdash \phi$.  We proceed, in Section~\ref{sec:categorical}, to give a categorical formulation of Sandqvist's constructions. This uses well-established (essentially type-theoretic) ideas from categorical logic (e.g., \cite{Seely83,LS86,MM1992,Jacobs}), working in a presheaf category. We establish the correctness (soundness and completeness) of our algebraic constructions relative to Sandqvist's semantics. Next, in Section~\ref{sec:kripke}, 
%
%
%
we discuss the relationships between base-extension semantics, Kripke models of intuitionistic logic, and presheaf models of type theory. In Section~\ref{sec:metatheory}, we reconstruct categorically the logical metatheory of the base-extension semantics for intuitionistic propositional logic, and establish the familiar soundness and completeness theorems with respect to NJ: that is, that $\Gamma \vdash \phi$ iff $\Gamma \models \phi$, where $\models$ denotes the consequence relation derived from our category-theoretic model. The paper concludes with a reflection, in Section~\ref{sec:disjunction}, on the semantics of disjunction, explaining how the choice of its interpretation --- essentially between the Kripke-style interpretation and the Sandqvist-style interpretation, which adopts the second-order formulation that corresponds to the proof-theoretic interpretation offered by the elimination rule --- affects completeness. 

A preliminary version of this work has been presented at the 11th Scandinavian Logic Society Symposium, 2022 \cite{Pym2022catpts}. 

\section{Base-extension Semantics for Intuitionistic Propositional Logic} \label{sec:p-ts-IPL}


Sandqvist \cite{Sandqvist2015IL} gives a base-extension proof-theoretic semantics 
for IPL for which natural deduction is sound and complete. We refer the reader 
to \cite{Sandqvist2015IL} for the detailed motivation and technical development. 
Here we give a very brief summary.

In the sequel, Romans $p$, $P$, etc., respectively denote atoms and sets of atoms; 
Greeks $\phi$, $\Gamma$, etc., respectively denote formulae and sets of formulae.  

A base $\mathcal{B}$ is a set of atomic rules (for $\vdash_\mathcal{B}$), as in Definition~\ref{def:base}, 
which also defines the application of base rules, and satisfaction in a base ($\Vdash_\mathcal{B}$). 

\begin{definition}[Base] \label{def:base} Base rules $\mathcal{R}$, application of base rules, 
and satisfaction of formulae in a 
(possibly finite) countable base $\mathcal{B}$ of rules $\mathcal{R}$ are defined as follows:
an atomic rule $\mathcal{R}$ is given as a second-order implication (denoted $\Rightarrow$) involving (sets of) atoms: $((P_1 \Rightarrow q_1) , \ldots , (P_n \Rightarrow q_n)) \Rightarrow r)$ (upper case $P_i$ denote sets of atoms, while lower case $q_j$ denote individual atoms). 
This can be thought of as denoting the proof fragment with discharged hypotheses: 
\[
\frac{\begin{array}{ccc}
        [P_1] &        & [P_n] \\
        q_1   & \ldots & q_n  
      \end{array}
}{r} \, \mathcal{R}
\]
We can now define consequence for atoms with respect to a base: 
\[
\begin{array}{rl}
\mbox{\rm (Ref)} & \mbox{\rm $P , p \vdash_\mathcal{B} p$} \\ 
(\mbox{\rm App}_\mathcal{R}) & \mbox{\rm if $((P_1 \Rightarrow q_1) , \ldots , (P_n \Rightarrow q_n)) \Rightarrow r)$ is a rule in the base and,} \\
    & \mbox{\rm for all $i \in [1,n]$, $P , P_i \vdash_\mathcal{B} q_i$, then $P \vdash_\mathcal{B} r$} 
\end{array}
\]
\end{definition}
Here, the combinators $\mbox{\rm (Ref}$ and $\mbox{\rm App}_\mathcal{R})$ specify how to 
construct proofs in a base using rules $\mathcal{R}$ expressed using the 
second-order implication $\Rightarrow$ as 
discussed above (see also Sandqvist \cite{Sandqvist2015IL}).  

\begin{definition}[Validity in a Base] \label{def:validity-in-a-base}
Validity in a base is defined inductively as follows: 
\[
\begin{array}{rl} 
\mbox{\rm (At)} &\mbox{\rm For atomic $p$, $\Vdash_\mathcal{B}p$ iff $\vdash_\mathcal{B} p$} \\ 
(\supset) & \mbox{\rm $\Vdash_\mathcal{B} \phi \supset \psi$ iff $\phi \Vdash_\mathcal{B} \psi$} \\ 
(\wedge) & \mbox{\rm $\Vdash_\mathcal{B} \phi \wedge \psi$ iff $\Vdash_\mathcal{B} \phi$ and $\Vdash_\mathcal{B} \psi$} \\     
 (\vee) & \mbox{\rm $\Vdash_\mathcal{B} \phi \vee \psi$ iff, for every atomic $p$ and every $\mathcal{C} \supseteq  \mathcal{B}$, } 
         \mbox{\rm if $\phi \Vdash_\mathcal{C} p$ and $\psi \Vdash_\mathcal{C} p$,} \\
         & \mbox{then $\Vdash_\mathcal{C} p$}\\
(\bot) & \mbox{\rm $\Vdash_\mathcal{B} \bot$ iff, for all atomic $p$, $\Vdash_\mathcal{B} p$} \\
\mbox{\rm and} & \\
\mbox{\rm (Inf)} & \mbox{\rm for $\Theta \neq \emptyset$, 
        $\Theta \Vdash_\mathcal{B} \phi$ iff, for every $\mathcal{C} \supseteq \mathcal{B}$, 
        if $\Vdash_\mathcal{C} \theta$ for every $\theta \in \Theta$,} \\ 
        & \mbox{\rm  then $\Vdash_\mathcal{B} \phi$} 
\end{array}
\]
\end{definition} 

\begin{definition}[Validity] \label{def:validity}
Define (cf.~\cite{Sandqvist2015IL}) $\Gamma \Vdash \phi$ as: for all $\mathcal{B}$, 
if $\Vdash_\mathcal{B} \psi$ for all $\psi \in \Gamma$, then $\Vdash_\mathcal{B} \phi$.
\end{definition}

Write $\Gamma \vdash \phi$ to denote that $\phi$ is provable from $\Gamma$ in the natural deduction 
systems NJ (cf.~\cite{Sandqvist2015IL}). For reference, NJ is summarized in Figure~\ref{fig:nj}. 

\begin{figure}[t]
\hrule 
\[
\begin{array}{c@{\quad\quad}c}
    & \infer[\bot{E}]{\Gamma \vdash \phi}{\Gamma \vdash \bot} \\ [4pt] 
\infer[\wedge{I}]{\Gamma \vdash \phi \wedge \psi}{\Gamma \vdash \phi & \Gamma \vdash \psi}
    &  \infer[\wedge{E}]{\Gamma \vdash \phi}{\Gamma \vdash \phi \wedge \psi} \quad 
            \infer[\wedge{E}]{\Gamma \vdash \psi}{\Gamma \vdash \phi \wedge \psi} \\ [4pt] 
\infer[\vee{I}]{\Gamma \vdash \phi \vee \psi}{\Gamma \vdash \phi} \quad 
            \infer[\vee{I}]{\Gamma \vdash \phi \vee \psi}{\Gamma \vdash \psi}     
    &  \infer[\vee{E}]{\Gamma \vdash \chi}
            {\Gamma \vdash \phi \vee \psi & \Gamma, \phi \vdash \chi &
              \Gamma, \psi \vdash \chi} \\ [4pt]
\infer[\supset{I}]{\Gamma \vdash \phi \supset \psi}{\Gamma , \phi \vdash \psi}
    & \infer[\supset{E}]{\Gamma \vdash \psi}{\Gamma \vdash \phi & \Gamma \vdash \phi \supset \psi} \\ [4pt] 
\end{array}
\]
\hrule 
\caption{The calculus NJ (in sequential form and eliding $\top$) \cite{Prawitz1965}}
\label{fig:nj}
\end{figure}

\begin{theorem}[Soundness] \label{thm:BaseSoundness}
If $\Gamma \vdash \phi$, then $\Gamma \Vdash \phi$. 
\end{theorem}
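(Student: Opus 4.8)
The plan is to prove soundness by induction on the structure of the NJ derivation of $\Gamma \vdash \phi$, showing that each rule of NJ (Figure~\ref{fig:nj}) preserves base-extension validity in the sense of Definition~\ref{def:validity}. Concretely, I would set up the induction so that the hypothesis gives, for each premise sequent $\Delta \vdash \chi$ of the last rule applied, that $\Delta \Vdash \chi$, and I must conclude $\Gamma \Vdash \phi$. Unwinding Definition~\ref{def:validity} and the (Inf) clause of Definition~\ref{def:validity-in-a-base}, the goal in each case is: fix an arbitrary base $\mathcal{B}$ with $\Vdash_\mathcal{B} \psi$ for all $\psi \in \Gamma$, and show $\Vdash_\mathcal{B} \phi$; moreover for the introduction rules involving hypothetical premises I will need the stronger monotonicity-style fact that this persists along base extensions $\mathcal{C} \supseteq \mathcal{B}$. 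So a useful preliminary lemma is \emph{monotonicity}: if $\Gamma \Vdash_\mathcal{B} \phi$ and $\mathcal{C} \supseteq \mathcal{B}$ then $\Gamma \Vdash_\mathcal{C} \phi$ (immediate from the quantification over all extensions in (Inf), together with transitivity of $\supseteq$), and likewise $\vdash_\mathcal{B} p$ implies $\vdash_\mathcal{C} p$.

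The routine cases are the $\wedge$ and $\supset$ rules and the $\vee$-introduction rules, which follow directly by matching the NJ rule against the corresponding clause of Definition~\ref{def:validity-in-a-base}. For instance, for $\supset{I}$: assuming $\Gamma,\phi \Vdash \psi$, to show $\Gamma \Vdash \phi \supset \psi$, fix $\mathcal{B}$ with $\Vdash_\mathcal{B}\gamma$ for all $\gamma \in \Gamma$; by the $(\supset)$ clause it suffices to show $\phi \Vdash_\mathcal{B} \psi$, i.e.\ (by (Inf)) that for every $\mathcal{C}\supseteq\mathcal{B}$ with $\Vdash_\mathcal{C}\phi$ we have $\Vdash_\mathcal{C}\psi$; this follows from the induction hypothesis applied to $\mathcal{C}$, using monotonicity to also get $\Vdash_\mathcal{C}\gamma$ for $\gamma\in\Gamma$. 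The $\wedge{I}$, $\wedge{E}$, $\supset{E}$, and $\vee{I}$ cases are analogous, each being a direct unfolding. The $\bot{E}$ case uses the $(\bot)$ clause: if $\Gamma \Vdash \bot$ and $\phi$ is a formula, then at any $\mathcal{B}$ validating $\Gamma$ we get $\Vdash_\mathcal{B} p$ for every atom $p$, and one then needs that $\Vdash_\mathcal{B} p$ for all atoms forces $\Vdash_\mathcal{B}\phi$ for all $\phi$ — an auxiliary induction on $\phi$ (the $\bot$-explosion lemma), which is itself straightforward but must be stated.

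The main obstacle is the $\vee{E}$ case, because Sandqvist's disjunction clause $(\vee)$ is the second-order/elimination-rule formulation rather than the Kripke-style one, so it does not directly hand us a disjunct to case-split on. Here the argument is: suppose $\Gamma \Vdash \phi\vee\psi$, $\Gamma,\phi \Vdash \chi$, and $\Gamma,\psi \Vdash \chi$, and we want $\Gamma \Vdash \chi$. Fix $\mathcal{B}$ validating $\Gamma$; so $\Vdash_\mathcal{B}\phi\vee\psi$. The key subtlety is that $\chi$ need not be atomic, whereas the $(\vee)$ clause only yields atomic conclusions; the standard device (following Sandqvist) is to reduce to the atomic case — show that for every atom $p$ and every $\mathcal{C}\supseteq\mathcal{B}$, if $\chi \Vdash_\mathcal{C} p$ then $\Vdash_\mathcal{C} p$, and then invoke a ``generation''/completeness-flavoured lemma that this atomic condition suffices to conclude $\Vdash_\mathcal{B}\chi$. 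To get the atomic condition, fix such $p$ and $\mathcal{C}$: from $\Gamma,\phi\Vdash\chi$ and $\chi\Vdash_\mathcal{C}p$ one derives $\phi\Vdash_\mathcal{C}p$ (composing the inference relations, using monotonicity so that $\mathcal{C}$ still validates $\Gamma$), similarly $\psi\Vdash_\mathcal{C}p$, and then the $(\vee)$ clause applied to $\Vdash_\mathcal{C}\phi\vee\psi$ (which holds by monotonicity) gives $\Vdash_\mathcal{C}p$, as required. I would isolate the needed auxiliary facts — monotonicity, the $\bot$-explosion lemma, the ``$(\chi \Vdash_\mathcal{C} p \Rightarrow\, \Vdash_\mathcal{C} p)$ for all atomic $p$ implies $\Vdash_\mathcal{B}\chi$'' lemma, and transitivity/composition of the $\Vdash$ relation — as separate lemmas before the main induction, since the $\vee{E}$ case is where they all come together and is the only genuinely delicate step.
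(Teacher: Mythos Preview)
The paper does not actually give its own proof of Theorem~\ref{thm:BaseSoundness}: it is stated in Section~\ref{sec:p-ts-IPL} as Sandqvist's result, with the reader referred to \cite{Sandqvist2015IL}. The paper's own soundness argument is Theorem~\ref{thm:CatSoundness}, which concerns the categorical relation $\models$ rather than $\Vdash$, and is tied back to $\Vdash$ only through Proposition~\ref{prop:equiv}, whose proof itself invokes Sandqvist's results.

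Your outline is correct and is essentially a reconstruction of Sandqvist's direct argument. The auxiliary lemma you isolate for the $\vee E$ case --- that $\Vdash_\mathcal{B}\chi$ holds whenever, for every $\mathcal{C}\supseteq\mathcal{B}$ and every atom $p$, $\chi\Vdash_\mathcal{C}p$ implies $\Vdash_\mathcal{C}p$ --- is precisely Sandqvist's key lemma, proved by a separate structural induction on $\chi$. One small economy: your $\bot$-explosion lemma is already a special case of this same lemma (if every atom is valid at $\mathcal{B}$ then, by monotonicity, the implication $\chi\Vdash_\mathcal{C}p \Rightarrow {\Vdash_\mathcal{C}p}$ holds vacuously at every $\mathcal{C}\supseteq\mathcal{B}$), so you need not prove it separately. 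It is also worth observing that the paper's proof of Theorem~\ref{thm:CatSoundness}, though stated for $\models$, mirrors your structure: an outer induction on the NJ derivation with an inner induction on $\chi$ to handle $\vee E$.
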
 

\begin{theorem}[Completeness] \label{thm:BaseCompleteness}
If $\Gamma \Vdash \phi$, then $\Gamma \vdash \phi$. 
\end{theorem}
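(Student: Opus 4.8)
The plan is to reconstruct Sandqvist's argument: build a single base that internalises the natural-deduction system restricted to the subformulas relevant to the query, and then transfer information across it in both directions. First I would fix a countable set $\Xi$ of formulas, closed under subformulas, with $\Gamma\cup\{\phi\}\subseteq\Xi$, together with an injection $\xi\mapsto\flatOp{\xi}$ sending each atom to itself and each non-atomic $\xi\in\Xi$ to a distinct fresh atom, extended to sets pointwise. Then I would define the base $\mathcal{N}$ whose rules are exactly the atomic versions of the NJ rules on $\Xi$: for $\xi=\xi_1\wedge\xi_2$, the rules $(\flatOp{\xi_1},\flatOp{\xi_2}\Rightarrow\flatOp{\xi})$, $(\flatOp{\xi}\Rightarrow\flatOp{\xi_1})$, $(\flatOp{\xi}\Rightarrow\flatOp{\xi_2})$; for $\xi=\xi_1\supset\xi_2$, the rules $((\flatOp{\xi_1}\Rightarrow\flatOp{\xi_2})\Rightarrow\flatOp{\xi})$ and $(\flatOp{\xi},\flatOp{\xi_1}\Rightarrow\flatOp{\xi_2})$; for $\xi=\xi_1\vee\xi_2$, the rules $(\flatOp{\xi_1}\Rightarrow\flatOp{\xi})$ and $(\flatOp{\xi_2}\Rightarrow\flatOp{\xi})$ together with, for \emph{every} atom $p$, the elimination rule $((\flatOp{\xi_1}\Rightarrow p),(\flatOp{\xi_2}\Rightarrow p),\flatOp{\xi}\Rightarrow p)$; and for $\bot$, the rules $(\flatOp{\bot}\Rightarrow p)$ for every atom $p$. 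Alongside this I would record two easy preliminaries: monotonicity of $\Vdash_{\mathcal{B}}$ under base extension, and the fact that adjoining nullary rules $(\Rightarrow r)$ to a base has the same effect on $\vdash_{\mathcal{B}}$ as placing the atoms $r$ among the hypotheses.

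Two bridging lemmas then do the work. The first is a syntactic simulation: for $S\cup\{\xi\}\subseteq\Xi$, if $\flatOp{S}\vdash_{\mathcal{N}}\flatOp{\xi}$ then $S\vdash\xi$ in NJ. This goes by induction on the (Ref)/(App) build-up of the atomic derivation: reading each surrogate atom back as its formula turns every rule of $\mathcal{N}$ into a derivable rule of NJ, and the extra $p$-indexed rules cause no harm — a surrogate $p$ is read back, any other $p$ is simply a spare atom in an otherwise legitimate NJ derivation. The second is the semantic lemma: for every $\xi\in\Xi$ and every $\mathcal{B}\supseteq\mathcal{N}$, $\Vdash_{\mathcal{B}}\xi$ iff $\vdash_{\mathcal{B}}\flatOp{\xi}$. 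I would prove this by induction on $\xi$, using monotonicity throughout. The atomic case is the clause (At); the $\wedge$ case uses the three conjunction rules directly; the $\supset$ case unfolds the clauses $(\supset)$ and (Inf), extracting $\flatOp{\xi_1}\vdash_{\mathcal{B}}\flatOp{\xi_2}$ by testing at the base $\mathcal{B}\cup\{(\Rightarrow\flatOp{\xi_1})\}$ and then firing $((\flatOp{\xi_1}\Rightarrow\flatOp{\xi_2})\Rightarrow\flatOp{\xi})$ in one direction and $(\flatOp{\xi},\flatOp{\xi_1}\Rightarrow\flatOp{\xi_2})$ in the other. The $\vee$ and $\bot$ cases are where Sandqvist's second-order reading of disjunction bites: the direction from $\vdash_{\mathcal{B}}\flatOp{\xi}$ to $\Vdash_{\mathcal{B}}\xi$ needs the $p$-indexed elimination rules for arbitrary $p$, whereas the converse comes from no rule at all but from unfolding the $(\vee)$ clause and instantiating its universally quantified atom with $p:=\flatOp{\xi}$ itself (then checking $\xi_i\Vdash_{\mathcal{B}}\flatOp{\xi}$ from the two $\vee$-introduction rules via (Inf)); dually, for $\bot$ one instantiates with $p:=\flatOp{\bot}$.

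With these lemmas, completeness is a short assembly. Given $\Gamma\Vdash\phi$, set $\mathcal{N}':=\mathcal{N}\cup\{(\Rightarrow\flatOp{\psi})\mid\psi\in\Gamma\}$; then $\vdash_{\mathcal{N}'}\flatOp{\psi}$, hence by the semantic lemma $\Vdash_{\mathcal{N}'}\psi$, for every $\psi\in\Gamma$. Applying the definition of $\Gamma\Vdash\phi$ at the base $\mathcal{N}'$ gives $\Vdash_{\mathcal{N}'}\phi$, so the semantic lemma gives $\vdash_{\mathcal{N}'}\flatOp{\phi}$; moving the adjoined axioms back to hypotheses yields $\flatOp{\Gamma}\vdash_{\mathcal{N}}\flatOp{\phi}$ (only finitely many are used, so the argument also covers infinite $\Gamma$), and the simulation lemma then delivers $\Gamma\vdash\phi$.

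I expect the main obstacle to be the disjunction case of the semantic lemma, with $\bot$ entirely analogous. The tension is that the $(\vee)$ clause quantifies over all atoms $p$: the direction into $\Vdash$ is obtained from a rule of $\mathcal{N}$ only because the elimination rule was included for every $p$, while the direction out of $\Vdash$ uses no rule but specialises that universal quantifier to the surrogate atom $\flatOp{\xi}$ and then climbs back through (Inf) and the introduction rules. Designing $\mathcal{N}$ so that it supports both uses at once — and verifying that the simulation lemma survives the presence of all those extra $p$-indexed rules — is the delicate part; the remaining cases and the two preliminaries are routine.
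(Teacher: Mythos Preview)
Your proposal is correct and follows precisely the route the paper indicates: the paper does not spell out a proof of this theorem but attributes it to Sandqvist and records exactly the ingredients you use --- the flattening $(-)^\flat$ of Definition~\ref{def:flatnat} and the simulated-NJ base $\mathcal{N}_{\Delta^\flat}$ of Definition~\ref{def:N}, with $\vee$E and $\bot$ ranging over \emph{all} atoms $p$. Your two bridging lemmas are Sandqvist's key lemmas (the atomic simulation of NJ and the biconditional $\Vdash_{\mathcal{B}}\xi \Leftrightarrow \vdash_{\mathcal{B}}\flatOp{\xi}$ for $\mathcal{B}\supseteq\mathcal{N}$), and your assembly via $\mathcal{N}' = \mathcal{N}\cup\{(\Rightarrow\flatOp{\psi})\mid\psi\in\Gamma\}$ is exactly how the paper later invokes ``the proof of Sandqvist's theorem~5.1'' in Lemma~\ref{lem:alg-sound}.
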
 

Sandqvist's completeness theorem \cite{Sandqvist2015IL} makes essential use of
\emph{flattening}. 

\begin{definition}[The operations $\flatOp{-}$ and $\natOp{-}$] \label{def:flatnat}
Let $\Delta$ contain all elements of $\Gamma \cup 
\{\phi\}$ and their subformulae. With every non-atomic $\delta \in
\Delta$, associate a distinct atomic $\delta^\flat \not \in \Delta$
and, for every atomic $q \in \Delta$, take $q^\flat = q$. We write
$\Delta^\flat$ for the flattening of $\Delta$. We also require the
inverse operation. For any atom $p$, define $\natOp{p}$ to be $\phi$
if $\flatOp{\phi} = p$ and $\natOp{p} = p$, otherwise.
\end{definition}

Sandqvist \cite{Sandqvist2015IL} defines a special base $\mathcal{N}$ depending on $\Delta^\flat$ as follows: 

\begin{definition}[The base $\mathcal{N}_{\Delta^\flat}$] \label{def:N}
$\mathcal{N}_{\Delta^\flat}$ is defined as the base containing exactly the following rules, corresponding to NJ (Figure~\ref{fig:nj}):
\begin{itemize}
\item[$\supset$I:] $(\phi^\flat \Rightarrow \psi^\flat) \Rightarrow (\phi \supset \psi)^\flat$  
\item[$\supset$E:] $(\phi \supset \psi)^\flat , (\Rightarrow \phi^\flat) \Rightarrow \psi^\psi$
\item[$\wedge$I:] $(\Rightarrow \phi^\flat) , (\Rightarrow \psi^\flat) \Rightarrow (\phi \wedge \psi)^\flat$ 
\item[$\wedge$E:] $(\Rightarrow (\phi \wedge \psi)^\flat) \Rightarrow \phi^\flat$ 
\item[$\wedge$E:] $(\Rightarrow (\phi \wedge \psi)^\flat) \Rightarrow \psi^\flat$ 
\item[$\vee$I:] $(\Rightarrow \phi^\flat) \Rightarrow (\phi \vee \psi)^\flat$
\item[$\vee$I:] $(\Rightarrow \psi^\flat) \Rightarrow (\phi \vee \psi)^\flat$
\item[$\vee$E:] $(\Rightarrow (\phi \vee \psi)^\flat) , (\phi^\flat \Rightarrow p) , (\psi^\flat \Rightarrow p) \Rightarrow p$
\item[$\bot$:] $(\Rightarrow \bot^\flat) \Rightarrow p$
\end{itemize}
\end{definition}

Before embarking on our category-theoretic 
analysis, we remark that in Appendix~\ref{sec:consequence-relations} we discuss the relationship between the base $\mathcal{N}_{\_}$ and how it is that validity is represented by consequence 
relations. 

\section{A Categorical Interpretation} \label{sec:categorical} 

There is a well-established way of interpreting natural deduction proofs in 
intuitionistic logic (NJ) in categories that have structure corresponding to the 
logical connectives \cite{Seely83,LS86,Jacobs}. Specifically, we interpret proofs in NJ in bicartesian closed 
categories, in which products are used to interpret conjunction, exponentials 
(function spaces) are used to interpret implication, and coproducts 
are used to interpret disjunction. Thus we obtain an interpretation of 
the following form: a morphism 
\[
    \sem{\Gamma} \stackrel{\sem{\Phi}}{\longrightarrow} \sem{\phi}
\]
interprets a proof $\Phi$ of the consequence $\Gamma \vdash \phi$. Here, $\Phi$ 
is a proof in NJ of the consequence, represented using the terms of a language 
such as the typed $\lambda$-calculus with products and sums. 

In this set-up, we work with judgements of the form 
\[
    x_1:\phi_1 , \ldots , x_i:\phi_i , \ldots , x_m:\phi_m \vdash \Phi(x_1, \ldots , x_m) : \phi   
\]
which are read, in the sense of the propositions-as-types interpretation, 
as follows: if the $x_i$s are witnesses for proofs of the $\phi_i$s, 
then $\Phi(x_1,\ldots,x_m)$ denotes a proof of $\phi$ constructed using 
the rules of NJ. 

Now, if $\Phi_i$ is a specific proof of $\phi_i$, then it can be substituted 
for $x_i$ throughout this judgement to give
\[
    x_1:\phi_1 , \ldots , x_m:\phi_m \vdash \Phi(x_1 , \ldots , x_m)[\Phi_i/x_i] : \phi   
\]
where the assumption $x_i:\phi_i$ has been removed and the occurrence of 
$x_i$ in $\Phi$ has been replaced by $\Phi_i$. 

In the setting of proof-theoretic semantics, we are concerned in the 
first instance with derivations that are restricted to the rules of a base. 
We introduce terms for derivations --- which can be seen as a restricted 
class of the terms described above --- in a base as follows:
\[
  \Phi ::= x \mid \Phi_\mathcal{R}(\Phi_1, \ldots, \Phi_m)
\]
where, as in Definition~\ref{def:base}, $\mathcal{R}$ is a rule
\[
\frac{\begin{array}{ccc}
        [P_1] &        & [P_n] \\
        q_1   & \ldots & q_n  
      \end{array}
}{r} \, \mathcal{R}
\]
of a base $\mathcal{B}$, $x$ is a witness 
for a derivation, and the $\Phi_i$s denote, inductively, derivations of atoms 
$p$, as given in Definition~\ref{def:base}. 

If $P = p_1, \ldots, p_m$ and $X = x_1, \ldots, x_m$, we write $(X \col P)$ for 
$x_1 \col p_1, \ldots, x_m \col p_m$. Using this notation, well-formed derivations 
in a base are given inductively by
\begin{gather*}
  \infer[\mbox{\rm (Ref)}]{(X \col P), x \col p\vdash_{\mathcal{B}} x \col p}{} \\[0.5cm]
  \infer[\mbox{\rm (App}_\mathcal{R})]{(X \col P) \vdash_{\mathcal{B}} \Phi_\mathcal{R} (\Phi_1, \ldots, \Phi_n)
    \col  r}
  {(X \col P),(X_i \col  P_i) \vdash_{\mathcal{B}}\Phi_i \col  q_i \quad i = 1 , \ldots , n
  }
\end{gather*}
Note that in the rule $\mbox{\rm App}_\mathcal{R}$ the variables $X_i$ are bound in the right-hand side of the conclusion. 

Writing $\Psi[\Phi/x]$ for the term obtained by substituting $\Phi$
for $x$ in $\Psi$, we have two key substitution lemmas. 
First, substitution preserves consequence:  
\begin{lemma}
  If $(X \col P)\vdash_{\mathcal{B}} \Phi \col  q$ and 
  $(X \col P), y \col  q \vdash_{\mathcal{B}} \Psi \col  r$, 
  then $(X \col  P) \vdash_{\mathcal{B}} \Psi[\Phi/y] \col  r$.
\end{lemma}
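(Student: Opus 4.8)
The plan is to prove this by induction on the derivation of $(X \col P), y \col q \vdash_{\mathcal{B}} \Psi \col r$, treating $\Phi$ as fixed. There are two cases, corresponding to the two rules generating well-formed derivations in a base.

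First, the base case $\mbox{\rm (Ref)}$: here $\Psi$ is a variable, and the conclusion has the form $(X' \col P'), z \col s \vdash_{\mathcal{B}} z \col s$ where the context $(X' \col P'), z \col s$ is (up to the split point) $(X \col P), y \col q$. If $z$ is syntactically $y$, then $s = q$, and $\Psi[\Phi/y] = \Phi$, so the required judgement $(X \col P) \vdash_{\mathcal{B}} \Phi \col q$ is exactly the first hypothesis. If $z$ is some other variable in $X$, then $\Psi[\Phi/y] = z$ is unchanged, and we recover the conclusion by a fresh application of $\mbox{\rm (Ref)}$ from the smaller context $(X \col P)$; this uses that $z \col s$ still appears in $(X \col P)$.

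Second, the inductive step $\mbox{\rm (App}_\mathcal{R})$: here $\Psi = \Phi_\mathcal{R}(\Psi_1, \ldots, \Psi_n)$, with $r$ the head atom of the rule $\mathcal{R}$, and the premisses are $(X \col P), y \col q, (X_i \col P_i) \vdash_{\mathcal{B}} \Psi_i \col q_i$ for $i = 1, \ldots, n$. To each premiss we want to apply the induction hypothesis. To do so we first weaken the first hypothesis $(X \col P) \vdash_{\mathcal{B}} \Phi \col q$ along the fresh bound variables $X_i$ — that is, we need a weakening lemma saying a derivation remains valid when extra typed variables are added to the context — obtaining $(X \col P), (X_i \col P_i) \vdash_{\mathcal{B}} \Phi \col q$; since the $X_i$ are chosen fresh, $\Phi$ is unaffected. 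The induction hypothesis then yields $(X \col P), (X_i \col P_i) \vdash_{\mathcal{B}} \Psi_i[\Phi/y] \col q_i$ for each $i$, and a single application of $\mbox{\rm (App}_\mathcal{R})$ gives $(X \col P) \vdash_{\mathcal{B}} \Phi_\mathcal{R}(\Psi_1[\Phi/y], \ldots, \Psi_n[\Phi/y]) \col r$, which is precisely $(X \col P) \vdash_{\mathcal{B}} \Psi[\Phi/y] \col r$ since substitution commutes with the term constructor.

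The only mild obstacle is bookkeeping about variable names: we must ensure the bound variables $X_i$ in the $\mbox{\rm App}_\mathcal{R}$ rule are (or can be renamed to be) distinct from $y$ and from the free variables of $\Phi$, so that substitution does not capture and so that the weakening step is legitimate; this is the standard Barendregt-convention hygiene and can be discharged by $\alpha$-renaming. One also needs the auxiliary weakening fact, which itself follows by a routine induction on derivations (each rule is stable under enlarging the ambient context), so it may simply be invoked. No genuinely deep step is involved; the content is the careful case analysis in $\mbox{\rm (Ref)}$ together with the commutation of substitution past $\Phi_\mathcal{R}$.
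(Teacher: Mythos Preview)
Your proof is correct and follows essentially the same approach as the paper: induction on the derivation of $\Psi$, with the two cases $\mbox{(Ref)}$ and $\mbox{(App}_\mathcal{R})$. You are in fact more careful than the paper, which only treats the subcase $\Psi=y$ of $\mbox{(Ref)}$ and silently elides the weakening and variable-hygiene issues you flag in the inductive step.
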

\begin{proof}
  By induction over the derivation of $\Psi$.
  \begin{description}[leftmargin=5mm,nosep]
    \item[$\Psi = y $] Suppose $(X \col P), y \col  q \vdash_{\mathcal{B}}
     y\col q$. By definition, $\Psi[\Phi/y] = \Phi$. Hence, by assumption, 
      $(X\col  P) \vdash \Psi[\Phi/y] \col r$.
    \item[$\Psi = \Phi_\mathcal{R}(\Phi_1, \ldots, \Phi_n)$] By the induction
      hypothesis, $(X \col P),(X_i \col  P_i)
      \vdash_{\mathcal{B}}\Phi_i[\Phi/y]\col  q_i$. Hence also
      $(X \col  P) \vdash_{\mathcal{B}} \Phi_\mathcal{R}(\Phi_1[\Phi/y], \ldots,
      \Phi_m[\Phi/y]) \col  r$.
  \end{description}
\end{proof}

Second, substitution is associative:  
\begin{lemma}
\label{lemmaAssoc}
For all derivations $(X \col  P) \vdash_{\mathcal{B}}\Phi_1 \col 
q_1$, $(X \col   P), y_1 \col   q_1 \vdash_{\mathcal{B}}\Phi_2
\col  q_2$, and $(X \col  P), y_2 \col  q_2 \vdash_{\mathcal{B}} \Psi \col  r$, where, without loss of generality, $y_1 \neq y_2$, we have
\[
\Psi[\Phi_2[\Phi_1/y_1]/y_2] = (\Psi[\Phi_2/y_2])[\Phi_1/y_1]
\]
\end{lemma}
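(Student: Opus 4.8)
The plan is to prove Lemma~\ref{lemmaAssoc} by structural induction on the derivation of $\Psi$, exactly paralleling the proof of the preceding substitution lemma. The two cases to consider are $\Psi = y_2$ (the only variable case that matters, since substituting $\Phi_1/y_1$ and $\Phi_2/y_2$ on any other variable $z$ leaves $z$ unchanged on both sides, and the case $\Psi = y_1$ is excluded by the context since $y_1 \col q_1$ is not among the hypotheses of $\Psi$, or handled trivially since $y_1 \neq y_2$), and the application case $\Psi = \Phi_\mathcal{R}(\Psi_1, \ldots, \Psi_n)$ where the result follows from the induction hypotheses on the $\Psi_j$ together with the fact that substitution commutes with the term former $\Phi_\mathcal{R}(-)$.

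More concretely: in the base case $\Psi = y_2$, the left-hand side is $y_2[\Phi_2[\Phi_1/y_1]/y_2] = \Phi_2[\Phi_1/y_1]$, while the right-hand side is $(y_2[\Phi_2/y_2])[\Phi_1/y_1] = \Phi_2[\Phi_1/y_1]$, so the two sides coincide by definition of substitution. For any other variable $z \neq y_1, y_2$, both sides reduce to $z$. In the inductive case, $\Psi = \Phi_\mathcal{R}(\Psi_1, \ldots, \Psi_n)$, substitution is defined componentwise, so $\Psi[\Phi_2[\Phi_1/y_1]/y_2] = \Phi_\mathcal{R}(\Psi_1[\Phi_2[\Phi_1/y_1]/y_2], \ldots, \Psi_n[\Phi_2[\Phi_1/y_1]/y_2])$ and $(\Psi[\Phi_2/y_2])[\Phi_1/y_1] = \Phi_\mathcal{R}((\Psi_1[\Phi_2/y_2])[\Phi_1/y_1], \ldots, (\Psi_n[\Phi_2/y_2])[\Phi_1/y_1])$; applying the induction hypothesis to each $\Psi_j$ equates the corresponding arguments, hence the whole terms.

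The main subtlety — and the only place where the hypothesis structure is really used — is bookkeeping about free and bound variables, so that the substitutions do not capture or lose variables and are genuinely well-defined on the derivations in question. Since the $\Phi_i$ in an application $\Phi_\mathcal{R}(\Phi_1,\ldots,\Phi_n)$ bind the auxiliary variables $X_i$ (as noted after the $\mbox{\rm App}_\mathcal{R}$ rule), one must check that $y_1, y_2$ do not clash with any bound $X_i$ and that $\Phi_1$ contains no free occurrence of $y_1$ beyond those being substituted; these follow from the typing $(X \col P) \vdash_\mathcal{B} \Phi_1 \col q_1$, which guarantees that the only free variables of $\Phi_1$ lie in $X$, so in particular $y_1 \notin \mathrm{FV}(\Phi_1)$ and the nested substitution $\Phi_2[\Phi_1/y_1]$ is unproblematic. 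With the variable conditions ($y_1 \neq y_2$, $y_1 \notin \mathrm{FV}(\Phi_1)$) in place, the identity is a purely syntactic fact about capture-avoiding substitution in a first-order term algebra, and the induction goes through without further obstacle. I expect no genuine mathematical difficulty here; the care required is entirely in stating the variable hypotheses precisely enough that the routine computation is valid.
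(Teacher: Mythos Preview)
Your proposal is correct and follows exactly the approach of the paper, which simply records ``By induction over the structure of $\Psi$.'' Your write-up supplies the routine details (variable cases, componentwise substitution in the $\Phi_\mathcal{R}$ case, and the free-variable bookkeeping) that the paper leaves implicit.
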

\begin{proof}
By induction over the structure of $\Psi$.
\end{proof}

Functor categories of the form $\Set^\mathcal{W^{\op}}$, where 
the $(-)\op$ operation takes a category and reverses its morphisms,  
can be used to interpret proofs in NJ. The use of the $(-)\op$ operator 
is in line with the convention in the 
category theory community. The reason is that $\mathcal W$ embeds in 
$\Set^\mathcal{W^{\op}}$, and indeed $\Set^\mathcal{W^{\op}}$ can be 
characterized as a completion of $\mathcal W$. For more details see 
Section~\ref{sec:kripke}.

For suitable choices of $\mathcal{W}$, these functor categories can also 
be used to interpret derivations in a base. 

The basic idea is that the interpretation of an atomic proposition $p$ in 
$\Set^{\mathcal{W}^{\op}}$ is the functor whose value at world 
$(\mathcal B, (X\!:\!P))$ is the set of derivations of $p$ in $\mathcal B$ from 
hypotheses $(X\!:\!P)$. The action on morphisms of $\mathcal W$ is given by 
substitution. We use the following definition: 

\begin{definition}[Bases and Contexts] \label{def:bases-contexts}   
Define a category $\mathcal{W}$ as follows: 
\begin{itemize}
\item[--] Objects of $\mathcal{W}$ are pairs $(\mathcal{B},(X \col  P))$, 
where $\mathcal{B}$ is a base and $(X \col P)$ is a context; 
\item[--] A morphism from $(\mathcal{B}, (X \col  P))$ to $(\mathcal{C}, (Y \col 
Q))$ is given by an inclusion of the base $\mathcal{C}$ into $\mathcal{B}$ and
a set of derivations $ X: P \vdash_{\mathcal{B}}\Phi_i \col q_i$, where 
$Q = \{q_1,\ldots,q_m\}$. We write such a morphism as $(\Phi_1,\ldots,\Phi_m)$; 
\item[--] The identity morphism on $(X \col  P)$ is $(x_1, \ldots, x_n)$; 
\item[--] The composition of a morphism $(\Phi_1, \ldots, \Phi_m)$ from $(X \col 
P)$ to $(Y\col  Q)$ and a morphism $(\Psi_1, \ldots, \Psi_k)$ from $(Y\col  Q)$ 
to $(Z\col  R)$ is 
\[
(\Psi_1[\Phi_1/y_1, \ldots, \Phi_m/y_m], \ldots, \Psi_k[\Phi_1/y_1, \ldots, \Phi_m/y_m])
\] 
\end{itemize}
\end{definition} 
Lemma~\ref{lemmaAssoc} implies that composition is associative.

Now we extend the interpretation of atomic propositions $p$ to the interpretation of 
formulae using categorical products for conjunction and exponentials in functor categories 
for implication. We use products, exponentials, and a quantification over atoms to 
represent the form of disjunction employed by Sandqvist, which corresponds to its 
second-order definition (or, alternatively, the elimination rule for $\vee$ in NJ). 
See, for example, \cite{vanDalen}.

Key to understanding our categorical formulation of the exponentials in functor 
categories is the Yoneda lemma (see ~\cite{MM1992}): let $\mathcal{C}$ be a (locally 
small) category, $\Set$ be the category of sets, and $F \in [\mathcal{C}^{op},\Set]$ 
(the category of presheaves over $\mathcal{C}$); then, for each object 
$C$ of $\mathcal{C}$, with $h^C = \hom(-, C):\mathcal{C}\op\to\Set$, the natural transformations 
$h^C \to F$ are in bijection with the elements of $F(C)$ and this 
bijection is natural in $C$ and $F$. 

We will use the Yoneda lemma to calculate the values of 
certain functors between presheaf categories. These functors are defined as right adjoints.
Given a functor $G:\mathcal{C}\to\mathcal{D}$, a {\em right adjoint} 
to $G$ is a functor $H:\mathcal{D}\to\mathcal{C}$, whose defining characteristic is that
$\mathcal{D}(Gc,d)$ is naturally isomorphic to $\mathcal{C}(c,Hd)$.  
This notion has a dual. Given $H:\mathcal{D}\to\mathcal{C}$, a {\em left adjoint} is a 
functor $G:\mathcal{C}\to\mathcal{D}$, whose defining characteristic is that 
$\mathcal{D}(Gc,d)$ is naturally isomorphic to $\mathcal{C}(c,Hd)$.
The Yoneda lemma tells us that if $F$ is defined as a 
right adjoint, then its values are certain natural transformations between functors defined 
from the corresponding left adjoint.

\begin{lemma}\label{lemma:right-adjoints-presheaves}
Suppose $L:[\mathcal{C}\op,\Set] \to [\mathcal{D}\op,\Set]$ has a right 
adjoint denoted $R: [\mathcal{D}\op,\Set] \to [\mathcal{C}\op,\Set]$. Let  
$G:\mathcal{D}\op\to\Set$. Then for any $C\in\mathcal{C}$, $RG(C)$ 
can be taken to be the set of natural transformations $L(\hom(-,C))$ to $G$.
\end{lemma}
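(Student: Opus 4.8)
The plan is to combine the Yoneda lemma with the adjunction hypothesis. First I would fix an object $C \in \mathcal{C}$ and recall that, by the Yoneda lemma, the value $RG(C)$ is naturally isomorphic to the set of natural transformations $\hom(-,C) \to RG$ in $[\mathcal{C}\op, \Set]$; here $\hom(-,C)$ is the representable presheaf $h^C$. This is the step that converts a question about the \emph{value} of a presheaf into a question about \emph{morphisms} in the presheaf category, which is exactly what lets the adjunction act.

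Next I would apply the adjunction $L \dashv R$ directly: natural transformations $h^C \to RG$ in $[\mathcal{C}\op,\Set]$ are in bijection with natural transformations $L(h^C) \to G$ in $[\mathcal{D}\op,\Set]$, since the adjunction says precisely that $[\mathcal{D}\op,\Set](L(h^C), G) \cong [\mathcal{C}\op,\Set](h^C, RG)$. Composing this bijection with the Yoneda bijection from the previous step gives $RG(C) \cong \{\text{natural transformations } L(h^C) \to G\}$, which is the claim, remembering that $h^C = \hom(-,C)$.

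Finally, since the lemma asserts that $RG(C)$ ``can be taken to be'' this hom-set, strictly there is nothing more to prove than exhibiting the displayed bijection; but to be careful I would note that both bijections involved are natural in $C$ (Yoneda is natural in $C$ by its statement in the excerpt, and the adjunction bijection is natural in the object $h^C$, hence in $C$ as $C \mapsto h^C$ is functorial), so the identification is compatible with the functorial action of $RG$ and can legitimately be used as a \emph{definition} of the functor $RG$ up to isomorphism. I do not anticipate a genuine obstacle here: the only point requiring a little care is keeping track of which presheaf category each natural-transformation set lives in, and confirming that the direction of the adjunction ($L$ left adjoint, $R$ right adjoint) is being used correctly so that representables on the $\mathcal{C}$ side map under $L$ to the $\mathcal{D}$ side where $G$ lives. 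This is a routine chaining of two standard isomorphisms rather than a construction with hidden content.
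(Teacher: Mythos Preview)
Your argument is correct and matches the paper's proof essentially step for step: apply Yoneda to get $RG(C)\cong[\mathcal{C}\op,\Set](h^C,RG)$, then use the adjunction $L\dashv R$ to identify this with $[\mathcal{D}\op,\Set](L(h^C),G)$. Your additional remark about naturality in $C$ is a welcome elaboration but is not needed for the statement as given.
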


\begin{proof}
By the Yoneda lemma, $RG(C) \cong [\mathcal{C}\op,\Set](h^C,RG)$ 
and, by the adjunction, this is isomorphic to $[\mathcal{C}\op,\Set](L(h^C),G)$, 
the set of natural transformations from $L(\hom(-,C))$ to $G$. 
\end{proof}

Shortly, we shall apply this to characterize exponentials, with $L = (-)\times F$. 
In this case, we get that the value of $F\supset G$ at $C$ is the set of natural 
transformations from $\hom(-,C)\times F$ to $G$.

Products in the functor category $[\mathcal{W}\op,\Set]$ are given component-wise.  
For functors $F$ and $G$, we write $F \supset G$ for the exponential functor. 
This functor is defined as the right adjoint to $(-)\times F$, using the 
characterization given above:
\[
    (F \supset G)(\mathcal{B}, (X \col P)) = \mathop{Nat}(h^{(\mathcal{B}, (X : P))} \times F, G)
\]
where $\mathop{Nat}(\phi,\psi)$ denotes the set of natural transformations between the 
set-valued functors $\phi$ and $\psi$. 

We can now describe formally how disjunction is treated in terms of products and exponentials.  
Let $\mathcal{A}$ be the discrete category of atomic propositions (its set of objects is the set of atomic propositions and its only morphisms are identities). Define a functor 
$\Delta_{\mathcal{A}}: (\mathcal{W}^{op} \rightarrow \Set) \rightarrow ((\mathcal{W}^{op} \times \mathcal{A}) \rightarrow \Set)$ 
by mapping a functor $H:\mathcal{W}^{op} \rightarrow \Set$ to the functor 
$H': \mathcal{W}^{op} \times \mathcal{A} \rightarrow \Set$ such that 
$H' ((\mathcal{B}, (X\!:\!P)), p) = H((\mathcal{B}, (X\!:\!P)))$. Call the right adjoint 
to this functor $\forall_{\mathcal{A}}$. As a right adjoint, we can again 
characterize its values as natural transformations.  Specifically, if 
$K: (\mathcal{W}^{op} \times \mathcal{A}) \rightarrow \Set$, then 
$(\forall_{\mathcal{A}} K)(\mathcal{B}, (X\!:\!P))$ is the set of natural 
transformations from $\Delta_{\mathcal{A}}h^{(\mathcal{B}, (X : P))}$ 
to $K$. Since $\mathcal{A}$ is discrete, this amounts to giving a natural
transformation from $h^{(\mathcal{B}, (X : P))}$ to $K({-},p)$ for each 
atomic proposition $p$. 

Definition~\ref{def:int-func} defines the 
interpretation of formulae in presheaves. 
The key cases for our purposes are the base 
case, which captures the derivation of atoms in 
a base, the case for disjunction, as discussed at length elsewhere, and the case for falsity, 
which is interpreted as nullary disjunction.

\begin{definition}[Interpretation Functor] \label{def:int-func}   
Define a functor $\sem{\phi} \col  \mathcal{W}^{op} \rightarrow \Set$ by
induction over the structure of $\phi$ as follows:
\begin{itemize}
  \item[--] $\sem{p}(\mathcal{B}, (X \col  P))$ is the set of derivations
    $(X \col   P) \vdash_\mathcal{B} \Phi \col  p$. Any morphism
    $(\Phi_1, \ldots, \Phi_m)$ from $(\mathcal{B}, (X \col  P))$ to
    $(\mathcal{C}, (Y \col  Q))$ maps a derivation 
    $(Y\col  Q) \vdash_\mathcal{C} \Phi \col  p$, which is also a
    derivation $(Y\col  Q) \vdash_\mathcal{B} \Phi \col  p$, to the
    derivation $(X \col   P) \vdash_{\mathcal{B}} \Phi[\Phi_1/x_1,
    \ldots, \Phi_n/x_n] \col p$. 
    \item[--] $\sem{\phi \wedge \psi}$ is the product of the functors
      $\sem{\phi}$ and $\sem{\psi}$
\item[--] $\sem{\phi \supset \psi}$ is defined as $\sem{\phi} \supset \sem{\psi}$ 
\item[--] $\sem{\phi \vee \psi}$ is defined as follows: let $F = \sem\phi$, $G = \sem\psi$, and $K((\mathcal{B}, (X\!:\!P)), p) = (F \supset \sem{p}) \supset ((G \supset \sem{p}) \supset \sem{p})(\mathcal{B}, (X\!:\!P))$. This can be
extended to a functor $\mathcal{W}^{op} \times \mathcal{A} \rightarrow \Set$.  Then
$\sem{\phi \vee \psi}$ is defined as $\forall K$ 
\item[--] $\sem{\bot}$ is defined as follows: let $K((\mathcal{B}, (X\!:\!P)), p) = \sem{p}(\mathcal{B}, (X\!:\!P))$. This can be extended to a functor $\mathcal{W}^{op} \times \mathcal{A} \rightarrow \Set$.  Then
$\sem{\bot}$ is defined as $\forall_{\mathcal{A}} K$.
\end{itemize}
\end{definition} 

Note that in this interpretation, unlike in the basic interpretation of NJ proofs in 
bicartesian closed categories, disjunction does not correspond to coproduct in 
$[\mathcal{W}\op,\Set]$. We discuss this point in the sequel. 

In Section~\ref{sec:metatheory}, we establish soundness and completeness for our categorical 
formulation of Sandqvist's semantics. The proof of soundness uses the
existence of a natural transformation corresponding to $\Vdash$:
$\Gamma \Vdash \phi$ iff there exists a natural transformation from
$\sem{\Gamma}$ to $\sem\phi$. The proof of completeness uses a special
base, as in \cite{Sandqvist2015IL}, which is naturally extended via
$\sem{-}$ to the full consequence relation.

\begin{definition}[Categorical Base-extension Validity]\label{def:cat-base-val} 
We write $\Gamma \models \phi$ iff there is a natural transformation $\eta : \sem{\Gamma} \rightarrow \sem{\phi}$.    
\end{definition}

For each base $\mathcal{B}$ we show soundness and completeness
relative to this base by considering a suitable full subcategory.
Soundness and completeness then arise by considering the empty base.

For any base $\mathcal{B}$, we write
$\mathcal{W}_{\mathcal{B}}$ for the full subcategory of
$\mathcal{W}$ where the objects are pairs $(\mathcal{C}, (Y:Q))$ such
that $\mathcal{B} \subseteq \mathcal{C}$.
For any full subcategory
${\mathcal{W}'}$ of $\mathcal{W}$, we write $\semwp{-}$ for
  the functor $\sem{-}$ restricted to objects and morphisms in
  $\mathcal{W}'$.

 In Lemma~\ref{lem:algSoundInd}, we assume, without loss of generality, that all 
  $\phi^\flat$ in $\mathcal{N}_{\Delta^\flat}$ and the 
  $\flatOp{\phi_1} , \ldots , \flatOp{\phi_n}$ are distinct from the atoms in $\mathcal{B}$

\begin{lemma} \label{lem:algSoundInd}
    Let $\Gamma = \phi_1, \ldots, \phi_n$.
 Let $\mathcal{B'} =
  \mathcal{B} \cup \mathcal{N}_{\Delta^\flat} \cup \{\Rightarrow
  \flatOp{\phi_1}, \ldots, \Rightarrow \flatOp{\phi_n}\}$. 
Let $\mathcal{W'} = \mathcal{W}_{\mathcal{B}}$.
If  $P \vdash_{\mathcal{B'}} p$, then there exists a natural
transformation $\eta \col \semwp{\Gamma} \times \semwp{\natOp{P}}
\rightarrow \semwp{\natOp{p}}$.
\end{lemma}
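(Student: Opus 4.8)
The plan is to prove Lemma~\ref{lem:algSoundInd} by induction over the derivation of $P \vdash_{\mathcal{B'}} p$ using the inductive definition of consequence in a base (Definition~\ref{def:base}: the rules (Ref) and $(\mathrm{App}_\mathcal{R})$), building the required natural transformation $\eta \col \semwp{\Gamma} \times \semwp{\natOp{P}} \rightarrow \semwp{\natOp{p}}$ step by step. The base case, a use of (Ref), is handled by the appropriate projection: if $p \in P$ then $\natOp{p}$ occurs as a component of $\natOp{P}$, so the composite of the projection $\semwp{\Gamma} \times \semwp{\natOp{P}} \to \semwp{\natOp{p}}$ does the job, and this is evidently natural in the world $(\mathcal{C},(Y\col Q)) \in \mathcal{W}'$ since projections in the componentwise product of presheaves are natural.

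For the inductive step, a use of $(\mathrm{App}_\mathcal{R})$ for a rule $\mathcal{R} = ((P_1 \Rightarrow q_1), \ldots, (P_k \Rightarrow q_k)) \Rightarrow r$ of $\mathcal{B'}$ (so $r = p$), we get by the induction hypothesis natural transformations $\eta_i \col \semwp{\Gamma} \times \semwp{\natOp{P}} \times \semwp{\natOp{P_i}} \to \semwp{\natOp{q_i}}$ for each $i$, and we must assemble them into $\eta \col \semwp{\Gamma} \times \semwp{\natOp{P}} \to \semwp{\natOp{p}}$. This is where the work is, and it splits by the shape of $\mathcal{R}$: for the rules of $\mathcal{B}$ itself, naturality and composition of substitution (the two substitution lemmas, and Lemma~\ref{lemmaAssoc}) let us compose the $\eta_i$ directly at the level of derivations; for each rule of $\mathcal{N}_{\Delta^\flat}$ we must instead exhibit the corresponding categorical structure map, using that $\natOp{\phi^\flat} = \phi$ throughout. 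Concretely: $\supset$I becomes currying/the universal property of the exponential $\sem{\phi}\supset\sem{\psi}$; $\supset$E becomes evaluation; $\wedge$I becomes pairing into the product and $\wedge$E the projections; $\vee$I uses that $\sem{\phi\vee\psi} = \forall_\mathcal{A} K$ with $K = (\sem\phi\supset\sem{(-)})\supset((\sem\psi\supset\sem{(-)})\supset\sem{(-)})$, so an element of $\sem\phi$ (resp.\ $\sem\psi$) yields, via Lemma~\ref{lemma:right-adjoints-presheaves} and the discreteness of $\mathcal{A}$, the required natural family; $\vee$E unpacks the $\forall_\mathcal{A}$ at the atom $p$ and applies the two hypothetical derivations $\phi^\flat \Rightarrow p$, $\psi^\flat \Rightarrow p$ (which the IH delivers as maps $\semwp{\Gamma}\times\semwp{\natOp P}\times\sem\phi \to \sem p$, etc., i.e.\ as elements of the relevant exponentials), then evaluates; and $\bot$ uses $\sem\bot = \forall_\mathcal{A} K$ with $K((\mathcal{B},(X\col P)),p) = \sem p$, so an element of $\sem\bot$ projects to an element of $\sem p$ for the relevant atom.

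Throughout, the additional axioms $\{\Rightarrow \flatOp{\phi_1}, \ldots, \Rightarrow \flatOp{\phi_n}\}$ in $\mathcal{B'}$ are exactly what feeds the $\semwp{\Gamma}$ factor: a derivation using $\Rightarrow\flatOp{\phi_j}$ corresponds to the $j$-th projection $\semwp{\Gamma} \to \sem{\phi_j} = \sem{\natOp{\flatOp{\phi_j}}}$. One technical point requiring care is the restriction to the subcategory $\mathcal{W}' = \mathcal{W}_{\mathcal{B}}$: we need every construction (exponentials, the $\forall_\mathcal{A}$, the Yoneda calculations of Lemma~\ref{lemma:right-adjoints-presheaves}) to be performed relative to $\semwp{-}$, and to check that the natural transformations produced are natural for morphisms of $\mathcal{W}_{\mathcal{B}}$; since $\mathcal{W}_{\mathcal{B}}$ is a full subcategory and all the relevant adjoints restrict (presheaf categories over a full subcategory inherit the structure), this is routine but must be stated. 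The main obstacle is the $\vee$E case: matching the second-order/elimination-rule shape of the $\vee$E rule in $\mathcal{N}_{\Delta^\flat}$ against the definition $\sem{\phi\vee\psi} = \forall_\mathcal{A}K$, and verifying that the map obtained by instantiating the universally quantified atom at $p$ and then applying evaluation is genuinely natural in the world — this is precisely the categorical content of Sandqvist's treatment of disjunction, and is the step where the Yoneda-lemma characterization of $\forall_\mathcal{A}$ and of the exponentials must be used most carefully.
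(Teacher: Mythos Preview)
Your overall strategy is right and matches the paper's, but there is a genuine gap in the $\vee$E case (and analogously in the $\bot$ case). You propose to ``unpack the $\forall_\mathcal{A}$ at the atom $p$'' and evaluate, with the induction hypothesis delivering maps into $\sem{p}$. This does not type-check: the $p$ appearing in the $\vee$E rule of $\mathcal{N}_{\Delta^\flat}$ is an atom of the \emph{extended} language, so it may well be a fresh flattening atom $\chi^\flat$ for some compound $\chi$. In that case $\natOp{p}=\chi$, and both the induction hypothesis and the conclusion concern maps into $\semwp{\natOp{p}}=\semwp{\chi}$, not into $\semwp{p}$. You therefore cannot simply instantiate the quantifier in $\sem{\psi_1\vee\psi_2}=\forall_\mathcal{A}K$ at $p$ and evaluate: the $\forall_\mathcal{A}$ only lets you instantiate at genuine atoms and land in $\sem{q}$ for atomic $q$, whereas you need to land in $\semwp{\chi}$ for a possibly compound $\chi$.

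This is precisely why the paper, inside the $\vee$E case, runs a \emph{secondary} structural induction on $\natOp{p}$. The base case $\natOp{p}$ atomic is your argument; the cases $\natOp{p}=\chi_1\wedge\chi_2$ and $\natOp{p}=\chi_1\supset\chi_2$ reduce to the inner induction hypothesis via projections and currying; and the case $\natOp{p}=\chi_1\vee\chi_2$ is handled by building, for each atom $q$ and each pair $F_1\colon\sem{\chi_1}\supset\sem{q}$, $F_2\colon\sem{\chi_2}\supset\sem{q}$, auxiliary maps $\mu^{\psi_i}$ and then assembling the required element of $\sem{\chi_1\vee\chi_2}$. The paper is explicit that this extra induction is forced because $\sem{\phi\vee\psi}$ is not a coproduct, so no universal property gives you maps out of it into an arbitrary target; you only get maps into objects of the form $\semwp{\chi}$, and that has to be argued by induction on $\chi$. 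Your remark that the difficulty is ``verifying naturality'' misidentifies the obstacle: naturality is routine, the real issue is that the elimination target need not be atomic.
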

\begin{proof}
  The proof proceeds by induction over the structure of the derivation of $P \vdash_{\mathcal{B'}} p$
  structured by cases on the last rule used in the derivation. For the
  case of an $\vee$-elimination rule, we use again an induction over the structure of the
 $\phi$. Because the semantics of $\vee$ does not use a coproduct, we
 cannot use a universal property in this case and by using this
 induction consider in the
 proof only all objects which are in the range of the semantics $\sem{-}$.

  \medskip

  \noindent Rules in $\{\Rightarrow \flatOp{\phi_1}, \ldots, \Rightarrow \flatOp{\phi_n}\}$:
  \begin{itemize}[label=--,leftmargin=5mm,nosep]
      \item Suppose the last rule is $\Rightarrow  \flatOp{\phi}$. Then $\natOp{\flatOp{\phi}}=\phi$ and the
        projection from $\semwp{\Gamma} \times \semwp{\natOp{P}}$ to
          $\semwp{\phi}$ yields the claim.
  \end{itemize}
  \noindent Rules in $\mathcal{B}$:
  \begin{itemize}[label=--,leftmargin=5mm,nosep]
    \item Suppose the derivation is $P,p \vdash_{\mathcal{B'}} p$.
        The projection from $\semwp{\natOp{P, p}}$ to
        $\semwp{\natOp{p}}$ has the desired properties.
      \item
        Suppose the last rule of the derivation is an application of
        the rule $\mathcal{R} = ((P_1 \Rightarrow p_1), \ldots, (P_n \Rightarrow
        p_n))\Rightarrow p$. By the induction hypothesis, there are natural
        transformations $\eta_i$ from $\semwp{\Gamma} \times \semwp{\natOp{P}, P_i}$ to
        $\semwp{p_i}$ . Hence there are
        also natural transformations $\eta_i'$ from
        $\semwp{\Gamma} \times \semwp{\natOp{P}}$ to $\semwp{P_i
          \supset p_i}$. Now consider any object of ${\mathcal{W}'}$. This is a pair $(\mathcal{C},(Y\col Q))$ 
        such that $\mathcal{B} \subseteq \mathcal{C}$. 
        Let $f$ be an element  of $\semwp{\Gamma}(\mathcal{C}, (Y \col
        Q))$
        and let $g$ be an element of $\semwp{\natOp{P}}(\mathcal{C}, (Y \col
        Q))$. Hence
          ${\eta_i}'_{(\mathcal{C}, (Y \col\, Q))}(f, g)$ is a natural transformation
          $\kappa_i$ from $Hom(-, (\mathcal{C}, (Y \col\, Q)) \times \semwp{P_i}$
            to $\semwp{p_i}$.  Let $\pi_i$ the projection from
            $(Y \col Q), (Z_i\col P_i)$
            to $P_i$. By definition, $\kappa_i(Id, \pi_i)$ is a
            derivation $\Phi_i \col (Y \col Q),(Z \col P_i), \vdash_{\mathcal{C}}
              p_i$. Hence $\Phi_{\mathcal{R}}(\Phi_1, \ldots, \Phi_n)$
                is a derivation of $Y \col Q \vdash_{\mathcal{C}} p$.
  \end{itemize}
  \noindent Rules in $\mathcal{N}_{\Delta^\flat}$:
  \begin{itemize}[label=--,leftmargin=5mm,nosep]
          \item Suppose the last rule is $\flatOp{\psi_1}, \flatOp{\psi_2}
            \Rightarrow \flatOp{(\psi_1 \wedge \psi_2)}$. By the induction
            hypothesis, there are two natural transformations $\eta_1$
            and $\eta_2$, where $\eta_i$ is a natural transformation 
            from $\semwp{\Gamma}\times \semwp{\natOp{P}}$ to
          $\semwp{\psi_i}$. Hence $\produ{\eta_1,
            \eta_2}$ is a natural transformation from
          $\semwp{\Gamma}\times \semwp{\natOp{P}}$ to
          $\semwp{\psi_1 \wedge \psi_2}$.
\item Suppose the last rule is $\flatOp{(\psi_1 \wedge \psi_2)} \Rightarrow
  \flatOp{\psi_1}$. By the induction hypothesis, there exists a natural transformation
  $\eta$ from $\semwp{\Gamma} \times \semwp{\natOp{P}}$ to
          $\semwp{\psi_1 \wedge \psi_2}$. Now compose
          this natural transformation with the projection from $\semwp{\psi_1 \wedge
            \psi_2}$ to $\semwp{\psi_1}$.
\item Suppose the last rule is $(\flatOp{\psi_1} \Rightarrow
  \flatOp{\psi_2}) \Rightarrow \flatOp{(\psi_1 \supset\psi_2)}$. By
  assumption, there is a natural transformation from
  $\semwp{\Gamma}\times \semwp{\natOp{P}} \times
  \semwp{\psi_1}$ to $\semwp{\psi_2}$. Hence there is also a natural
  transformation from $\semwp{\Gamma} \times \semwp{\natOp{P}}$ to
$\semwp{\psi_1 \supset \psi_2}$.
  \item Suppose the last rule is $\flatOp{\psi_1}, \flatOp{(\psi_1 \supset
      \psi_2)} \Rightarrow \flatOp{\psi_2}$. By assumption, we have
    natural transformations 
    $\eta_1$ and $\eta_2$ from $\semwp{\Gamma}\times
    \sem{\natOp{P}}$ to $\semwp{\psi_1}$ and $\semwp{\psi_1 \supset
    \psi_2}$ respectively. By the definition of function spaces for
  functor categories, there exists also a natural transformation from
  $\semwp{\Gamma}\times \sem{\natOp{P}}$ to $\semwp{\psi_2}$.
          \item Suppose the last rule is $\flatOp{\psi_1} \Rightarrow
            \flatOp{(\psi_1 \vee \psi_2)}$. By definition of the
            interpretation of disjunction there is a
            natural transformation $\kappa$ from $\semwp{\psi_1}$ to
            $\semwp{\psi_1 \vee \psi_2}$.
            By the induction hypothesis, there
            is a natural transformation $\eta$ from $\semwp{\Gamma}
            \times \semwp{\natOp{P}}$ to $\semwp{\psi_1}$. $\kappa \circ
            \eta$ is therefore a natural transformation from  $\semwp{\Gamma}\times \sem{\natOp{P}}$ to
            $\semwp{\psi_1\vee\psi_2}$.
\item
  Suppose the last rule is $\flatOp{(\psi_1 \vee \psi_2)}, (\flatOp{\psi_1}
  \Rightarrow p), (\flatOp{\psi_2} \Rightarrow p) \Rightarrow
  p$. By the induction hypothesis, there are natural transformations
  $\eta_{\psi_1 \vee \psi_2} $ from $\sem{\Gamma}\times \semwp{\natOp{P}}$ to
  $\semwp{\psi_1\vee\psi_2}$, $\eta_{\psi_1}$ from $\semwp{\Gamma}\times \sem{\natOp{P}} \times
  \semwp{\psi_1}$ to $\semwp{\natOp{p}}$ and
  $\eta_{\psi_2}$ from $\semwp{\Gamma}\times \semwp{\natOp{P}}\times
  \semwp{\psi_2}$ to $\semwp{\natOp{p}}$.  
Now we use an induction over the structure of $\natOp{p}$.
\begin{itemize}
\item $\natOp{p} = p$:      By the
  definition of the interpretation of disjunction there is
  also a natural transformation from $\semwp{\psi_1 \vee \psi_2}  \times \semwp{\psi_1 \supset p}\times \semwp{\psi_2
    \supset p}$ to $\semwp{p}$. Hence
  there is also a natural transformation from $\semwp{\Gamma}\times
  \semwp{\natOp{P}}$ to   $\semwp{p}$. 
\item $\natOp{p} = \phi_1 \wedge \phi_2$:
We also have $P, \flatOp{\psi_1} \vdash_{\mathcal{B'}} \flatOp{\phi_i}$ and $P
, \flatOp{\psi_2} \vdash_{\mathcal{B'}} \flatOp{\phi_i}$ for $i = 1,
2$. By the induction hypothesis, there 
are natural transformations $\eta_i$ from $\semwp{\Gamma}\times
  \sem{\natOp{P}}$ to   $\semwp{\phi_i}$. The natural transformation
  $\produ{\eta_1, \eta_2}$ is a natural transformation from
  $\semwp{\Gamma}\times \sem{\natOp{P}}$ to  $\semwp{\phi_1 \wedge
    \phi_2}$.
\item $\natOp{p} = \phi_1 \supset \phi_2$:
We also have $P, \flatOp{\phi_1}, \flatOp{\psi_i}  \vdash_{\mathcal{B'}}
\flatOp{\phi_2}$  and $P, \flatOp{\phi_1} \vdash_{\mathcal{B'}}
\flatOp{(\psi_1 \vee \psi_2)}$. By the induction hypothesis, there is a
natural transformation from $\semwp{\Gamma}\times \semwp{\natOp{P}}
\times \semwp{\phi_1}$ to $\semwp{\phi_2}$. Hence there is also a
natural transformation from $\semwp{\Gamma}\times \semwp{\natOp{P}}$ to
$\semwp{\phi_1 \supset \phi_2}$.
\item $\natOp{p} = \phi_1 \vee \phi_2$: 
      For any atom $q$ and any $F_1 \colon \semwp{\phi_1} \supset
      \semwp{q}$, $F_2 \colon \semwp{\phi_2} \supset
      \semwp{q}$, we define a natural transformation $\mu^{\psi_1} \colon
      \semwp{\Gamma} \times \semwp{\natOp{P}} \times \semwp{\psi_1} \rightarrow \semwp{q}$ by
      $\mu^{\psi_1}(\gamma, f, t) = \eta_{\psi_1}(\gamma, f, t)pF_1F_2$. We
      similarly define a natural transformation $\mu^{\psi_2} \colon
   \semwp{\Gamma} \times \semwp{\natOp{P}} \times \semwp{\psi_2} \rightarrow \semwp{q}$ by
      $\mu^{\psi_2}(\gamma, f, t) = \eta_{\psi_2}(\gamma, f, t)pF_1F_2$. 
      Now we define, using an informal $\lambda$-calculus notation a natural transformation $\eta \colon
      \semwp{\Gamma} \times \semwp{\natOp{P}}
      \rightarrow \semwp{\phi_1 \vee \phi_2}$ by
      \[
      \begin{array}{lcl}
      \eta(\gamma, f) & = & \lambda q \,.\, \lambda F_1 \colon\sem{\phi_1} \supset
      \semwp{q} \,.\, \lambda F_2 \colon \sem{\phi_2} \supset
      \semwp{q} \,.\, \\ 
      & & \eta_{\psi_1 \vee \psi_2}(\gamma,f) p
      (\mathop{Cur}(\mu^\phi)(\gamma, f))(\mathop{Cur}(\mu^\psi)(\gamma,f))
      \end{array}
      \]
\end{itemize}
    \end{itemize}
\end{proof}

\begin{lemma}[Algebraic Soundness]\label{lem:alg-sound}
  Suppose $\Gamma \Vdash_{\mathcal{B}} \phi$. Let $\mathcal{W}'$ be the category
  $\mathcal{W}_{\mathcal{B}}$. Then there exists a natural
  transformation $\eta_{\mathcal{B}} \col \semwp{\Gamma} \rightarrow \semwp{\phi}$.
\end{lemma}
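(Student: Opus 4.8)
The plan is to mirror, at the level of bases, Sandqvist's completeness argument and then hand the resulting atomic derivability to Lemma~\ref{lem:algSoundInd}. Write $\Gamma = \phi_1, \ldots, \phi_n$ (I treat the degenerate case $\Gamma = \emptyset$ at the end). Let $\Delta$ consist of all of $\Gamma \cup \{\phi\}$ together with all of its subformulae, fix the flattening $\flatOp{-}$ of Definition~\ref{def:flatnat} so that the fresh atoms $\flatOp{\delta}$ are disjoint from the atoms occurring in $\mathcal{B}$, and put
\[
  \mathcal{B'} \;=\; \mathcal{B} \cup \mathcal{N}_{\Delta^\flat} \cup \{\,\Rightarrow \flatOp{\phi_1}, \ldots, \Rightarrow \flatOp{\phi_n}\,\},
\]
which is exactly the base appearing in Lemma~\ref{lem:algSoundInd}, with $\mathcal{W}' = \mathcal{W}_{\mathcal{B}}$ as there and in the present statement.

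First I would extract from the hypothesis an atomic derivability in $\mathcal{B'}$. For each $i$, the axiom $\Rightarrow \flatOp{\phi_i}$ gives $\vdash_{\mathcal{B'}} \flatOp{\phi_i}$, hence $\Vdash_{\mathcal{B'}} \flatOp{\phi_i}$ by clause (At), and therefore $\Vdash_{\mathcal{B'}} \phi_i$ by Sandqvist's flattening simulation \cite{Sandqvist2015IL} --- the direction stating that, for any base extending $\mathcal{N}_{\Delta^\flat}$ and any $\delta \in \Delta$, provability of $\flatOp{\delta}$ entails validity of $\delta$. Since $\mathcal{B'} \supseteq \mathcal{B}$, unfolding $\Gamma \Vdash_{\mathcal{B}} \phi$ via clause (Inf) and instantiating at $\mathcal{C} = \mathcal{B'}$ now yields $\Vdash_{\mathcal{B'}} \phi$; and the converse direction of the simulation (validity of $\delta \in \Delta$ entails provability of $\flatOp{\delta}$) gives $\vdash_{\mathcal{B'}} \flatOp{\phi}$, i.e. $\emptyset \vdash_{\mathcal{B'}} \flatOp{\phi}$.

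Now I would apply Lemma~\ref{lem:algSoundInd} with $P = \emptyset$ and $p = \flatOp{\phi}$: it provides a natural transformation $\semwp{\Gamma} \times \semwp{\natOp{P}} \rightarrow \semwp{\natOp{\flatOp{\phi}}}$ over $\mathcal{W}' = \mathcal{W}_{\mathcal{B}}$, and, since $P = \emptyset$, its left factor $\semwp{\natOp{P}}$ is the terminal presheaf $\mathbf{1}$. Since $\phi \in \Delta$ we have $\natOp{\flatOp{\phi}} = \phi$, so precomposing with the canonical isomorphism $\semwp{\Gamma} \cong \semwp{\Gamma} \times \mathbf{1}$ yields the required $\eta_{\mathcal{B}} \col \semwp{\Gamma} \rightarrow \semwp{\phi}$. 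The case $\Gamma = \emptyset$ is identical, with $\semwp{\Gamma}$ itself the terminal presheaf and with $\Vdash_{\mathcal{B}} \phi$ transported to $\Vdash_{\mathcal{B'}} \phi$ by monotonicity of $\Vdash$ under base extension in place of the appeal to (Inf).

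The genuine content has already been done inside Lemma~\ref{lem:algSoundInd}; what remains here is bookkeeping, and the one point needing care is the interface with Sandqvist's simulation. One must check that the simulation applies to $\mathcal{B'}$, which carries not just $\mathcal{N}_{\Delta^\flat}$ but also the rules of $\mathcal{B}$ and the axioms $\Rightarrow \flatOp{\phi_i}$: this is legitimate precisely because the flat atoms are chosen disjoint from the atoms of $\mathcal{B}$, so no rule of $\mathcal{B}$ mentions any $\flatOp{\delta}$ and the derivability and validity behaviour of the $\flatOp{\delta}$'s is governed solely by $\mathcal{N}_{\Delta^\flat}$ and the added axioms --- the ``without loss of generality'' convention recorded just before Lemma~\ref{lem:algSoundInd}. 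I expect this compatibility check, rather than any categorical difficulty, to be the main obstacle.
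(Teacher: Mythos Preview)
Your proposal is correct and follows essentially the same route as the paper: define $\mathcal{B}' = \mathcal{B} \cup \mathcal{N}_{\Delta^\flat} \cup \{\Rightarrow \flatOp{\phi_1},\ldots,\Rightarrow \flatOp{\phi_n}\}$, invoke Sandqvist's flattening simulation to pass from the axioms $\Rightarrow \flatOp{\phi_i}$ to $\Vdash_{\mathcal{B}'}\Gamma$, use the hypothesis to get $\Vdash_{\mathcal{B}'}\phi$, apply the simulation in the other direction to obtain $\vdash_{\mathcal{B}'}\flatOp{\phi}$, and then hand this to Lemma~\ref{lem:algSoundInd} with $P=\emptyset$. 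You have in fact made explicit several bookkeeping steps (the use of clause (At), $\natOp{\flatOp{\phi}}=\phi$, the terminal presheaf for $P=\emptyset$, and the freshness of the flat atoms) that the paper leaves implicit.
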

\begin{proof}
  Let $\Gamma = \phi_1, \ldots, \phi_n$.
 Let $\mathcal{B'} =
  \mathcal{B} \cup \mathcal{N}_{\Delta^\flat} \cup \{\Rightarrow
  \flatOp{\phi_1}, \ldots, \Rightarrow \flatOp{\phi_n}\}$.
Using the proof of Sandqvist's theorem~5.1, we have
$\Vdash_{\mathcal{B'}} \Gamma$. Hence we have by assumption also
  $\Vdash_{\mathcal{B'}} \phi$, and, using the proof of Sandqvist's
    theorem~5.1 again, we have $\Vdash_{\mathcal{B'}} \flatOp{\phi}$.
Lemma~\ref{lem:algSoundInd} now yields the claim.
    \end{proof}

    Now we turn to completeness, which is formulated as the converse of soundness. 
    
 
\begin{lemma}[Algebraic Completeness] \label{lem:alg-complete}
Consider any base $\mathcal{B}$. Let $\mathcal{W}'$ be the category
  $\mathcal{W}_{\mathcal{B}}$.  If there exists a natural
  transformation $\eta_{\mathcal{B}} \col \semwp{\Gamma} \rightarrow \semwp{\phi}$,
  then $\Gamma \Vdash_{\mathcal{B}} \phi$.

\end{lemma}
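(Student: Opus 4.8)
The plan is to derive the lemma from two facts: algebraic soundness (Lemma~\ref{lem:alg-sound}), which is already established and can be used to manufacture elements of interpretation functors from base-extension validities; and an \emph{evaluation lemma} going the other way, namely: for every formula $\phi$ and every base $\mathcal{C}$, if $\sem{\phi}(\mathcal{C},\emptyset)$ is non-empty, then $\Vdash_{\mathcal{C}}\phi$. Here $(\mathcal{C},\emptyset)$ is the object of $\mathcal{W}$ with base $\mathcal{C}$ and empty context; it is terminal in the full subcategory $\mathcal{W}_{\mathcal{C}}$, so that for any $\mathcal{D}\supseteq\mathcal{C}$ the set $h^{(\mathcal{C},\emptyset)}(\mathcal{D},\emptyset)$ is a singleton. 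This is what licenses the repeated move below of \emph{evaluating an exponential at a point}: from an element of $(F\supset G)(\mathcal{C},\emptyset)$, regarded via Lemma~\ref{lemma:right-adjoints-presheaves} as a natural transformation $h^{(\mathcal{C},\emptyset)}\times F\to G$, together with an element of $F(\mathcal{D},\emptyset)$, one reads off an element of $G(\mathcal{D},\emptyset)$.

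I would prove the evaluation lemma by induction on $\phi$, with the statement quantified over all bases $\mathcal{C}$ (so that the induction hypothesis is available at every base). For $\phi=p$ an atom, a non-empty $\sem{p}(\mathcal{C},\emptyset)$ contains a derivation $\emptyset\vdash_{\mathcal{C}}\Phi\col p$, hence $\vdash_{\mathcal{C}}p$, hence $\Vdash_{\mathcal{C}}p$ by clause (At) of Definition~\ref{def:validity-in-a-base}. For $\phi=\chi_1\wedge\chi_2$, the product clause in Definition~\ref{def:int-func} makes both $\sem{\chi_i}(\mathcal{C},\emptyset)$ non-empty, and the induction hypothesis with clause $(\wedge)$ concludes. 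For $\phi=\bot$, an element of $\sem{\bot}(\mathcal{C},\emptyset)=\forall_{\mathcal{A}}K$ with $K({-},p)=\sem{p}$ is, by discreteness of $\mathcal{A}$ and the Yoneda lemma, an element of $\sem{p}(\mathcal{C},\emptyset)$ for every atom $p$, that is $\vdash_{\mathcal{C}}p$ for every $p$; clause $(\bot)$ then gives $\Vdash_{\mathcal{C}}\bot$. For $\phi=\chi_1\supset\chi_2$, to establish $\chi_1\Vdash_{\mathcal{C}}\chi_2$ (which is $\Vdash_{\mathcal{C}}\chi_1\supset\chi_2$ by clause $(\supset)$) I fix $\mathcal{D}\supseteq\mathcal{C}$ with $\Vdash_{\mathcal{D}}\chi_1$, use Lemma~\ref{lem:alg-sound} at base $\mathcal{D}$ with empty antecedent to obtain an element of $\sem{\chi_1}(\mathcal{D},\emptyset)$, evaluate the given element of $\sem{\chi_1\supset\chi_2}(\mathcal{C},\emptyset)$ at $(\mathcal{D},\emptyset)$ on it to obtain an element of $\sem{\chi_2}(\mathcal{D},\emptyset)$, and apply the induction hypothesis at base $\mathcal{D}$ to conclude $\Vdash_{\mathcal{D}}\chi_2$.

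The disjunction case is where the real work lies, and I expect it to be the main obstacle: since $\sem{\chi_1\vee\chi_2}$ is \emph{not} a coproduct in $[\mathcal{W}\op,\Set]$ there is no universal property to exploit, and one must unwind $\sem{\chi_1\vee\chi_2}=\forall_{\mathcal{A}}K$ with $K({-},p)=(\sem{\chi_1}\supset\sem{p})\supset((\sem{\chi_2}\supset\sem{p})\supset\sem{p})$ explicitly. Given an element of $\sem{\chi_1\vee\chi_2}(\mathcal{C},\emptyset)$, discreteness of $\mathcal{A}$ and Yoneda provide, for each atom $p$, an element $c_p$ of $K((\mathcal{C},\emptyset),p)=\sem{(\chi_1\supset p)\supset((\chi_2\supset p)\supset p)}(\mathcal{C},\emptyset)$. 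To check clause $(\vee)$, fix an atom $p$ and $\mathcal{D}\supseteq\mathcal{C}$ with $\chi_1\Vdash_{\mathcal{D}}p$ and $\chi_2\Vdash_{\mathcal{D}}p$, so $\Vdash_{\mathcal{D}}\chi_1\supset p$ and $\Vdash_{\mathcal{D}}\chi_2\supset p$; Lemma~\ref{lem:alg-sound} at base $\mathcal{D}$ yields $f\in\sem{\chi_1\supset p}(\mathcal{D},\emptyset)$ and $g\in\sem{\chi_2\supset p}(\mathcal{D},\emptyset)$; evaluating $c_p$ at $(\mathcal{D},\emptyset)$ on $f$ gives an element of $\sem{(\chi_2\supset p)\supset p}(\mathcal{D},\emptyset)$, and evaluating that at $(\mathcal{D},\emptyset)$ on $g$ gives an element of $\sem{p}(\mathcal{D},\emptyset)$ --- a derivation of $p$ in $\mathcal{D}$, so $\Vdash_{\mathcal{D}}p$ by (At). Since $p$ and $\mathcal{D}$ were arbitrary, clause $(\vee)$ gives $\Vdash_{\mathcal{C}}\chi_1\vee\chi_2$. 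This is the semantic counterpart, run in reverse, of the auxiliary induction on $\natOp{p}$ used in the proof of Lemma~\ref{lem:algSoundInd}.

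It remains to assemble the lemma. If $\Gamma=\emptyset$, then $\semwp{\Gamma}$ is the terminal presheaf of $\mathcal{W}_{\mathcal{B}}$, so evaluating $\eta_{\mathcal{B}}$ at $(\mathcal{B},\emptyset)$ produces an element of $\sem{\phi}(\mathcal{B},\emptyset)$, whence $\Vdash_{\mathcal{B}}\phi$ by the evaluation lemma; this is $\Gamma\Vdash_{\mathcal{B}}\phi$. If $\Gamma=\phi_1,\dots,\phi_n$ with $n\geq 1$, then by clause (Inf) it suffices to fix a base $\mathcal{C}\supseteq\mathcal{B}$ with $\Vdash_{\mathcal{C}}\phi_i$ for each $i$ and derive $\Vdash_{\mathcal{C}}\phi$: Lemma~\ref{lem:alg-sound} at base $\mathcal{C}$ gives $e_i\in\sem{\phi_i}(\mathcal{C},\emptyset)$ for each $i$, so $(e_1,\dots,e_n)$ is an element of $\semwp{\Gamma}(\mathcal{C},\emptyset)$ (note $(\mathcal{C},\emptyset)$ is an object of $\mathcal{W}_{\mathcal{B}}$ since $\mathcal{B}\subseteq\mathcal{C}$), $\eta_{\mathcal{B}}$ sends it to an element of $\sem{\phi}(\mathcal{C},\emptyset)$, and the evaluation lemma gives $\Vdash_{\mathcal{C}}\phi$. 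Throughout, the only genuinely delicate point is the disjunction clause of the evaluation lemma, where the absence of a universal property for $\sem{\vee}$ forces the explicit computation sketched above.
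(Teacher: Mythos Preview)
Your proof is correct, but it is organized differently from the paper's. The paper argues by a simultaneous structural induction on \emph{both} $\Gamma$ and $\phi$: there is a case for each outermost connective of $\phi$ and, in parallel, a case for each outermost connective of a formula in $\Gamma$; when $\Gamma$ contains a disjunction, a further nested induction on $\phi$ is needed. You instead factor the argument into (i) a single-variable \emph{evaluation lemma} (induction on $\phi$ alone, at objects of the form $(\mathcal{C},\emptyset)$) and (ii) a uniform assembly step that handles $\Gamma$ in one stroke by invoking algebraic soundness to manufacture an element of $\semwp{\Gamma}(\mathcal{C},\emptyset)$ and then pushing it through $\eta_{\mathcal{B}}$. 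Both routes appeal to Lemma~\ref{lem:alg-sound} (the paper does so in the $\Gamma=\Delta,\phi_1\!\supset\!\phi_2$ and $\Gamma=\Delta,\phi_1\!\vee\!\phi_2$ cases), so neither is more elementary in that respect; your decomposition is arguably cleaner because it avoids the simultaneous induction and the nested one, treating $\Gamma$ uniformly rather than connective-by-connective. The trade-off is that you lean on algebraic soundness inside the evaluation lemma itself (in the $\supset$ and $\vee$ clauses), whereas the paper keeps the $\phi$-side cases purely structural and only invokes soundness on the $\Gamma$-side. One small point worth making explicit in your write-up: the identification $\semwp{\psi}(\mathcal{D},\emptyset)=\sem{\psi}(\mathcal{D},\emptyset)$, which you use when importing the output of Lemma~\ref{lem:alg-sound} at base $\mathcal{D}$ into the exponential computed over $\mathcal{W}_{\mathcal{C}}$, holds because $\mathcal{W}_{\mathcal{C}}$ is closed under maps into its objects (any $(\mathcal{E},R)$ with a morphism to $(\mathcal{D},\emptyset)$ has $\mathcal{C}\subseteq\mathcal{D}\subseteq\mathcal{E}$), so the representable $h^{(\mathcal{D},\emptyset)}$ is supported on $\mathcal{W}_{\mathcal{C}}$ and the relevant exponentials and $\forall_{\mathcal{A}}$ agree.
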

\begin{proof}
The proof proceeds by induction over the structure of $\Gamma$ and $\phi$. 
\begin{description}[leftmargin=5mm,nosep]
\item[$\Gamma = P, \phi = p$] Let $id$ be the identity on $P$. By
  definition, $\eta_{\mathcal{B},(\mathcal{B}, (X:P)) }(id)$ is a derivation --- i.e., a proof 
according to Definition~\ref{def:validity-in-a-base} --- $P \Vdash_{\mathcal{B}} p$. 
\item[$\phi = \phi_1 \wedge \phi_2$] By definition, there exists
  natural transformations $\eta_i \colon \sem{\Gamma}^{\mathcal{W'}}
  \rightarrow \sem{\phi_i}^{\mathcal{W'}}$, and hence, by the induction hypothesis, we have $\Gamma \Vdash_{\mathcal{B}} \phi_i$ and therefore $\Gamma \Vdash_{\mathcal{B}} \phi_1 \wedge \phi_2$.
\item[$\Gamma = \Delta, \phi_1 \wedge \phi_2$] By the induction hypothesis, there exists a derivation $\Delta, \phi_1, \phi_2 \Vdash_{\mathcal{B}} \phi$, which is the required derivation, as $\sem{\Delta, \phi_1 \wedge \phi_2} = \sem{\Delta, \phi_1, \phi_2}$.
\item[$\phi = \phi_1 \supset \phi_2$] By the definition of $\supset$
  as a right adjoint, there exists a natural transformation  $\eta' \colon \sem{\Gamma, \phi_1}^{\mathcal{W'}} \rightarrow \sem{\phi}^{\mathcal{W'}}$. By the induction hypothesis, there exists a derivation $\Gamma, \phi_1\Vdash_{\mathcal{B}} \phi_2$, and therefore also a derivation  $\Gamma \Vdash_{\mathcal{B}} \phi_1 \supset \phi_2$.
\item[$\Gamma = \Delta, \phi_1 \supset \phi_2$] Suppose
  $\Vdash_{\mathcal{C}} \Gamma$. By algebraic soundness, for all
  $\mathcal{D}$ such that $\mathcal{C} \subseteq \mathcal{D}$ and $P$ there are
  elements $\delta$ of $\sem{\Delta}(\mathcal{D}, (X \col P))$ and $f$ of
  $\sem{\phi_1 \supset \phi_2}(\mathcal{D},
  (X \col P))$. $\eta_{\mathcal{B}, (\mathcal{D}, (X:P))}(\delta, f)$ is an element of
  $\sem{\phi}(\mathcal{D}, (X\col P))$. This assignment is natural in
  $(\mathcal{D}, (X:P))$. By the induction hypothesis, we have
  $\Vdash_{\mathcal{C}} \phi$.
\item[$\phi = \phi_1 \vee \phi_2$]  By the definition of $\sem{-}$,
  for all atoms $p$ there exists a natural transformation
  $\eta_{\mathcal{B}}$ from $\sem{\Gamma}^{\mathcal{W'}}$ to
  $\sem{\phi_1 \vee \phi_2}^{\mathcal{W'}}$ iff for all atoms $p$
  there exists a natural transformation $\eta_{p, \mathcal{B}}$ from
  $\sem{\Gamma}^{\mathcal{W}'}$ to $\sem{(\phi_1  \supset p) \supset
    (\phi_2 \supset p) \supset p}^{\mathcal{W}'}$. Hence the induction
  hypothesis yields $\Gamma \Vdash _{\mathcal{B}} (\phi_1 \supset p)
  \supset (\phi_2 \supset p) \supset p $. Hence, by definition,
  $\Gamma \Vdash_{\mathcal{B}} \phi_1 \vee \phi_2$.
\item[$\Gamma = \Delta, \phi_1 \vee \phi_2$]
By induction over the structure of $\phi$. 
\begin{description}[leftmargin=5mm,nosep]
\item[$\phi = p$] 
Suppose $\Vdash_{\mathcal{B}} \Gamma$.  By algebraic soundness, there
exists a natural transformation $\eta': \sem{\top}^{\mathcal{W'}}
\rightarrow \sem{\Gamma}^{\mathcal{W'}}$.  By definition, 
$\eta_{\mathcal{B}} \circ \eta'$ is a natural tranformation from $\sem{\top}^{\mathcal{W'}} $ to $\sem{p}^{\mathcal{W}'}$. As $\sem{\top}(\mathcal{B}, (-:-)) $ is non-empty, $\sem{p}(\mathcal{B}, (-:-))$ is non-empty as well, and hence by the definition of $\sem{-}$, there exists a derivation $\Vdash_{\mathcal{B}} p$.
\item [$\phi = \psi_1 \wedge \psi_2$] By definition, there exists
  natural transformations $\eta_1$ and $\eta_2$ such that $\eta = \produ{\eta_1, \eta_2}$. By the induction hypothesis, there are derivations $\Gamma \Vdash_{\mathcal{B}} \psi_1$ and $\Gamma \Vdash_{\mathcal{B}} \psi_2$. Therefore we also have $\Gamma \Vdash_{\mathcal{B}} \psi_1 \wedge \psi_2$.
\item[$\phi = \psi_1 \supset \psi_2$] By definition, there exists also
  a natural transformation $\eta'_{\mathcal{B}}$  from $\sem{\Gamma, \psi_1}^{\mathcal{W}'}$ to $\sem{\psi_2}^{\mathcal{W}'}$. By the induction hypothesis, there exists a derivation $\Gamma, \psi_1 \Vdash_{\mathcal{B}} \psi_2$. Hence there is also a derivation $\Gamma \Vdash_{\mathcal{B}} \psi_1 \supset \psi_2$.
\item[$\phi = \psi_1 \vee\psi_2$] Consider any atom $p$. By definition
  of $\sem{-}$, there exists a natural transformation $\eta'$ from $\sem{\Gamma}^{\mathcal{W}'}$ to $\sem{(\psi_1 \supset p) \supset (\psi_2 \supset p) \supset p}^{\mathcal{W}'}$. By the induction hypothesis, there is a derivation  $\Gamma \Vdash_{\mathcal{B}} (\psi_1 \supset p) \supset (\psi_2 \supset p) \supset p$. Hence there is also a derivation $\Gamma \Vdash_{\mathcal{B}} \psi_1 \vee\psi_2$.
 \end{description}
\end{description}
\end{proof}

\begin{proposition}[Equivalence] \label{prop:equiv} 
$\Gamma \Vdash \phi$ iff $\Gamma \models \phi$. 
\end{proposition}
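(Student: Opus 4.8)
The plan is to derive Proposition~\ref{prop:equiv} by instantiating the ``relative'' soundness and completeness lemmas (Lemma~\ref{lem:alg-sound} and Lemma~\ref{lem:alg-complete}) at the empty base, and then matching the resulting statements with the definitions of $\Vdash$ and $\models$. Recall from Definition~\ref{def:validity} that $\Gamma \Vdash \phi$ means: for every base $\mathcal{B}$, if $\Vdash_{\mathcal{B}} \psi$ for all $\psi \in \Gamma$, then $\Vdash_{\mathcal{B}} \phi$; equivalently, using clause (Inf) of Definition~\ref{def:validity-in-a-base}, this is exactly $\Gamma \Vdash_{\emptyset} \phi$ when $\Gamma \neq \emptyset$, and $\Vdash_{\emptyset} \phi$ in the degenerate case. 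On the other side, Definition~\ref{def:cat-base-val} says $\Gamma \models \phi$ iff there is a natural transformation $\eta : \sem{\Gamma} \to \sem{\phi}$ in the full functor category $[\mathcal{W}^{op}, \Set]$; and with $\mathcal{B} = \emptyset$ we have $\mathcal{W}_{\emptyset} = \mathcal{W}$, so that $\semwp{-} = \sem{-}$ for $\mathcal{W}' = \mathcal{W}_{\emptyset}$.

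First I would handle the forward direction. Assume $\Gamma \Vdash \phi$. By the remark above, $\Gamma \Vdash_{\emptyset} \phi$. Apply Lemma~\ref{lem:alg-sound} with $\mathcal{B} = \emptyset$: since $\mathcal{W}' = \mathcal{W}_{\emptyset} = \mathcal{W}$, we obtain a natural transformation $\eta_{\emptyset} : \sem{\Gamma} \to \sem{\phi}$, which by Definition~\ref{def:cat-base-val} gives $\Gamma \models \phi$. Conversely, assume $\Gamma \models \phi$, so there is a natural transformation $\eta : \sem{\Gamma} \to \sem{\phi}$; this is precisely $\eta_{\emptyset} : \semwp{\Gamma} \to \semwp{\phi}$ for $\mathcal{W}' = \mathcal{W}_{\emptyset}$. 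Apply Lemma~\ref{lem:alg-complete} with $\mathcal{B} = \emptyset$ to conclude $\Gamma \Vdash_{\emptyset} \phi$, hence $\Gamma \Vdash \phi$.

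The only genuine content beyond bookkeeping is the reconciliation of the two shapes of the validity judgement: $\Gamma \Vdash \phi$ as quantified over all bases (Definition~\ref{def:validity}) versus $\Gamma \Vdash_{\emptyset} \phi$ as the (Inf) clause evaluated at the empty base. These coincide because every base $\mathcal{C}$ satisfies $\emptyset \subseteq \mathcal{C}$, so the universal quantifier over $\mathcal{C} \supseteq \emptyset$ in (Inf) ranges over exactly the bases quantified in Definition~\ref{def:validity}; one also notes the monotonicity property $\Vdash_{\mathcal{B}} \psi$ and $\mathcal{B} \subseteq \mathcal{C}$ imply $\Vdash_{\mathcal{C}} \psi$ (immediate from Definition~\ref{def:base}, since derivations are preserved under base extension), so the final ``$\Vdash_{\mathcal{B}}\phi$'' in (Inf) can be read at $\mathcal{C}$. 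I expect this identification — essentially an unfolding of definitions — to be the main ``obstacle,'' in the sense that it is the only step requiring an actual argument rather than citation; the rest is a direct appeal to Lemmas~\ref{lem:alg-sound} and~\ref{lem:alg-complete} at $\mathcal{B} = \emptyset$. I would state this as a short paragraph and then conclude.
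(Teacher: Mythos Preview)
Your proposal is correct and follows exactly the paper's approach: the paper's proof reads in full ``Direct consequence of Lemmas~\ref{lem:alg-sound} and~\ref{lem:alg-complete}'', and you have simply made explicit the instantiation at the empty base $\mathcal{B}=\emptyset$ (which the paper itself flags just before Lemma~\ref{lem:algSoundInd}, noting that ``soundness and completeness then arise by considering the empty base''). Your unpacking of $\Gamma\Vdash\phi$ as $\Gamma\Vdash_{\emptyset}\phi$ and the identification $\mathcal{W}_{\emptyset}=\mathcal{W}$ are exactly the bookkeeping the paper elides.
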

\begin{proof}
Direct consequence of Lemmas~\ref{lem:alg-sound} and~\ref{lem:alg-complete}.
\end{proof}

\section{Relation to Kripke, Presheaf, and Sheaf Models}\label{sec:kripke}

The previous section has set out the details of a categorical account of Sandqvist's proof-theoretic semantics. In this section, we situate 
that account with reference to classical Kripke models, to the algebraic treatment of intuitionistic logic via complete Heyting algebras, and to the 
categorical treatment of higher order type theory via categories of presheaves and sheaves. 

The treatment of conjunction and implication is standard.
The treatment of disjunction, however, is not, and the close analysis of this exposes that there is a distinction between the proof-insensitive algebraic 
semantics and the proof-sensitive categorical semantics. In the algebraic semantics,  the soundness of Sandqvist's definition of disjunction can be 
established by exposing it as the natural disjunction in a sublocale (quotient complete Heyting algebra) of the locale obtained from the standard Kripke 
model (Definition \ref{def:nucleusK}, Lemma \ref{lemma:sublocale-interpretation}). The analogue in the categorical setting is sheaves for the 
corresponding (Grothendieck or Joyal-Tierney) topology. However, there is no real reason for the interpretations of atomic propositions to be sheaves, and 
the definition of disjunction is not the categorical coproduct. 

The soundness of the interpretation instead comes from a structural induction, not a 
property of arbitrary sheaves (Lemmas \ref{lem:disjunction-support-closure}, \ref{lem:alg-sound} and Proposition \ref{prop:presheaf-sound}). 
This is unusual in categorical interpretations of type theory, and so noteworthy. 


Classical Kripke models are grounded in a set of worlds, $\kripkeWorlds$.
For each atomic predicate $p$, we are told whether $p$ holds in world $w$:
$w\kripkeForces p$. When interpreting intuitionistic logic, the set of worlds 
can be thought of as possible states of knowledge. This set is partially 
ordered with the ordering representing increasing knowledge. This 
viewpoint gives rise to and is reflected in the monotonicity property: 
if $w\leq w'$ and $w\kripkeForces p$, then $w'\kripkeForces p$. 

The definition of validity is extended to arbitrary propositions by 
structural induction: 
\[
\begin{array}{r@{\quad}r@{\quad}c@{\quad}l} 
(\supset) & \mbox{$w\kripkeForces \phi \supset \psi$} 
& \mbox{iff} & \mbox{for all $w\leq w'$, if $w'\kripkeForces\phi$, then $w'\kripkeForces \psi$} \\ 
(\wedge) & \mbox{$w\kripkeForces \phi \wedge \psi$} & \mbox{iff} & \mbox{$w\kripkeForces \phi$ and $w\kripkeForces \psi$} \\     
 (\bot) & \mbox{$w \kripkeForces \bot$} & \mbox{never} & \\
 (\vee) & \mbox{$w \kripkeForces \phi\vee\psi$} & \mbox{iff} & \mbox{$w\kripkeForces\phi$ or $w\kripkeForces\psi$}\\
\end{array}
\]
The monotonicity property then extends to all formulae. The 
quantification over extensions in the clause for $(\supset)$ is required 
in order to ensure this. 

Base-extension semantics relates closely to Kripke models. In the simplest formulation (ours is a little
more complex), the set of 
worlds, $\kripkeWorlds$ is the set of bases being used, and the partial 
order is simply inclusion of bases. We define 
${\mathcal B}\kripkeForces p$ iff $\vdash_{\mathcal B} p$. 
This satisfies the monotonicity property because if 
${\mathcal B}\subseteq {\mathcal C}$ and $\vdash_{\mathcal B} p$, 
then $\vdash_{\mathcal C} p$. There are two main approaches to extending 
this definition for atomic formulas to general ones: 
\begin{itemize}
    \item[-] the approach of Schroeder-Heister et {al.} ({cf.} \cite{PiechaS19a}), which 
            follows exactly the Kripke definitions
    \item[-] the approach of Sandqvist \cite{Sandqvist2015IL}, which follows the 
            Kripke definitions for implication and conjunction, but not 
            disjunction or false. 
\end{itemize}

However, Kripke models also have a more algebraic interpretation.

\begin{lemma}
In any Kripke model, the interpretation of an arbitrary proposition
$\phi$ satisfies the monotonicity property: if $w\leq w'$ and 
$w\kripkeForces \phi$, then $w'\kripkeForces \phi$. 
\end{lemma}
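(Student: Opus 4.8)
The plan is to prove the monotonicity lemma by structural induction on the formula $\phi$, exactly paralleling the inductive clauses that define $\kripkeForces$ on compound formulae. The base case for atomic $p$ is precisely the monotonicity property stipulated as part of the data of a Kripke model (if $w \leq w'$ and $w \kripkeForces p$, then $w' \kripkeForces p$), so there is nothing to do there. The case $\phi = \bot$ is vacuous, since $w \kripkeForces \bot$ never holds. For $\phi = \psi_1 \wedge \psi_2$ and $\phi = \psi_1 \vee \psi_2$ the argument is immediate: monotonicity is inherited componentwise from the induction hypotheses applied to $\psi_1$ and $\psi_2$, using that conjunction and disjunction at a world are defined pointwise in terms of forcing of the immediate subformulae at that same world.

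The only case requiring genuine (if still routine) care is implication, $\phi = \psi_1 \supset \psi_2$. Here one fixes $w \leq w'$ with $w \kripkeForces \psi_1 \supset \psi_2$, and must show $w' \kripkeForces \psi_1 \supset \psi_2$, i.e.\ that for every $w''$ with $w' \leq w''$ and $w'' \kripkeForces \psi_1$ we have $w'' \kripkeForces \psi_2$. The point is that transitivity of the partial order gives $w \leq w''$ directly from $w \leq w' \leq w''$, so the assumption $w \kripkeForces \psi_1 \supset \psi_2$ applies to $w''$ itself and yields $w'' \kripkeForces \psi_2$. No appeal to the induction hypothesis is even needed in this case, because the universal quantification over extensions in the clause for $(\supset)$ is exactly what makes the property self-propagating; indeed this is the observation already flagged in the surrounding text (``The quantification over extensions in the clause for $(\supset)$ is required in order to ensure this'').

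The main obstacle, such as it is, is purely organisational rather than mathematical: one must be careful that the induction is genuinely on formula structure and that each clause of the definition of $\kripkeForces$ on compounds refers only to forcing of strictly smaller formulae (which it does), so that the induction hypothesis is available where needed. Since there is no circularity and the implication case closes by transitivity alone, the proof is short. I would write it as a four-case structural induction --- atoms (by hypothesis), $\bot$ (vacuous), $\wedge$ and $\vee$ (componentwise from the IH), and $\supset$ (by transitivity of $\leq$) --- and conclude that the monotonicity property extends to all formulae.
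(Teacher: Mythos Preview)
Your proposal is correct and is precisely the standard structural induction one would give; the paper itself states this lemma without proof, treating it as folklore, so there is nothing to compare against beyond noting that your argument is the expected one.
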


As a consequence, the interpretation of any proposition $\phi$ also 
satisfies the monotonicity property and so is an upwards-closed subset of $\kripkeWorlds$. 

\begin{definition}\label{def:kripkeOmega}
Let $\Omega$ be the set of upwards-closed subsets of $\kripkeWorlds$; that is, 
\[
\Omega = \{ U\subseteq\kripkeWorlds \mid \mbox{\rm if ${\mathcal B}\in U$ 
    and ${\mathcal B}\subseteq{\mathcal C}$, then ${\mathcal C}\in U$} \} 
\]
$\Omega$ is partially ordered by inclusion.
\end{definition}

\begin{lemma}
$\Omega$ is a (complete) Heyting algebra, and the interpretation of the 
logical connectives in that algebra coincides with that in the Kripke model. 
\end{lemma}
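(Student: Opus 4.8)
The plan is to recognise $\Omega$ as the frame of opens of the Alexandrov topology on $\kripkeWorlds$ and then to check the connectives case by case. First I would establish that $\Omega$ is a complete lattice: an arbitrary intersection and an arbitrary union of upwards-closed subsets of $\kripkeWorlds$ is again upwards-closed, so $\Omega$ is closed under both, meets being given by intersection, joins by union, with top $\kripkeWorlds$ and bottom $\emptyset$. Since joins are unions and binary meets are intersections, the infinite distributive law $U \cap \bigcup_i V_i = \bigcup_i (U \cap V_i)$ holds already at the level of sets, so $\Omega$ is a frame, equivalently a complete Heyting algebra. To make the Heyting structure explicit I would exhibit the relative pseudocomplement: for $U, V \in \Omega$ set
\[
U \Rightarrow V \;=\; \{\, w \in \kripkeWorlds \mid \text{for all } w' \ge w,\ w' \in U \text{ implies } w' \in V \,\},
\]
and check (i) that $U \Rightarrow V$ is upwards-closed --- immediate, since $w \le w''$ and $w'' \le w'$ give $w \le w'$ --- and (ii) the adjunction $W \cap U \subseteq V \iff W \subseteq (U \Rightarrow V)$ for every $W \in \Omega$, which is a routine unravelling of the definitions using the upwards-closure of $W$.

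Next I would verify that the algebraic operations compute the Kripke interpretation. Writing $\sem{\phi} = \{\, w \mid w \kripkeForces \phi \,\}$, the preceding lemma guarantees $\sem{\phi} \in \Omega$. The clauses for $\wedge$, $\vee$ and $\bot$ give $\sem{\phi \wedge \psi} = \sem{\phi} \cap \sem{\psi}$, $\sem{\phi \vee \psi} = \sem{\phi} \cup \sem{\psi}$ and $\sem{\bot} = \emptyset$ directly, so these match meet, join and bottom in $\Omega$. The only case requiring a moment's thought is implication: the Kripke clause says $w \kripkeForces \phi \supset \psi$ iff for all $w' \ge w$, $w' \kripkeForces \phi$ implies $w' \kripkeForces \psi$, which is exactly membership of $w$ in $\sem{\phi} \Rightarrow \sem{\psi}$ as defined above. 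Hence $\sem{\phi \supset \psi}$ is the Heyting implication of $\sem{\phi}$ and $\sem{\psi}$, and the interpretation of every connective coincides with the corresponding operation in $\Omega$.

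The main obstacle is not any single computation but correctly identifying the relative pseudocomplement: one must see that the universal quantification over extensions $w' \ge w$ in the Kripke clause for $\supset$ is precisely what the adjunction defining Heyting implication forces, and in particular that omitting it would already break upwards-closure. Once that correspondence is in place --- together with the elementary closure facts for $\Omega$ and the monotonicity lemma ensuring that each $\sem{\phi}$ lies in $\Omega$ --- the coincidence of all the connectives, and hence the claim, follows.
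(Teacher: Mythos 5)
Your proof is correct and is exactly the standard argument the paper relies on (the lemma is stated there without proof): $\Omega$ is the frame of Alexandrov-open sets, with meets and joins given set-theoretically, Heyting implication given by the quantification over extensions, and the Kripke clauses matching the lattice operations connective by connective. The adjunction check for $\Rightarrow$, including the use of upwards-closure of $W$ in the forward direction, is the one nontrivial point and you handle it correctly.
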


Moreover, if $w\in\kripkeWorlds$, then 
$w\up = \{ w' \in \kripkeWorlds \mid w\leq w' \}$ is in $\Omega$. 
However, if $w_0\leq w_1$ then $w_{1}\up \subseteq w_{0}\up$. So it is 
$\kripkeWorlds\op$ that embeds in $\Omega$, not $\kripkeWorlds$. Indeed, 
$\Omega$ can be seen as a free completion under arbitrary disjunctions of 
$\kripkeWorlds\op$. This means that Kripke models can be seen as simply 
a convenient way of presenting a complete Heyting algebra. 

The standard Kripke account gives an interpretation of validity, but not 
really of proof, or justification. In order to obtain that, we need to 
move to a setting which can distinguish between proofs that are different. One 
way of doing that is to use presheaves. 

\begin{definition}
If $\mathbf C$ is a category, then a {\em presheaf on ${\mathbf C}$} is 
a functor $F:{\mathbf C}\op \to\Set$, and a {\em morphism of presheaves} 
is a natural transformation between the functors. 
\end{definition}

Categories of presheaves generalize Kripke models. To cast a Kripke model 
in terms of presheaves, let ${\mathbf C} = \kripkeWorlds\op$, so 
${\mathbf C}\op = \kripkeWorlds$. As a result the presheaf category
corresponding to the Kripke model is actually $\Set^{\kripkeWorlds}$. 
Here we take the standard interpretation 
of a partial order as a category: the elements of the partial order form 
the objects of the category, and the hom-set $\kripkeWorlds (w_0,w_1)$ 
contains a single element if $w_o\leq w_1$ and is empty otherwise. 

In making the connection between Kripke and presheaves, a simple but 
important presheaf is the presheaf $1$, defined by 
$1(w) = \{*\}$. If $\phi$ is a sub-presheaf of this, then each $\phi(w)$ 
will either be $\emptyset$ or $\{*\}$. And if $w_0\leq w_1$, then there 
is a function $\phi(w_0) \to \phi(w_1)$. As a result, if $\phi(w_0)$ is non-empty, 
then so is $\phi(w_1)$. Hence $\phi$ corresponds to an upwards-closed 
subset of $\kripkeWorlds$. Conversely, if $U$ is an upwards-closed subset 
of $\kripkeWorlds$, then we can define $\chi_U : \kripkeWorlds\to\Set$ by 
\begin{itemize}
    \item[--] $\chi_U (w) = \{*\}$ if $w\in U$
    \item[--] $\chi_U (w) = \emptyset$ otherwise
\end{itemize}
$\chi_U$ is a subfunctor of $1$. For the case of a category of presheaves 
corresponding to a Kripke model, this demonstrates an order-preserving  
correspondence between the possible interpretations of propositions in the 
Kripke model and the subfunctors of $1$ in the presheaf category.
We have established:

\begin{lemma}
    There is an order-preserving isomorphism between
    \begin{itemize}
        \item[-] $\Omega$, the lattice of upwards-closed subsets of $\kripkeWorlds$, and
        \item[-] the lattice of subfunctors of $1$ in the presheaf topos $\Set^{\kripkeWorlds}$.
    \end{itemize}
\end{lemma}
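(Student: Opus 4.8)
The plan is to exhibit the isomorphism explicitly and then check that both directions are order-preserving and mutually inverse. For the forward direction, I would send an upwards-closed subset $U \subseteq \kripkeWorlds$ to the subfunctor $\chi_U$ of $1$ defined in the preceding paragraph: $\chi_U(w) = \{*\}$ if $w \in U$ and $\chi_U(w) = \emptyset$ otherwise, with the (unique, when defined) restriction maps. One must first verify $\chi_U$ really is a subfunctor of $1$: the only content is that whenever $w_0 \leq w_1$ the required map $\chi_U(w_0) \to \chi_U(w_1)$ exists, which holds precisely because $U$ is upwards-closed (if $w_0 \in U$ then $w_1 \in U$, so the map $\{*\} \to \{*\}$ is available). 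Naturality and functoriality are automatic since all hom-sets in the poset category are singletons or empty, so every diagram of sets involved commutes trivially.

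For the reverse direction, given a subfunctor $\phi \hookrightarrow 1$, I would send it to $U_\phi = \{ w \in \kripkeWorlds \mid \phi(w) \neq \emptyset \}$. Here the key step, already sketched in the excerpt, is that $U_\phi$ is upwards-closed: if $w_0 \leq w_1$ there is a function $\phi(w_0) \to \phi(w_1)$, so $\phi(w_0) \neq \emptyset$ forces $\phi(w_1) \neq \emptyset$. I would then note that since $\phi$ is a subfunctor of $1$, each $\phi(w)$ is either $\emptyset$ or $\{*\}$ (a subset of $1(w) = \{*\}$), so $\phi$ is determined by the set of worlds at which it is inhabited; this is exactly what makes $\phi \mapsto U_\phi$ and $U \mapsto \chi_U$ mutually inverse. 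One direction, $U_{\chi_U} = U$, is immediate from the definitions; the other, $\chi_{U_\phi} \cong \phi$ (indeed equality up to the canonical identification of the one-element set), follows from the pointwise characterization just given.

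Finally I would check that both assignments are monotone for the inclusion orders: $U \subseteq U'$ plainly gives $\chi_U \leq \chi_{U'}$ as subfunctors of $1$ (pointwise inclusion), and conversely $\phi \leq \psi$ as subfunctors gives $\phi(w) \subseteq \psi(w)$ for all $w$, hence $U_\phi \subseteq U_\psi$. Combining monotonicity in both directions with the fact that the maps are mutually inverse yields the claimed order-preserving isomorphism.

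I do not expect a genuine obstacle here; the statement is essentially a bookkeeping exercise made easy by the fact that the base category is a poset (so all coherence conditions are vacuous) and the subobjects of $1$ in any presheaf topos are exactly the sieve-valued, here upward-closed, data. The only point that needs a moment's care is the direction of the ordering — because $\kripkeWorlds\op$ (not $\kripkeWorlds$) embeds in $\Omega$, one must be careful that the presheaf category in play is $\Set^{\kripkeWorlds}$, i.e.\ presheaves on $\kripkeWorlds\op$, which is precisely the convention fixed just before the statement, so no sign error arises.
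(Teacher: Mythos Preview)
Your proposal is correct and follows essentially the same approach as the paper: the paper presents the two constructions $U\mapsto\chi_U$ and $\phi\mapsto\{w\mid\phi(w)\neq\emptyset\}$ in the discussion immediately preceding the lemma and then simply declares ``We have established'' the result. You have merely made explicit the mutual-inverse and monotonicity checks that the paper leaves to the reader.
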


This means that either of these lattices can be used 
interchangeably as the target of a validity-based semantics for 
intuitionistic propositional logic. 

However, categories of presheaves include functors targeting sets with 
more than one element. They do not just provide models of intuitionistic 
propositional logic, they form models of simple intuitionistic type theory.
They carry interpretations of type-forming operations including products
and sums. 

In the following let $F$ and $G$ be two functors $\mathbf{C}\op\to\Set$.
\begin{itemize}
    \item[-] The product of $F$ and $G$ in the category of presheaves is the functor $F\times G$ defined on objects by $F\times G (A) = F(A)\times_{\Set} G(A)$, where the action on morphisms is component-wise. 
    \item[-] The categorical function space $[F\to G]$ is defined on objects as follows:
    $[F\to G](A)$ is the set of natural transformations from $F\times \mathbf{C}({-},A)$ to $G$. 
    The action on morphisms is obtained via composition. 
    \item[-] The coproduct of $F$ and $G$ is the functor $(F+G)(A) = F(A) +_{\Set} G(A)$, where $+_{\Set}$ is the coproduct (i.e., disjoint union) in $\Set$.
\end{itemize}

If $\mathbf{C} = \kripkeWorlds\op$ is a partial order, and $F$ and $G$ are 
subfunctors of $1$, then these definitions correspond to the 
interpretations of $\wedge$, $\supset$, and $\vee$ in the corresponding 
Kripke model. 

Moreover, there are links to the categorical interpretation in 
Section~\ref{sec:categorical}. The constructions given there are internal 
constructions in the topos of presheaves: $\mathcal{W}\op\to\Set$.  We 
recall from Definition \ref{def:int-func}, that the interpretation of 
an atom $p$ is a functor $\mathcal{W}\op\to\Set$, and that for arbitrary 
propositions, $\phi$ and $\psi$, the interpretations of $\phi\wedge\psi$ and $\phi\supset\psi$ are defined to be $\sem{\phi}\times\sem{\psi}$ and 
$\sem{\phi}\supset\sem{\psi}$ respectively. 

\begin{lemma} For atoms, $p$ and arbitrary propositions $\phi$ and 
$\psi$, the interpretations defined in Definition \ref{def:int-func} 
have the following properties: 
\begin{itemize}[label=--,leftmargin=5mm,nosep]
    \item the interpretation of an atom $p$ is a functor 
    $\sem{p}:\mathcal{W}\op\to\Set$
    \item the interpretation of $\phi\wedge\psi$ is $\sem{\phi}\times\sem{\psi}$
    \item the interpretation of $\phi\supset\psi$ is $\sem{\phi}\to\sem{\psi}$.
\end{itemize}
\end{lemma}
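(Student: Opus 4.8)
The plan is to observe that all three clauses are essentially immediate unfoldings of Definition~\ref{def:int-func} together with the construction of products and exponentials in a presheaf topos recalled just above, so the work is entirely a matter of checking that the pointwise and adjoint definitions given in Section~\ref{sec:categorical} agree with the generic presheaf constructions on $\mathcal{W}\op\to\Set$.

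For the atomic clause, I would first verify that the assignment sending $(\mathcal{B},(X\col P))$ to the set of derivations $(X\col P)\vdash_\mathcal{B}\Phi\col p$, together with the substitution action on morphisms specified in Definition~\ref{def:int-func}, is indeed a functor $\mathcal{W}\op\to\Set$. Functoriality reduces to two facts already established: the identity morphism $(x_1,\ldots,x_n)$ acts by the substitution $[x_i/x_i]$, which is the identity on derivations; and composition of morphisms is respected because, by Lemma~\ref{lemmaAssoc}, iterated substitution is associative, so substituting along a composite coincides with the two successive substitutions. Hence $\sem{p}$ is a well-defined presheaf, which is exactly the first item.

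For conjunction, Definition~\ref{def:int-func} sets $\sem{\phi\wedge\psi}$ to be the categorical product of the presheaves $\sem{\phi}$ and $\sem{\psi}$ in $[\mathcal{W}\op,\Set]$, and products of presheaves are computed pointwise, as recorded in the bulleted list preceding this lemma: $(F\times G)(A) = F(A)\times_{\Set} G(A)$ with component-wise action on morphisms. So the second item holds by definition together with the standard fact that presheaf categories have pointwise products. For implication, Definition~\ref{def:int-func} sets $\sem{\phi\supset\psi} = \sem{\phi}\supset\sem{\psi}$, where for presheaves $F,G$ the functor $F\supset G$ is defined as the right adjoint to $(-)\times F$. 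By Lemma~\ref{lemma:right-adjoints-presheaves}, an application of the Yoneda lemma, its value at $(\mathcal{B},(X\col P))$ can be taken to be $\mathop{Nat}(h^{(\mathcal{B},(X\col P))}\times F,G)$, which is precisely the definition of the categorical function space $[F\to G]$ recalled above, taking $\mathbf{C} = \mathcal{W}\op$ so that $\mathbf{C}(-,A) = h^A$. Hence $\sem{\phi\supset\psi}$ is $\sem{\phi}\to\sem{\psi}$, the third item.

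I do not expect a genuine obstacle here: the only points requiring care are the functoriality check for $\sem{p}$, which rests squarely on the two substitution lemmas, and keeping track of the $(-)\op$ convention so that the representable $h^{(\mathcal{B},(X\col P))}$ used in the definition of the exponential in $[\mathcal{W}\op,\Set]$ is matched correctly against $\mathbf{C}(-,A)$ with $\mathbf{C}=\mathcal{W}\op$. Everything else is definitional.
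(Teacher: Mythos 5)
Your proof is correct and matches the paper's intent exactly: the paper states this lemma without proof, treating all three items as immediate from Definition~\ref{def:int-func} together with the pointwise product and the Yoneda/adjoint characterization of the exponential. Your elaboration — functoriality of $\sem{p}$ via the identity substitution and Lemma~\ref{lemmaAssoc}, and the matching of $\sem{\phi}\supset\sem{\psi}$ with the presheaf function space via Lemma~\ref{lemma:right-adjoints-presheaves} — supplies precisely the routine checks the paper elides.
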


However, the interpretation of $\phi\vee\psi$ is not the coproduct 
$\sem{\phi}+\sem{\psi}$ in the presheaf topos 
$\mathcal{W}\op\to\Set$. 
Instead it is the interpretation of the second order formula:
\[\forall p. (\sem{\phi}\to \sem{p}) \to (\sem{\psi}\to \sem{p}) \to \sem{p} \]
There are two equivalent ways of formalizing this intuition. 
\begin{itemize}[label=--,leftmargin=5mm,nosep]
    \item We can take the collection of atomic propositions to be an external set, and the quantification as the conjunction of all the 
    $(\sem{\phi}\to \sem{p}) \to (\sem{\phi}\to \sem{p}) \to \sem{p}$, using the fact that the lattice of propositions has infinite meets.
    \item We can internalize the collection of atomic propositions as
    the constant presheaf $\Delta\setAtoms$, where $(\Delta\setAtoms)(w) = \setAtoms$. We interpret $(\sem{\phi}\to \sem{p}) \to (\sem{\phi}\to \sem{p}) \to \sem{p}$ as an internal family of propositions indexed by $\Delta\setAtoms$, and interpret the $\forall p$ as the internal quantification in the logic of the topos, which in this case is the product: $\prod_p. (\sem{\phi}\to \sem{p}) \to (\sem{\psi}\to \sem{p}) \to \sem{p}$.
\end{itemize}
The second of these gives rise to the construction detailed in Section \ref{sec:categorical}. 

It follows that the structure we have defined in 
Section~\ref{sec:categorical} is naturally a model of 
$F_{at}$, a restriction of the second-order lambda calculus with function types and polymorphism restricted 
to be over a collection of atomic types. The restriction of $\forall$ to a predefined set means that 
$F_{at}$ is predicative, in contrast to the full 
System $F$ \cite{Girard71,GLT89}, and it further means that it can be interpreted in a broadly set-theoretic setting where the 
universal types are interpreted as products over the set of atomic types. This is exactly what our 
categorical model is doing. There is therefore a strong link between the structures here and those 
investigated extensively by Ferreira et al. \cite{F2006,FF2013a,FF2013b,FF2015,FES2020,PTP2022}.  However, Ferreira's treatment is essentially syntactic, while ours 
is strongly semantic. In particular, the types of $F_{at}$ exist in a wider context. However, our proof of 
Lemma \ref{lem:disjunction-support-closure} can be seen as a semantic equivalent of Ferreira's `instantiation 
overflow' for the case of disjunction. 

This interpretation supports the introduction rules for disjunction:
for any objects $A$ and $B$ of the topos there are morphisms 
$A\to \forall p. (A\to \sem{p}) \to (B\to \sem{p}) \to \sem{p}$ and 
$B\to \forall p. (A\to \sem{p}) \to (B\to \sem{p}) \to \sem{p}$. 
But there is not a similarly general interpretation of the elimination 
rule. Instead, we get an interpretation where one of the variables is 
restricted to range over interpretations of formulae rather than arbitrary
objects of the topos. 

Consider the disjunction elimination rule: 
\[\infer[\vee{E}]{\Gamma \vdash \chi}
            {\Gamma \vdash \phi \vee \psi & \Gamma, \phi \vdash \chi &
              \Gamma, \psi \vdash \chi}\]

We want to encode this rule in terms of the structure of the topos. To 
do that, we consider an interpretation where the metavariables $\phi$, $\psi$, 
and $\chi$ represent objects of the topos rather than formulae. 
Replacing $\phi$, $\psi$, and $\chi$ by $A$, $B$, and $C$, we see that we want a morphism 
\[
    \sigma(A,B)\times C^A \times C^B \to C
\]
where $\sigma$ is a binary operation on objects of the topos giving the semantics of disjunction. 

Specifically, Sandqvist's semantics uses 
$\sigma(A,B) = \forall p.  (A\to \sem{p}) \to (B\to \sem{p}) \to \sem{p}$, 
which is defined in any sheaf topos. For this interpretation, if $\chi$ is interpreted by an arbitrary 
object, then the rule is not sound.  However, if the object that interprets $\chi$ is constructed 
propositionally from the interpretations of atoms, and so is the interpretation of a proposition, 
then soundness is recovered.  

\begin{definition}\label{def:supports-disjunction}
We say that a type constructor $\sigma(A,B)$ supports disjunction elimination for an object $C$ of the topos if there is a morphism
\[\sigma(A,B)\times [A\to C] \times [B\to C] \longrightarrow C\]
\end{definition}

The objects for which a constructor $\sigma$ supports disjunction elimination have good closure properties. 

\begin{lemma}\label{lem:disjunction-support-closure} \mbox{ }
\begin{enumerate}
\item $\sigma$ always supports disjunction elimination for the terminal object $1$; that is, when $C$ is $1$, the constant functor taking value $\{*\}$. 
\item If $\sigma$ supports disjunction elimination for $C_0$ and $C_1$, then $\sigma$ supports disjunction elimination for $C_0\times C_1$. 
\item If $\sigma$ supports disjunction elimination for $C$, then $\sigma$ supports disjunction elimination for $[D\to C]$. 
\item If $\sigma$ supports disjunction elimination for any $C_x$ where $x\in X$, then $\sigma$ supports disjunction elimination for $\prod_{x\in X} C_x$ (and a similar but more complex statement for internal products in the topos). 
\end{enumerate}
\end{lemma}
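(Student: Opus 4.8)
The plan is to use only two structural facts about the ambient topos (of presheaves, or sheaves) of Section~\ref{sec:categorical}: it is cartesian closed, so for every object $X$ the functor $(-)\times X$ has a right adjoint $[X\to -]$ (giving currying), and these right adjoints preserve all limits, in particular finite and arbitrary products. Since Definition~\ref{def:supports-disjunction} only asks for the \emph{existence} of a morphism $\sigma(A,B)\times[A\to C]\times[B\to C]\to C$, with no naturality constraint in $A$, $B$, or $C$, every clause is a purely formal manipulation of morphisms; I will freely use diagonals $\Delta\colon Z\to Z\times Z$ to duplicate shared factors such as $\sigma(A,B)$, and note that $\sigma$ need not be functorial for any of this.

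For (1), $1$ is terminal, so the unique morphism $\sigma(A,B)\times[A\to 1]\times[B\to 1]\to 1$ witnesses the claim. For (2), write $e_i\colon\sigma(A,B)\times[A\to C_i]\times[B\to C_i]\to C_i$ for the given witnesses ($i=0,1$). Using $[A\to C_0\times C_1]\cong[A\to C_0]\times[A\to C_1]$ and the same for $B$, together with a diagonal on $\sigma(A,B)$, rearrange the domain into $\prod_{i=0,1}\bigl(\sigma(A,B)\times[A\to C_i]\times[B\to C_i]\bigr)$ and apply $e_0\times e_1$, landing in $C_0\times C_1$.

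For (3), let $e\colon\sigma(A,B)\times[A\to C]\times[B\to C]\to C$ be the given witness for $C$. By the exponential adjunction it suffices to build a morphism $\sigma(A,B)\times[A\to[D\to C]]\times[B\to[D\to C]]\times D\to C$; using the cartesian-closed isomorphism $[A\to[D\to C]]\cong[D\to[A\to C]]$ (and likewise for $B$), a diagonal on $D$, and two evaluation maps $[D\to[A\to C]]\times D\to[A\to C]$, transport the domain to $\sigma(A,B)\times[A\to C]\times[B\to C]$ and post-compose with $e$; currying back along $D$ yields the required morphism into $[D\to C]$.

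For (4), let $e_x\colon\sigma(A,B)\times[A\to C_x]\times[B\to C_x]\to C_x$ be the given witnesses. Since $[A\to\prod_{x}C_x]\cong\prod_{x}[A\to C_x]$ (right adjoints preserve products) and similarly for $B$, the universal property of $\prod_{x}C_x$ reduces the task to producing, for each $x\in X$, a morphism $\sigma(A,B)\times\prod_{x}[A\to C_x]\times\prod_{x}[B\to C_x]\to C_x$; take the product projections to the $x$-th components and a diagonal on $\sigma(A,B)$, then apply $e_x$. The internal-product variant runs along exactly the same lines, now in the internal logic (or along a display map $C\to X$): it uses that in a topos, equivalently any locally cartesian closed category, exponentiation commutes with dependent products. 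I expect this last clause to be the only real obstacle, and it is one of formulation rather than substance: one must state the fiberwise hypothesis on the family $(C_x)_{x\in X}$ precisely and check the commutation of $[A\to -]$ with the relevant $\Pi$-type; the four external manipulations above are otherwise routine diagram chases.
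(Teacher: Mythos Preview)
Your proposal is correct and follows essentially the same approach as the paper: all four clauses are handled by straightforward manipulation of the cartesian closed structure, using terminality for (1), the isomorphism $[A\to C_0\times C_1]\cong[A\to C_0]\times[A\to C_1]$ for (2), exponential transpose for (3), and the product case as a generalization of (2) for (4). Your treatment of (3) via uncurrying, the swap $[A\to[D\to C]]\cong[D\to[A\to C]]$, and evaluation is more explicit than the paper's one-line ``constructed from the exponential of the map'', but it unpacks exactly the same construction.
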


\begin{proof}
The proofs are all straightforward manipulation of the cartesian closed structure of the topos. 
\begin{enumerate}
    \item We require a morphism $\sigma(A,B)\times [A\to 1] \times [B\to 1] \to 1$, which exists uniquely because $1$ is terminal. 
    \item\label{lem:case-prod} We require a morphism 
    \[
    \sigma(A,B)\times [A\to (C_0\times C_1)] \times [B\to (C_0 \times C_1)] \to (C_0\times C_1)
    \]
    but $[A\to (C_0\times C_1)]$ is isomorphic to $[A\to {C_0}] \times [A\to {C_1}]$, and similarly for 
    $[B\to (C_0\times C_1)]$. This means that the required morphism can easily be constructed from the morphisms we are given 
    \[
    \sigma(A,B)\times [A\to {C_0}] \times [B\to {C_0}] \to C_0
    \]
    and $\sigma(A,B)\times [A\to {C_1}] \times [B\to {C_1}] \to C_1$.
    \item Similarly, we require a morphism 
    \[
    \sigma(A,B)\times [A\to [D\to C]] \times [B\to [D\to C]] \to [D\to C] 
    \]
    but this can be constructed from the exponential of the map 
    \[
    \sigma(A,B)\times [A\to C] \times [B\to C] \to C
    \]
    \item This is essentially as \ref{lem:case-prod}.
\end{enumerate}
\end{proof}

\begin{lemma}\label{lemma:alg-sound}
    Let 
    $\sigma (A,B) = \prod_p (A\to \sem{p}) \to (B\to \sem{p}) \to \sem{p}$, 
    then $\sigma$ supports disjunction elimination for any object which is the semantics of a formula: $\sem{\phi}$.
\end{lemma}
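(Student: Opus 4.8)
The plan is to prove Lemma~\ref{lemma:alg-sound} by structural induction on the formula $\phi$, using the closure properties of Lemma~\ref{lem:disjunction-support-closure} together with the explicit description of $\sem{\phi}$ from Definition~\ref{def:int-func}. The statement asserts that $\sigma$ supports disjunction elimination for $C = \sem{\phi}$, i.e.\ that there is a morphism $\sigma(A,B)\times[A\to\sem{\phi}]\times[B\to\sem{\phi}]\to\sem{\phi}$ for all objects $A$ and $B$; the induction will be over how $\sem{\phi}$ is built.

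First I would handle the atomic case $\phi = p$. Here $\sem{p}$ is not manifestly built from the closure operations of Lemma~\ref{lem:disjunction-support-closure}, so this is the genuine base case and the crux of the argument. We need a morphism $\sigma(A,B)\times[A\to\sem{p}]\times[B\to\sem{p}]\to\sem{p}$ where $\sigma(A,B) = \prod_q (A\to\sem{q})\to(B\to\sem{q})\to\sem{q}$; instantiating the internal product at $q = p$ and evaluating gives exactly such a map (composing the projection $\sigma(A,B)\to (A\to\sem{p})\to(B\to\sem{p})\to\sem{p}$ with the two further exponential evaluations). This is the semantic counterpart of Ferreira's `instantiation overflow' mentioned in the text, and it is the step I expect to be the main obstacle: one must check that this instantiation-and-evaluation really is a morphism of the topos, i.e.\ natural in the underlying world, which follows from the naturality built into the internal product $\prod_q$ and the exponential structure.

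Next the inductive cases fall directly out of Lemma~\ref{lem:disjunction-support-closure} together with Definition~\ref{def:int-func}. For $\phi = \phi_1\wedge\phi_2$, $\sem{\phi} = \sem{\phi_1}\times\sem{\phi_2}$, so by the induction hypothesis $\sigma$ supports disjunction elimination for $\sem{\phi_1}$ and $\sem{\phi_2}$, and clause~2 of Lemma~\ref{lem:disjunction-support-closure} gives it for the product. For $\phi = \phi_1\supset\phi_2$, $\sem{\phi} = \sem{\phi_1}\to\sem{\phi_2}$, so the induction hypothesis for $\sem{\phi_2}$ and clause~3 (with $D = \sem{\phi_1}$) give the result. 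For $\phi = \phi_1\vee\phi_2$, $\sem{\phi} = \forall p.\,(\sem{\phi_1}\to\sem{p})\to(\sem{\phi_2}\to\sem{p})\to\sem{p}$, which is the internal product $\prod_p$ of objects built by exponentials and products from the $\sem{p}$: applying the atomic case for each $\sem{p}$, then clause~3 twice and clause~2, then clause~4 for the internal product, yields disjunction elimination for $\sem{\phi_1\vee\phi_2}$. The case $\phi = \bot$ is the nullary instance of disjunction, $\sem{\bot} = \forall p.\,\sem{p}$, handled the same way using the atomic case and clause~4. Finally I would remark that $\sem{\top}$, if needed, is the terminal object and is covered by clause~1. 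Assembling these cases completes the induction and hence the proof.
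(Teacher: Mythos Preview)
Your proposal is correct and follows essentially the same approach as the paper's own proof: structural induction on $\phi$, with the atomic base case handled by projecting $\sigma(A,B)=\prod_q(A\to\sem{q})\to(B\to\sem{q})\to\sem{q}$ onto the $q=p$ component followed by evaluation, and the inductive cases for $\wedge$, $\supset$, and $\vee$ reduced to the closure properties of Lemma~\ref{lem:disjunction-support-closure}. You are simply more explicit than the paper about which clauses of that lemma are invoked in the disjunction case, and you additionally spell out the $\bot$ case (which the paper omits but which is indeed handled by clause~4 applied to the atomic case).
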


\begin{proof}
    The proof is by induction on the structure of $\phi$. The base case is $\phi=q$ where $q$ is an atom. The required morphism
    \[
    (\prod_p (A\to \sem{p}) \to (B\to \sem{p}) \to \sem{p}) \times 
    (A\to \sem{q}) \times (B\to\sem{q}) \to \sem{q}
    \] 
    is projection onto the  $q$ component followed by evaluation. The inductive cases for conjunction and implication follow immediately from Lemma \ref{lem:disjunction-support-closure} as the interpretations are given by product and exponentiation in the topos. The inductive case for disjunction also follows from the same lemma, since if $\phi=\psi\vee\chi$, then 
    $\sem{\phi} = \prod_p (\sem{\psi}\to \sem{p}) \to (\sem{\chi}\to \sem{p}) \to \sem{p}$.
\end{proof}

It is not the case that arbitrary objects of the topos support disjunction elimination. Specifically, the object $0$, which corresponds to the constant functor with value the empty set does not do so. If $A=B=C=0$, then $\sigma(A,B)=\prod_p (A\to \sem{p}) \to (B\to \sem{p})\to \sem{p}$ is isomorphic to $\prod_p \sem{p}$, while $[A\to C]$ and $[B\to C]$ are both isomorphic to $1$, but 
$\prod_p \sem{p}$ may be non-empty and hence there may be no morphism $\prod_p \sem{p}\to 0$. 

Soundness of the interpretation is now immediate: 

\begin{proposition}\label{prop:presheaf-sound}
  Given a proof in intuitionistic propositional logic of $\Gamma\vdash\phi$ then we can inductively define a morphism in the presheaf topos (a natural transformation between functors) $\sem{\Gamma}\to\sem{\phi}$. The induction is based on both the structure of the proof, and the syntactic structure of the formulae within it. 
\end{proposition}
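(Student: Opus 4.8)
The plan is to prove Proposition~\ref{prop:presheaf-sound} by induction on the structure of the NJ-derivation of $\Gamma \vdash \phi$, exactly as one does for the standard interpretation of NJ in a bicartesian closed category, with the sole non-standard point being the case of $\vee$-elimination. First I would handle the structural rules and the rules for $\wedge$ and $\supset$: these are entirely routine, since by the earlier lemma the interpretation sends $\phi \wedge \psi$ to the categorical product $\sem{\phi} \times \sem{\psi}$ and $\phi \supset \psi$ to the exponential $\sem{\phi} \to \sem{\psi}$, so the introduction and elimination rules are interpreted by pairing, projection, currying, and evaluation in the cartesian closed structure of the presheaf topos $\mathcal{W}^{op} \to \Set$, and weakening/contraction/exchange on $\Gamma$ are handled by the product structure on $\sem{\Gamma}$. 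The $\vee$-introduction rules are also straightforward: as noted in the discussion preceding Definition~\ref{def:supports-disjunction}, for any objects $A$ and $B$ there are morphisms $A \to \sigma(A,B)$ and $B \to \sigma(A,B)$ where $\sigma(A,B) = \prod_p (A \to \sem{p}) \to (B \to \sem{p}) \to \sem{p}$, obtained by abstracting and then applying the appropriate hypothesis; precomposing with $\sem{\Gamma} \to \sem{\phi}$ (resp.\ $\sem{\Gamma} \to \sem{\psi}$) from the induction hypothesis gives the required $\sem{\Gamma} \to \sem{\phi \vee \psi}$.

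The crucial case is $\vee$-elimination: from proofs of $\Gamma \vdash \phi \vee \psi$, $\Gamma, \phi \vdash \chi$, and $\Gamma, \psi \vdash \chi$ we must build $\sem{\Gamma} \to \sem{\chi}$. Here the induction hypotheses give natural transformations $\sem{\Gamma} \to \sem{\phi \vee \psi} = \sigma(\sem{\phi}, \sem{\psi})$, and $\sem{\Gamma} \times \sem{\phi} \to \sem{\chi}$, $\sem{\Gamma} \times \sem{\psi} \to \sem{\chi}$; currying the latter two and pairing with the first yields $\sem{\Gamma} \to \sigma(\sem{\phi}, \sem{\psi}) \times [\sem{\phi} \to \sem{\chi}] \times [\sem{\psi} \to \sem{\chi}]$. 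To finish we need a morphism $\sigma(\sem{\phi}, \sem{\psi}) \times [\sem{\phi} \to \sem{\chi}] \times [\sem{\psi} \to \sem{\chi}] \to \sem{\chi}$, i.e.\ we need that $\sigma$ supports disjunction elimination (in the sense of Definition~\ref{def:supports-disjunction}) for the object $\sem{\chi}$ --- and this is precisely Lemma~\ref{lemma:alg-sound}, which applies because $\chi$ is a formula and hence $\sem{\chi}$ is the semantics of a formula. Composing gives the desired $\sem{\Gamma} \to \sem{\chi}$, and one checks naturality, which holds because every construction used (pairing, currying, evaluation, and the morphism from Lemma~\ref{lemma:alg-sound}) is a morphism in the topos.

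The case of $\bot$-elimination is similar but uses the degenerate instance: $\sem{\bot} = \forall_{\mathcal{A}} K$ with $K((\mathcal{B},(X\col P)),p) = \sem{p}(\mathcal{B},(X\col P))$, i.e.\ $\sem{\bot} = \prod_p \sem{p}$, so from a proof of $\Gamma \vdash \bot$ we get $\sem{\Gamma} \to \prod_p \sem{p}$, and to obtain $\sem{\Gamma} \to \sem{\phi}$ we project onto the relevant components and reconstruct $\sem{\phi}$ by the structural induction on $\phi$ (product of projections for $\wedge$, exponential transpose for $\supset$, and for $\vee$ again invoking that $\sem{\phi \vee \psi}$ is built propositionally) --- this is exactly the pattern of the $\natOp{p} = \phi_1 \wedge \phi_2$, $\phi_1 \supset \phi_2$, $\phi_1 \vee \phi_2$ subcases appearing in the proof of Lemma~\ref{lem:algSoundInd} and can equally be phrased via a ``supports $\bot$-elimination'' closure statement paralleling Lemma~\ref{lem:disjunction-support-closure}.

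The main obstacle, and the only place where genuine content beyond bookkeeping enters, is the $\vee$-elimination step: unlike in the usual bicartesian closed interpretation, $\sem{\phi \vee \psi}$ is not a coproduct and has no universal property expressing a map out of it, so soundness cannot be derived uniformly for an arbitrary target object $\sem{\chi}$ --- indeed, as the counterexample with $0$ after Lemma~\ref{lemma:alg-sound} shows, it fails for general objects. This is why the induction must be structured on \emph{both} the proof and the syntax of the formulae (as the statement of Proposition~\ref{prop:presheaf-sound} emphasizes): the target $\sem{\chi}$ must be known to be the interpretation of a formula so that Lemma~\ref{lemma:alg-sound} is available, and this is exactly the subtlety flagged in the remarks opening Section~\ref{sec:kripke} about the interpretation relying on a structural induction rather than a property of arbitrary sheaves.
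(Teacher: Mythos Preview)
Your proposal is correct and matches the paper's approach: the paper treats Proposition~\ref{prop:presheaf-sound} as immediate from Lemma~\ref{lemma:alg-sound} and Lemma~\ref{lem:disjunction-support-closure}, with the routine cases handled by the cartesian closed structure and the $\vee$-elimination case discharged exactly as you describe, by appealing to the fact that $\sigma$ supports disjunction elimination for any $\sem{\chi}$. The detailed version of the same argument, with the inner induction on $\chi$ written out explicitly rather than packaged into Lemma~\ref{lemma:alg-sound}, appears later as Theorem~\ref{thm:CatSoundness}.
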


This soundness result sits in contrast to the soundness of the traditional interpretation of intuitionistic proof theory in topoi. In that case, the result allows proofs including propositional variables, which are to be interpreted as arbitrary objects of the topos. In our case, we are restricted to proofs with formulae built from a collection of atoms, each of which has a defined semantics as a particular object of the topos. 

There is, however a way of escaping this restriction. We will do this in two stages. First we consider a semantics based on validity, and then one which is proof relevant. 

For the validity-based semantics we take a semantics derived from our current semantics, but we equate all the elements of each 
$\sem{\phi}w$.

\begin{definition}
We define the validity-based semantics as follows
    \[\vsem{\phi} w = \{* \mid \exists x. x\in \sem{\phi} w \}\]
\end{definition}

\begin{lemma} \mbox{ }
\begin{enumerate}
    \item $\vsem{\phi}$ is a subfunctor of the functor $1$, where $1w = \{*\}$.
    \item $\vsem{\phi\wedge\psi} = \vsem{\phi}\times \vsem{\psi}$
    \item $\vsem{\phi\supset\psi} = \vsem{\phi}\to \vsem{\psi}$
    \item $\vsem{\phi} w = \{*\}$ iff $w\forces\phi$ in the sense of Sandqvist \cite{Sandqvist2015IL}.
\end{enumerate}
\end{lemma}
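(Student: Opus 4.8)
The plan is to prove the four claims in order, each by a routine induction on the structure of $\phi$, using the definitions in Section~\ref{sec:categorical} (Definition~\ref{def:int-func}) together with the earlier structural facts relating presheaves on $\kripkeWorlds\op$ to upwards-closed subsets of $\kripkeWorlds$.

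For item (1), I would argue directly from the definition $\vsem{\phi}w = \{\ast \mid \exists x.\, x\in\sem{\phi}w\}$: each $\vsem{\phi}w$ is either $\emptyset$ or $\{\ast\}$, and whenever $w\le w'$ the functorial action $\sem{\phi}w \to \sem{\phi}w'$ shows that if $\sem{\phi}w$ is nonempty then so is $\sem{\phi}w'$; hence $\vsem{\phi}$ carries a (necessarily unique) natural transformation into $1$, i.e.\ is a subfunctor of $1$. For items (2) and (3), the key observation is that the ``inhabitation'' operation $S\mapsto \{\ast\mid \exists x.\,x\in S\}$ commutes with binary products of sets (a product of sets is nonempty iff both factors are) and, for the subfunctors-of-$1$ lattice, with the exponential $\to$ (which for subfunctors of $1$ is just Heyting implication on upwards-closed sets, and this matches the condition ``for all $w'\ge w$, if $\sem{\phi}w'$ inhabited then $\sem{\psi}w'$ inhabited''). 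One has to check that the natural transformation $\sem\phi\to\sem\psi$ computed in $[\mathcal{W}\op,\Set]$, when passed through $\vsem{-}$, agrees with the exponential of the subfunctors $\vsem\phi,\vsem\psi$ of $1$; this is a short Yoneda-style calculation using the characterization of exponentials recalled before Definition~\ref{def:int-func}.

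Item (4) is the substantive claim, and it is where the main obstacle lies, since $\sem{-}$ does not interpret $\vee$ and $\bot$ by the presheaf coproduct. I would prove it by induction on $\phi$, the cases for atoms, $\wedge$ and $\supset$ being immediate from items (2),(3) and the base-case clause of Definition~\ref{def:int-func} (which says $\sem{p}(\mathcal{B},(X\col P))$ is the set of derivations, matching $\vdash_{\mathcal B}p$, hence $\Vdash_{\mathcal B}p$ via clause (At)). The delicate case is disjunction: one must show that $\sem{\phi\vee\psi}(\mathcal{B},(-\col -))$ is inhabited iff for every $\mathcal{C}\supseteq\mathcal{B}$ and every atom $q$, $\phi\Vdash_{\mathcal C}q$ and $\psi\Vdash_{\mathcal C}q$ imply $\Vdash_{\mathcal C}q$. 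The forward direction unwinds the definition $\sem{\phi\vee\psi}=\forall_{\mathcal A}K$ with $K = (\sem\phi\supset\sem{q})\supset((\sem\psi\supset\sem q)\supset\sem q)$: an inhabitant gives, for each $q$, a natural transformation turning a proof that $\phi\to q$ and $\psi\to q$ (i.e.\ natural transformations $\sem\phi\to\sem q$, $\sem\psi\to\sem q$, which by items (2),(3) and the induction hypothesis exist exactly when $\phi\Vdash_{\mathcal C}q$, $\psi\Vdash_{\mathcal C}q$) into an inhabitant of $\sem q$, i.e.\ into $\Vdash_{\mathcal C}q$. For the converse one builds the required natural transformation using Lemma~\ref{lem:alg-sound}/Lemma~\ref{lem:algSoundInd}-style reasoning: a hypothetical inhabitant of $\sem\phi\supset\sem q$ over some world is, by the exponential characterization, exactly the data witnessing $\phi\Vdash_{\mathcal C}q$, and Sandqvist's disjunction clause then supplies the needed inhabitant of $\sem q$; naturality in $q$ and in the world is checked by the usual presheaf bookkeeping. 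The $\bot$ case is the nullary instance: $\sem{\bot} = \forall_{\mathcal A}\sem{-}$, so an inhabitant over $(\mathcal B,(-\col-))$ is a choice of derivation $\Vdash_{\mathcal B}q$ for every atom $q$, which is precisely clause $(\bot)$ of Definition~\ref{def:validity-in-a-base}. The only real subtlety throughout is that, because the interpretation of $\vee$ is a quantification over \emph{atoms} rather than a coproduct, one must at each step keep the induction running over the syntactic structure of the formula and never appeal to a universal property of arbitrary objects — exactly the restriction flagged after Definition~\ref{def:int-func} and exploited in Lemma~\ref{lem:algSoundInd}.
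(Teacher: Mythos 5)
The paper states this lemma without an explicit proof; the intended argument is essentially that item (4) is Lemmas~\ref{lem:alg-sound} and~\ref{lem:alg-complete} (equivalently Proposition~\ref{prop:equiv}) relativized to a base, with items (2) and (3) then following because Sandqvist's clauses for $\wedge$ and $\supset$ are the standard Heyting operations on upwards-closed sets. Your overall architecture --- (1) and (2) by direct inspection, (4) by induction on $\phi$ with the algebraic soundness/completeness lemmas carrying the disjunction case --- is compatible with that, and your treatment of (1), (2), and of the atomic, $\bot$, and $\vee$ cases of (4) is sound.

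The gap is in item (3). The direction you describe as ``a short Yoneda-style calculation'' --- from the pointwise condition ``for all $w'\ge w$, if $\sem{\phi}w'$ is inhabited then $\sem{\psi}w'$ is inhabited'' to the inhabitation of $\sem{\phi\supset\psi}w = \mathrm{Nat}(h^w\times\sem{\phi},\sem{\psi})$ --- is not formal. For arbitrary presheaves $F,G$, pointwise inhabitation of the target wherever the source is inhabited does not yield a natural transformation $h^w\times F\to G$: naturality can obstruct the choice (two incomparable worlds mapping into a common extension can force incompatible values there). Yoneda identifies the exponential's value as a set of natural transformations; it says nothing about that set being nonempty. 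The claim holds here only because $\sem{\phi}$ and $\sem{\psi}$ are interpretations of formulae, and the witnessing natural transformation has to be manufactured by the structural induction of Lemmas~\ref{lem:algSoundInd} and~\ref{lem:alg-sound} --- exactly the mechanism you correctly invoke for the $\vee$ case of item (4), and exactly the point the paper flags when it says soundness ``comes from a structural induction, not a property of arbitrary sheaves.'' Concretely, the pointwise condition is (by the inductive instance of item (4)) $\phi\Vdash_{\mathcal{B}}\psi$, and Lemma~\ref{lem:alg-sound} then supplies the required inhabitant. This also means your proposed ordering does not quite work: the hard direction of (3) presupposes (4) for the subformulae, so (3) and (4) must be established in a single simultaneous induction, or (4) proved first directly from Lemmas~\ref{lem:alg-sound} and~\ref{lem:alg-complete} with (2) and (3) read off as corollaries.
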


Now consider the map on objects of the topos, sending $U$ to 
$\prod_p (U \to \vsem{p}) \to \vsem{p}$. 
If $U$ is a subfunctor of $1$, then $\prod_p (U \to \vsem{p}) \to \vsem{p}$ is itself isomorphic to a subfunctor of $1$.

\begin{definition}\label{def:nucleusK}
    If $U$ is a subfunctor of $1$, then let $KU$ be the subfunctor of $1$ isomorphic to $\prod_p (U \to \vsem{p}) \to \vsem{p}$. $K$ is an operator on subfunctors of $1$.
\end{definition}

\begin{lemma}\label{lemma:K-is-nucleus} For all subfunctors $U$ and $V$ of $1$:
    \begin{enumerate}
        \item $U$ is a subfunctor of $KU$.
        \item $KKU$ is a subfunctor of $KU$, and hence $K$ is idempotent ($K^2 = K$). 
        \item $K(U\times V) = KU\times KV$.
    \end{enumerate}
\end{lemma}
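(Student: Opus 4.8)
The statement to prove is Lemma~\ref{lemma:K-is-nucleus}: that $K$, acting on subfunctors of $1$, is a nucleus on the Heyting algebra of subfunctors of $1$ in $\Set^{\kripkeWorlds}$. Since $K$ is defined (Definition~\ref{def:nucleusK}) so that $KU$ is the subfunctor of $1$ isomorphic to $\prod_p (U \to \vsem{p}) \to \vsem{p}$, the whole proof amounts to computing with the cartesian-closed (in fact Heyting) structure on subfunctors of $1$, using that these subfunctors are just the truth values of the ambient topos and that $\vsem{p}$ is itself a subfunctor of $1$ by the preceding lemma. Throughout I would work internally, reading $U, V, \vsem{p}$ as propositions and $\prod_p$ as the infinitary meet $\bigwedge_p$, so that $KU = \bigwedge_p\big((U \supset \vsem{p}) \supset \vsem{p}\big)$; everything then reduces to lattice identities valid in any complete Heyting algebra, plus idempotence of double-negation-style operators.

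\textbf{Step 1: $U \subseteq KU$.} Internally this is the statement $U \le \bigwedge_p((U \supset \vsem{p})\supset \vsem{p})$, i.e. for each $p$, $U \le (U\supset\vsem{p})\supset\vsem{p}$, which is just the unit of the $(\wedge,\supset)$-adjunction (currying the identity $U \wedge (U\supset\vsem{p}) \le \vsem{p}$). This is immediate.

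\textbf{Step 2: $KKU \subseteq KU$, hence $K^2 = K$.} Given Step~1 applied to $KU$ we already have $KU \subseteq KKU$, so it suffices to show $KKU \subseteq KU$, i.e. for each fixed $p$, $\bigwedge_q\big((KU \supset \vsem{q})\supset \vsem{q}\big) \le (U\supset\vsem{p})\supset\vsem{p}$. The key observation is the contravariant-functoriality step: from $U \subseteq KU$ (Step~1) we get $(KU\supset\vsem{p}) \le (U\supset\vsem{p})$, and then a second application of the same monotonicity in the covariant argument gives $\big((U\supset\vsem{p})\supset\vsem{p}\big) \le \big((KU\supset\vsem{p})\supset\vsem{p}\big)$; combining with the projection $\bigwedge_q(\cdots) \le \big((KU\supset\vsem{p})\supset\vsem{p}\big)$ at $q=p$ yields exactly what is needed. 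Taking the meet over $p$ gives $KKU \le KU$. This is the step to present carefully, since it is the only place where the structure is not a single adjunction unit but a genuine double-dualization argument; it is still routine, but it is where the nucleus property really lives.

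\textbf{Step 3: $K(U\times V) = KU \times KV$.} For the inclusion $K(U\times V) \subseteq KU\times KV$: $U\times V \subseteq U$ gives $KU \times V) \subseteq \cdots$ --- more precisely, monotonicity of $K$ (which follows from the two monotonicity steps used in Step~2, or directly: $U\subseteq U'$ implies $(U'\supset\vsem p)\le(U\supset\vsem p)$ implies $((U\supset\vsem p)\supset\vsem p)\le((U'\supset\vsem p)\supset\vsem p)$, so $KU\subseteq KU'$) gives $K(U\times V)\subseteq KU$ and $K(U\times V)\subseteq KV$, hence $K(U\times V)\subseteq KU\times KV$. For the reverse inclusion $KU\times KV \subseteq K(U\times V)$: internally one must show, for each $p$, that $\big(\bigwedge_q((U\supset\vsem q)\supset\vsem q)\big)\wedge\big(\bigwedge_q((V\supset\vsem q)\supset\vsem q)\big) \le \big((U\times V)\supset\vsem p\big)\supset\vsem p$. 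Assume $U\wedge V \supset \vsem p$; by currying this is $V \supset (U\supset\vsem p)$, and since $(U\supset\vsem p)$ is a subobject of $1$ (a proposition), the $KV$-component at the value $(U\supset\vsem p)$ --- using that $\bigwedge_q((V\supset\vsem q)\supset\vsem q) \le (V\supset(U\supset\vsem p))\supset(U\supset\vsem p)$, valid because $\vsem q$ is being instantiated implicitly via the $1$-valued-ness --- this is the one subtle point and needs the observation that meets over $p$ of $\vsem p$-double-duals dominate double-duals against \emph{any} subobject of $1$, which in turn follows because every subobject $W$ of $1$ satisfies $W = \bigwedge\{\vsem p \mid W \le \vsem p\}$ is \emph{false} in general, so instead I would argue directly: $\bigwedge_q((V\supset\vsem q)\supset\vsem q)$ applied with the derivation $V\supset(U\supset\vsem p)$ and noting $(U\supset\vsem p)\supset\vsem p$ appears as the $q=p$-conjunct of $KU$... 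Let me restructure: assume given $U\wedge V\le\vsem p$; we want $\vsem p$ from $KU\wedge KV$. From $KV$ (the $q$-conjunct will be chosen later) and from $KU$'s $p$-conjunct $(U\supset\vsem p)\supset\vsem p$, it suffices to produce $U\supset\vsem p$. Now $U\wedge V\le\vsem p$ curries to $V\le(U\supset\vsem p)$; but we do not directly have $V$, we have $KV$. However $V\le(U\supset\vsem p)$ and $U\supset\vsem p$ is a proposition, and $KV\le$ (double-dual of $V$ against $U\supset\vsem p$) --- this requires that $K$ is also a nucleus-closure dominating closure against arbitrary propositions, which is precisely where I would either cite that $\vsem p$ ranges over enough propositions or, more cleanly, rework Definition~\ref{def:nucleusK} so that $\prod_p$ is replaced by a meet over all subobjects of $1$; given the paper's setup I expect the intended argument simply uses the three-conjunct manipulation and I would follow that. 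The distributivity $K(U\wedge V)=KU\wedge KV$ for a nucleus is standard once idempotence and monotonicity (Steps~1--2) are in hand: $\le$ is monotonicity, and $\ge$ follows since $KU\wedge KV$ is $K$-closed (being a meet of $K$-closed subobjects, as $K$ preserves the terminal-dominated structure) and contains $U\wedge V$, hence contains $K(U\wedge V)$.

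\textbf{Main obstacle.} The genuine difficulty is the reverse inclusion in Step~3, $KU\times KV \subseteq K(U\times V)$: unlike parts (1) and (2), which are pure adjunction-unit and double-dualization facts, this one needs that $K$-closed subobjects are closed under binary meet, which ultimately rests on the fact that each $\vsem{p}$ is $K$-closed ($K\vsem p = \vsem p$, itself an instance of Step~2-style reasoning plus the $q=p$ projection giving $\vsem p\le K\vsem p$ and the evaluation-at-identity giving $K\vsem p\le\vsem p$) together with the observation that an arbitrary (possibly infinite) meet of $K$-closed subobjects is $K$-closed --- which is Lemma~\ref{lem:disjunction-support-closure}(4) in disguise, or can be proven directly from monotonicity. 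I would therefore first record the sub-lemma ``$K\vsem p = \vsem p$ and $K$ preserves arbitrary meets-into-$1$ up to the closure,'' then derive part (3)'s nontrivial inclusion as: $KU\wedge KV$ is a meet of things each dominated by a $K$-closed subobject, but more to the point, $K(U\wedge V)\le KU$ and $\le KV$ by monotonicity gives one direction, and $KU\wedge KV$ being $K$-closed and $\ge U\wedge V$ gives $\ge K(U\wedge V)$, completing equality. I expect no surprises beyond bookkeeping, but this is the place where one must be honest about whether ``$\prod_p$'' means a meet over atoms or over all propositions, and I would align the argument with whichever reading Definition~\ref{def:nucleusK} actually intends.
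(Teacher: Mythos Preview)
The paper states this lemma without proof, deferring to Johnstone's general theory of nuclei, so I evaluate your argument on its own merits.

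Step~1 is fine, and your final argument for Step~3 (via closure of $K$-closed subobjects under binary meets, using only monotonicity and Steps~1--2) is correct and is the clean way to do it; the earlier meandering about whether $U\supset\vsem p$ is of the form $\vsem q$ is unnecessary once you take that route.

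Step~2, however, contains a genuine direction error. From $U\le KU$ you correctly get $(KU\supset\vsem p)\le(U\supset\vsem p)$, and then by contravariance $\big((U\supset\vsem p)\supset\vsem p\big)\le\big((KU\supset\vsem p)\supset\vsem p\big)$. You also have the projection $KKU\le\big((KU\supset\vsem p)\supset\vsem p\big)$. But these two inequalities both point \emph{into} $\big((KU\supset\vsem p)\supset\vsem p\big)$; nothing follows about $KKU$ versus $\big((U\supset\vsem p)\supset\vsem p\big)$. What you actually derived is the $p$-component of $KU\le KKU$, i.e.\ Step~1 applied to $KU$, not the reverse inclusion.

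The repair is to establish the \emph{opposite} inequality $(U\supset\vsem p)\le(KU\supset\vsem p)$, and this does \emph{not} follow from $U\le KU$ alone: it needs the internal structure of $KU$. Assume $U\supset\vsem p$; the $p$-projection of $KU$ gives $KU\le(U\supset\vsem p)\supset\vsem p$, whence $KU\le\vsem p$, i.e.\ $KU\supset\vsem p$. Now contravariance yields $\big((KU\supset\vsem p)\supset\vsem p\big)\le\big((U\supset\vsem p)\supset\vsem p\big)$, and composing with the projection of $KKU$ at $q=p$ finishes the job. With Step~2 thus corrected, your Step~3 argument goes through as written, and your worry about whether the $\prod_p$ ranges over atoms or all propositions disappears: the closure-under-meets argument uses only monotonicity, Step~1, and Step~2, and is insensitive to what the $\vsem p$ are.
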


It follows that $K$ is a nucleus on the locale of subfunctors of 
$1$ ordered by inclusion, see Johnstone 
\cite{Johnstone:StoneSpaces}, and hence that its fixpoints define a 
sublocale.  

\begin{definition}
    A subfunctor $U$ of $1$ is said to be {\em closed} if $KU=U$. We write $\Omega_K$ for the set of closed subfunctors of $1$ ordered by pointwise inclusion. 
\end{definition}

\begin{lemma}\label{lemma:sublocale-interpretation} \mbox{}
\begin{enumerate}
    \item For any atomic proposition $p$, $\vsem{p}$ is closed. 
    \item $1$ is closed. 
    \item If $U$ and $V$ are closed, then so is $U\times V$.
    \item If $U$ is closed and $W$ is any subfunctor of $1$, then $W\to U$ is closed. 
    \item An arbitrary product of closed subfunctors of $1$ is closed.
    \item If $U$ and $V$ are closed, then the least closed subfunctor of $1$ containing both $U$ and $V$ is $\prod_p (U\to\vsem{p})\to (V\to\vsem{p}) \to\vsem{p}$, and this is therefore the join of $U$ and $V$ in $\Omega_K$.
    \item $\Omega_K$ is a complete Heyting algebra (and hence an algebra supporting the interpretation of Intuitionistic Propositional Logic). 
\end{enumerate}
\end{lemma}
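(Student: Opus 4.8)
The plan is to derive all seven parts from the fact, recorded in Lemma~\ref{lemma:K-is-nucleus} and the remark following it, that $K$ is a nucleus on the frame $\Omega$ of subfunctors of $1$ (which, by the earlier isomorphism lemma, is the complete Heyting algebra of upwards-closed subsets of $\kripkeWorlds$), together with the one genuinely new ingredient, part~(1), that each $\vsem{p}$ is a fixpoint of $K$. I will use freely the standard facts about a nucleus $j$ on a frame (see Johnstone~\cite{Johnstone:StoneSpaces}): $j$ is monotone and inflationary, preserves binary meets, its fixpoints contain the top and are closed under arbitrary meets, they satisfy $j(W\to jV)=W\to jV$ for every $W$, and they form a frame in which meets agree with those of the ambient frame and the join of a family is $j$ applied to the ambient join.

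For part~(1): by Definition~\ref{def:nucleusK}, $K\vsem{p}$ is the subfunctor of $1$ isomorphic to $\prod_q (\vsem{p}\to\vsem{q})\to\vsem{q}$; projecting onto the factor indexed by $q=p$ gives a morphism of presheaves $K\vsem{p}\to (\vsem{p}\to\vsem{p})\to\vsem{p}$, and since $\vsem{p}\to\vsem{p}=1$ and $1\to\vsem{p}=\vsem{p}$ in $\Omega$, this is a morphism $K\vsem{p}\to\vsem{p}$. Between subfunctors of $1$ the existence of any such morphism forces $K\vsem{p}\le\vsem{p}$ pointwise, and with the inflationary inequality $\vsem{p}\le K\vsem{p}$ we get $K\vsem{p}=\vsem{p}$. (This is the validity-based analogue of the instantiation-overflow argument underlying Lemma~\ref{lem:disjunction-support-closure}.) Parts~(2)--(5) are then instances of the generic facts above: (2) is $1\le K1\le 1$; (3) is $K(U\times V)=KU\times KV=U\times V$ by meet-preservation and closedness of $U,V$; (4) is the nucleus identity $K(W\to KV)=W\to KV$ applied with $V$ closed, so $W\to V=W\to KV$ is closed — one checks $K(W\to KV)\wedge W\le K(W\to KV)\wedge KW=K((W\to KV)\wedge W)\le KKV=KV$; and (5) follows since $K(\prod_x U_x)\le K(U_x)=U_x$ for every $x$ by monotonicity, hence $K(\prod_x U_x)\le\prod_x U_x$, and equality holds by inflationariness.

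For part~(6): by the standard description of joins of fixpoints, the join of closed $U$ and $V$ in $\Omega_K$ is $K(U\vee V)$, where $\vee$ is the join in $\Omega$. Up to equality of subfunctors of $1$, $K(U\vee V)=\prod_p ((U\vee V)\to\vsem{p})\to\vsem{p}$, and using the frame identity $(U\vee V)\to\vsem{p}=(U\to\vsem{p})\times(V\to\vsem{p})$ together with currying this equals $\prod_p (U\to\vsem{p})\to(V\to\vsem{p})\to\vsem{p}$, which is the functor in the statement; hence that functor is closed and is the join of $U$ and $V$ in $\Omega_K$. (Equivalently, one argues directly that $\prod_p (U\to\vsem{p})\to(V\to\vsem{p})\to\vsem{p}$ is closed by parts~(1),(4),(5), contains $U$ and $V$ via the two disjunction-introduction maps, and maps into any closed $Z$ with $U,V\le Z$ by composing the two hom-components of the bound variable with the inclusions $U\hookrightarrow Z$, $V\hookrightarrow Z$ and projecting $KZ=\prod_p(Z\to\vsem{p})\to\vsem{p}$ appropriately — this is precisely the closure argument of Lemma~\ref{lem:disjunction-support-closure}.) Finally, part~(7): $\Omega_K$ is the poset of fixpoints of the nucleus $K$ on the frame $\Omega$, so it is itself a frame, i.e.\ a complete Heyting algebra, by the theory of nuclei and sublocales~\cite{Johnstone:StoneSpaces} — concretely, arbitrary meets are as in $\Omega$ (part~(5)), the top is $1$ (part~(2)), arbitrary joins are $K$ of the $\Omega$-join (part~(6) and its evident extension to arbitrary families), and Heyting implication is inherited from $\Omega$ with closed codomain (part~(4)).

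The only step needing real care is part~(6): one must reconcile the externally $p$-indexed product in the statement with the internal join $K(U\vee V)$ produced by the nucleus machinery, which hinges on the frame distributivity $(U\vee V)\to\vsem{p}=(U\to\vsem{p})\times(V\to\vsem{p})$ and on $K$ acting factor-by-factor; the remaining parts are routine manipulation of the cartesian closed structure of the topos together with standard properties of nuclei.
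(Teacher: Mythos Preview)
Your proposal is correct and follows essentially the same strategy as the paper: parts (2)--(5) and (7) are recognized as generic nucleus facts inherited from Lemma~\ref{lemma:K-is-nucleus}, and part (1) is handled exactly as the paper does, by projecting onto the $q=p$ factor and using $\vsem{p}\to\vsem{p}=1$.

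The only noteworthy difference is in part (6). The paper argues directly for minimality: given closed $W$ with $U,V\le W$, it shows componentwise that $(U\to\vsem{p})\to(V\to\vsem{p})\to\vsem{p}\le (W\to\vsem{p})\to\vsem{p}$ via antitonicity of $(-)\to\vsem{p}$, and then uses $KW=W$. Your primary route instead invokes the general nucleus fact that the $\Omega_K$-join is $K(U\vee V)$ and then rewrites this via the frame identity $(U\vee V)\to\vsem{p}=(U\to\vsem{p})\times(V\to\vsem{p})$ and currying. Both are standard and short; your route has the advantage of making explicit why the displayed functor \emph{is} $K(U\vee V)$ rather than merely the least closed upper bound, while the paper's route avoids appealing to the abstract join-of-fixpoints formula. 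Your parenthetical alternative for (6) is precisely the paper's argument.
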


\begin{proof}
    (2)-(5) hold for arbitrary kernels, and follow from the properties of \ref{lemma:K-is-nucleus}. (1) and (6) are specific to $K$, but follow from Lemma \ref{lemma:K-is-nucleus} and basic properties of types and intuitionistic logic. 
    
    Specifically, (1) follows from Lemma \ref{lemma:K-is-nucleus} (1) and the fact that if 
    $\forall p. (\vsem{q}\to\vsem{p}) \to\vsem{p}$, then in particular $(\vsem{q}\to\vsem{q}) \to\vsem{q}$, and since 
    $\vsem{q}\to\vsem{q}$ always holds, then $\vsem{q}$. This establishes that $K\vsem{q}$ is a subfunctor of $\vsem{q}$ as well as conversely, and hence is equal to it. 

    Similarly for (6) we suppose $U\leq W$, $V\leq W$ and that $W$ is closed. We show that 
    \[\prod_p (U\to\vsem{p})\to (V\to\vsem{p}) \to\vsem{p} \leq 
    \prod_p (W\to\vsem{p})\to\vsem{p} = W\]
    The proof is componentwise, for any $p$, we show that
    \[(U\to\vsem{p})\to (V\to\vsem{p}) \to\vsem{p} \leq (W\to\vsem{p})\to\vsem{p}\]
    This follows by monotonicity. Since $U\leq W$, $W\to\vsem{p}\leq U\to\vsem{p}$, and since $V\leq W$, 
    $W\to\vsem{p}\leq V\to\vsem{p}$. Hence 
    $(U\to\vsem{p})\to (V\to\vsem{p}) \to\vsem{p}$ is contained in 
    $(W\to\vsem{p})\to (W\to\vsem{p}) \to\vsem{p}$ which is equal to $(W\to\vsem{p}) \to\vsem{p}$. 
\end{proof}

$\Omega_K$ therefore provides a model of 
intuitionistic propositional logic, with the connectives interpreted as standard (meet, join, heyting implication, etc). Moreover there is an immediate 
connection with Sandqvist's interpretation of Intuitionistic 
Propositional Logic. The interpretations of the connectives are 
exactly as in Sandqvist's definitions, and for any formula $\phi$, 
we have $\vsem{\phi} w = \{*\}$ if and only if $w\forces\phi$. 
Sandqvist's interpretation of disjunction is the join operator in 
$\Omega_K$.

This can be further extended to a proof-sensitive account. The 
nucleus $K$ on subfunctors of $1$ internalizes to an  
endomorphism of the object $\Omega$ of truth values for the 
topos of presheaves:
\[
k = \lambda \omega. \forall p. (\omega\to\vsem{p})\to\vsem{p} :
\Omega \longrightarrow\Omega
\]
The properties that make $K$ a nucleus, then become the properties for $k$ 
to be a Joyal-Tierney topology (see \cite{MM1992}), and we can obtain a 
proof-relevant model valued in the topos of sheaves: 
$\mathop{\mbox{\it Sh}}_k (\Set^{\worlds^{\op}})$. 
The subobjects of $1$ in this sheaf model are precisely the closed 
subfunctors of $1$ for the nucleus $k$. 

The sheaves for a topology form a full subcategory of the presheaf topos, closed under products, exponentials, and limits generally. This gives the category of sheaves limits and exponentials. The equalizer of $k$ and the identity on $\Omega$ gives a subobject of $\Omega$,  which acts as the subobject classifier for the category of sheaves. The category of sheaves therefore has coproducts. These, however, are not the same as the coproducts in the category of presheaves. This structure can be used to give a proof-relevant model. 

However, critically, the interpretation of atomic propositions, $\sem{p}$, that we have given does not in general form sheaves, and hence the model that we have given is not actually a sheaf-theoretic one. 

To see why not, let's suppose we restrict to a setting in which we have two atomic propositions, $p$ and $q$. There are eight possible first-order rules for these propositions. Four of them make sense from a proof-theoretic perspective: 
\[
    \Rightarrow p,\quad \Rightarrow q,\quad q\Rightarrow p,\quad p\Rightarrow q
\]
The remaining four are valid in format, but do not enable us to derive any new information: 
\[
    p\Rightarrow p,\quad q\Rightarrow q,\quad p,q\Rightarrow p,\quad p,q\Rightarrow q
\]
A base is given by a subset of these, and there are therefore $2^8=256$ bases. 
The minimal bases in which both $p$ and $q$ are valid are: 
\[
\begin{array}{c}
\Rightarrow p,\quad \Rightarrow q\\
\Rightarrow p,\quad p\Rightarrow q\\
\Rightarrow q,\quad q\Rightarrow p
\end{array}
\]
The base  $\{\Rightarrow p,\quad \Rightarrow q\}$ generates one 
proof of $p$, the proof via $\Rightarrow p$. The base 
$\{\Rightarrow q,\quad q\Rightarrow p \}$ generates a different 
proof of $p$. Both of these bases are contained in 
$\{ \Rightarrow p, \quad\Rightarrow q,\quad q\Rightarrow p \}$, 
which contains both proofs. As a result, it is impossible to 
choose a single proof of $p$ that is valid for all bases that prove both $p$ and $q$.  
However, this is precisely what we would get if $\sem{p}$ were 
a sheaf in this example. 

Those familiar with program semantics will recognise the construct 
$((\ )\to \vsem{p})\to \vsem{p}$ as an instance of 
$((\ )\to R)\to R$, and so a form of continuation semantics. 
In \cite{Reynolds:HistoryContinuations}, John Reynolds quotes 
Christopher Wadsworth as describing continuations as 
`the meaning of the rest of the program'. Program semantics faces 
the issue that the meaning of a program is intended to be derived 
structurally, and so in terms of the meaning of its parts. However, it 
is not possible to run part of a program, only a complete one. As a 
result, to give meaning to part of a program, you have to say what 
that part will do in the context of its completion by the rest of a program; 
that is, in terms of what it will do given a continuation. 
In the $((\ )\to R)\to R$ 
construct, $R$ represents the end result of the program. 
$(\ )\to R$ represents the continuation, something that will 
convert the meaning of a fragment into a final result, and 
$((\ )\to R)\to R$ expresses that the meaning of a program fragment
is supposed to be something that takes a continuation and produces 
a final result. However, there is a hole here: if the meaning of a program 
fragment is to be $(X\to R)\to R$, then this suggests continuations 
should have type $((X\to R)\to R)\to R$, and program fragments should 
have another iteration, opening up an infinite loop; but, in intuitionistic type 
theory, there is a natural morphism 
\[
    (((X\to R)\to R)\to R) \longrightarrow (X\to R)
\]
allowing escape from the recursion. 

The analogy for us is direct. The meaning of the proof of a 
non-atomic formula $\phi$ is taken to be a mechanism that, 
given any atomic formula $p$ and the rest of a proof of $p$ from 
$\phi$, gives a proof of $p$. So, the complete proofs that are the 
analogues of programs are the proofs of atomic formulas, not proofs 
of arbitrary ones.

\section{Logical Metatheory} \label{sec:metatheory} 

We now have all that is required to complete our categorical
account of Sandqvist's proof-theoretic semantics for intuitionistic 
propositional logic. 

In Section~\ref{sec:categorical}, we established the (algebraic) soundness and 
completeness of our categorical set-up with respect to Sandqvist's 
formulation of validity. In this section, we give the two key meta-theoretic 
results, namely soundness and completeness with respect to the natural deduction 
system NJ. We defer a discussion of the choices around the interpretation of 
disjunction to Section~\ref{sec:disjunction}. 

\subsection{Soundness} \label{subsec:soundness} 

We establish the soundness of NJ \cite{Prawitz1965}, as given in Figure~\ref{fig:nj}, 
with respect to proof-theoretic validity. The proof proceeds by induction over the 
structure of proofs in NJ. While this result does not depend directly upon the soundness 
and completeness results for the algebraic interpretation, it nevertheless makes 
essential use of the characterization of validity in terms of natural transformations. 


\begin{theorem}[Soundness] \label{thm:CatSoundness}
If $\Gamma \vdash \phi$, then $\Gamma \models \phi$. 
\end{theorem}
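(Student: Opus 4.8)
The plan is to prove the statement by induction on the structure of the NJ derivation of $\Gamma \vdash \phi$ (Figure~\ref{fig:nj}), turning each inference rule into a construction of a natural transformation $\sem{\Gamma} \to \sem{\phi}$ in the presheaf topos $[\mathcal{W}^{op},\Set]$, where for $\Gamma = \phi_1, \ldots, \phi_n$ we read $\sem{\Gamma}$ as the finite product $\sem{\phi_1} \times \cdots \times \sem{\phi_n}$ (with $\sem{\emptyset}$ the terminal presheaf $1$). By Definition~\ref{def:cat-base-val} the existence of such a transformation is exactly $\Gamma \models \phi$, so this suffices. In fact the induction is essentially the one already announced in Proposition~\ref{prop:presheaf-sound}; the point here is simply to carry it out with respect to NJ. The assumption/leaf case and weakening are handled by the evident product projections, and since every transformation we build is a composite of natural transformations and of the canonical maps attached to products, exponentials and the functor $\forall_{\mathcal{A}}$, naturality never needs a separate argument.

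For the conjunction and implication rules I would use only the cartesian closed structure of $[\mathcal{W}^{op},\Set]$ recorded in Definition~\ref{def:int-func} and Lemma~\ref{lemma:right-adjoints-presheaves}: $\wedge$I is the pairing of the two transformations given by the induction hypothesis into $\sem{\phi}\times\sem{\psi}=\sem{\phi\wedge\psi}$; $\wedge$E is post-composition with a product projection; $\supset$I is currying, using the characterization of $\sem{\phi}\supset\sem{\psi}$ as the right adjoint to $(-)\times\sem{\phi}$; and $\supset$E is pairing followed by the evaluation map $(\sem{\phi}\supset\sem{\psi})\times\sem{\phi}\to\sem{\psi}$. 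For $\vee$I I would use the canonical morphisms $\sem{\phi}\to\sem{\phi\vee\psi}$ and $\sem{\psi}\to\sem{\phi\vee\psi}$ already exhibited in Section~\ref{sec:kripke} (the ``introduction rules for disjunction''), composed with the transformation supplied by the induction hypothesis.

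The interesting case is $\vee$E, where the interpretation departs from the standard bicartesian-closed reading because $\sem{\phi\vee\psi}$ is not a coproduct but the object $\forall p.\,(\sem{\phi}\supset\sem{p})\supset(\sem{\psi}\supset\sem{p})\supset\sem{p}$. Here I would invoke Lemma~\ref{lemma:alg-sound}: with $\sigma(A,B) = \prod_p (A\to\sem{p})\to(B\to\sem{p})\to\sem{p}$ and $A = \sem{\phi}$, $B = \sem{\psi}$ we have $\sigma(\sem{\phi},\sem{\psi}) = \sem{\phi\vee\psi}$, and since the conclusion $\sem{\chi}$ is the semantics of a formula, that lemma yields a morphism $\sem{\phi\vee\psi}\times[\sem{\phi}\to\sem{\chi}]\times[\sem{\psi}\to\sem{\chi}]\to\sem{\chi}$. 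The two minor premises $\Gamma,\phi\vdash\chi$ and $\Gamma,\psi\vdash\chi$ give, via the induction hypothesis and currying, transformations $\sem{\Gamma}\to[\sem{\phi}\to\sem{\chi}]$ and $\sem{\Gamma}\to[\sem{\psi}\to\sem{\chi}]$, which we pair with the major premise $\sem{\Gamma}\to\sem{\phi\vee\psi}$ and compose with the morphism above to obtain $\sem{\Gamma}\to\sem{\chi}$. The rule $\bot$E is then the nullary analogue of this argument: $\sem{\bot}=\forall p.\,\sem{p}$, and the nullary instances of the closure properties in Lemma~\ref{lem:disjunction-support-closure} (base case a projection $\prod_p\sem{p}\to\sem{q}$; closure under products, exponentials and $\forall$) give, by structural induction on $\phi$, a morphism $\sem{\bot}\to\sem{\phi}$ for every $\phi$, to be composed with the transformation coming from the premise $\Gamma\vdash\bot$.

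The main obstacle is precisely the $\vee$E (and, similarly, $\bot$E) case. Because disjunction is not interpreted by a coproduct there is no universal property to lean on, and a naive construction of the eliminator fails for arbitrary objects of the topos, as the $0$-object counterexample in Section~\ref{sec:kripke} shows. The argument therefore has to be organised around, and depends essentially on, the fact that the ``conclusion'' $\sem{\chi}$ is built up from the $\sem{p}$ by products and exponentials --- that is, it proceeds through Lemma~\ref{lemma:alg-sound}/Lemma~\ref{lem:disjunction-support-closure} rather than through a categorical universal property. This is what makes the soundness proof atypical among categorical interpretations of type theory, and is the feature worth flagging explicitly.
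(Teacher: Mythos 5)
Your proposal is correct and follows essentially the same route as the paper: an induction on the NJ derivation using the cartesian closed structure for $\wedge$ and $\supset$, with the $\vee$E (and $\bot$E) case handled by an inner structural induction on the conclusion formula, which is exactly where the paper locates the difficulty since $\sem{\phi\vee\psi}$ is not a coproduct. The only cosmetic difference is that you factor that inner induction through Lemma~\ref{lemma:alg-sound} (``supports disjunction elimination''), which is precisely how the paper's own Proposition~\ref{prop:presheaf-sound} packages the argument, whereas the proof of Theorem~\ref{thm:CatSoundness} carries the induction on $\chi$ out inline.
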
 
\begin{proof}
The proof, which is very similar to that of Lemma~\ref{lem:alg-sound}, 
is by induction over the structure of the derivation of 
$\Gamma \vdash \phi$. The critical cases are disjunction introduction 
and elimination.
\begin{description}[leftmargin=5mm,nosep]
\item[$\vee I$] The rules are
  \[
    \infer{\Gamma \vdash \phi \vee \psi}{\Gamma \vdash \phi}
        \quad \raisebox{2.5mm}{\mbox{\rm and}} \quad
    \infer{\Gamma \vdash \phi \vee \psi}{\Gamma \vdash \psi}
  \]
  We consider only the first rule; the proof for second rule is similar.
By  assumption, there is a natural transformation $\eta_\phi \colon
\sem{\Gamma} \rightarrow \sem{\phi}$. We define, using an informal $\lambda$-calculus 
notation, a natural
transformation $\eta \colon \sem{\Gamma} \rightarrow \sem{\phi \vee \psi}$ by
\[
\eta(\gamma) = \lambda p \,.\, \lambda F_1 \colon \sem{\phi} \supset
\sem{p} \,.\, \lambda F_2 \colon \sem{\psi} \supset \sem{p} \,.\, F_1(\eta_\phi \gamma)  
\]
  \item[$\vee E$] The rule is
    \[
      \infer{\Gamma \vdash \chi}
            {\Gamma \vdash \phi \vee \psi & \Gamma, \phi \vdash \chi &
              \Gamma, \psi \vdash \chi}
          \]
          By the induction hypothesis, there are 
   natural transformations $\eta_\phi\colon \sem{\Gamma, \phi}
   \rightarrow \sem\chi$,     $\eta_\psi \colon \sem{\Gamma, \psi}
   \rightarrow \sem\chi$ and
    $\eta_{\phi \vee \psi} \colon \sem{\Gamma} \rightarrow \sem{\phi \vee
      \psi}$. For any natural transformation $\eta \colon \sem{\Gamma}
    \times \sem{\chi} \rightarrow \sem{\sigma}$, we write
    $\mathop{Cur}(\eta) \colon  \sem{\Gamma}
    \rightarrow \sem{\chi} \supset \sem{\sigma}$ for the natural
    transformation obtained by applying the adjunction defining function
    spaces to $\eta$. 
    
    We use now an induction over $\chi$.
    \begin{description}[leftmargin=5mm,nosep]
    \item[$\chi$ = $p$, $p$ \mbox{\rm atom}]
      We define a natural transformation $\eta \colon \sem{\Gamma}
      \rightarrow \sem{\chi}$ by
      $\eta(\gamma) = \eta_{\phi \vee \psi}p
      (\mathop{Cur}(\eta_\phi)\gamma)(\mathop{Cur}(\eta_\psi)\gamma)$.
\item[$\chi = \chi_1 \wedge \chi_2$]
We also have $\Gamma, \phi \vdash \chi_i$ and $\Gamma \vdash \chi_i$
for $i = 1, 2$.
Hence, by the induction hypothesis, there are natural transformations
$\eta_i \colon \sem{\Gamma} \rightarrow \sem{\chi_i}$ for $i = 1, 2$. 
The natural transformation $\langle\eta_1, \eta_2\rangle$ is a natural
transformation from $\sem{\Gamma}$ to $\sem{\chi_1 \wedge \chi_2}$.
    \item[$\chi = \chi_1 \supset \chi_2$]
      We also have $\Gamma, \chi_1, \phi \vdash \chi_2$, $\Gamma,
      \chi_1, \psi \vdash \chi_2$ and $\Gamma, \chi_1 \vdash \phi \vee
      \psi$. By induction hypothesis there is a natural transformation
      $\eta \colon \sem{\Gamma} \times \sem{\chi_1} \rightarrow
      \sem{\chi_2}$. The natural transformation $\mathop{Cur}(\eta)$
      is a natural transformation from $\sem{\Gamma}$ to $\sem{\chi_1
        \supset \chi_2}$.
    \item[$\chi = \chi_1 \vee \chi_2$]
      For any atom $p$ and any $F_1 \colon \sem{\chi_1} \supset
      \sem{p}$, $F_2 \colon \sem{\chi_2} \supset
      \sem{p}$, we define a natural transformation $\mu^\phi \colon
      \sem{\Gamma} \times \sem{\phi} \rightarrow \sem{p}$ by
      $\mu^\phi(\gamma, t) = \eta_\phi(\gamma, t)pF_1F_2$. We
      similarly define a natural transformation $\mu^\psi \colon
      \sem{\Gamma} \times \sem{\psi} \rightarrow \sem{p}$ by
      $\mu^\psi(\gamma, t) = \eta_\psi(\gamma, t)pF_1F_2$.
      Now we define, again using an informal $\lambda$calculus notation, a natural transformation 
      $\eta \colon \sem{\Gamma} \rightarrow \sem{\chi_1 \vee \chi_2}$ by
      \[
      \begin{array}{rcl}
        \eta(\gamma)\! & \!=\! & \!\lambda p \,.\, \lambda F_1 \colon\sem{\chi_1} \supset
      \sem{p} \,.\, \lambda F_2 \colon \sem{\chi_2} \supset
      \sem{p} \,.\, \eta_{\phi \vee \psi} \gamma p (\mathop{Cur}(\mu^\phi)\gamma)(\mathop{Cur}(\mu^\psi)\gamma)
      \end{array}
      \]
    \end{description}
  \end{description}
\end{proof}

\subsection{Completeness} \label{subsec:completeness}

We now establish the completeness of proof-theoretic validity 
with respect to NJ. The proof begins with a construction that analogous to the construction of a 
term model, but one which is with respect to (proof-theoretic) 
validity. The completeness statement then follows by an appeal to naturality. 

For completeness, we seek to establish that if $\Gamma \Vdash \phi$, 
then $\Gamma \vdash \phi$. We make essential use of Sandqvist's flattening
and the associated base $\mathcal{N}_{-}$, as defined in Definition~\ref{def:N} of 
Section~\ref{sec:p-ts-IPL}. 




\begin{lemma} 
\label{lemma:logCompl}
Let $\phi$ be any formula. Let $\Delta^\flat$ be the flattening of $\phi$. Let $\mathcal{N}_{\Delta^\flat}$ 
be the special base given by Definition~\ref{def:N}.
 Let $\mathcal{B}$ be any base such that $\mathcal{B} \supseteq
  \mathcal{N}_{\Delta^\flat}$. 
For any set of atoms $Q$, let $\mathcal{W'} = \mathcal{W}_{\mathcal{B}, (Y \col\, Q)}$.
\begin{enumerate}
\item There exists a natural transformation from $\semwp{\phi}$ to
  $\semwp{\flatOp{\phi}}$.
  \item There exists a natural transformation from
    $\semwp{\flatOp{\phi}}$ to $\semwp{\phi}$.
\end{enumerate}
\end{lemma}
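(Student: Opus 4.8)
The plan is to prove both statements simultaneously by induction on the structure of $\phi$, since the two directions interlock: a natural transformation $\semwp{\phi}\to\semwp{\flatOp{\phi}}$ for a subformula feeds the construction of the reverse transformation for the compound formula, and vice versa. Throughout, the key fact is that the base $\mathcal{B}$ contains $\mathcal{N}_{\Delta^\flat}$, so that each NJ-rule has a corresponding atomic rule on the flattened atoms; applying $\mathrm{App}_\mathcal{R}$ to these rules is exactly what produces derivations witnessing the required natural transformations. Recall also that, by Lemma~\ref{lemma:logCompl} itself being stated for all bases $\mathcal{B}\supseteq\mathcal{N}_{\Delta^\flat}$ and all $Q$, the induction hypothesis is available uniformly over the whole category $\mathcal{W}'$, which is what makes the constructed families natural.

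For the base case $\phi = q$ an atom, $\flatOp{q}=q$ by Definition~\ref{def:flatnat}, so $\semwp{\phi}=\semwp{\flatOp{\phi}}$ and both transformations are the identity. For $\phi = \phi_1\wedge\phi_2$: in direction (1), I would use the induction hypothesis to get $\semwp{\phi_i}\to\semwp{\flatOp{\phi_i}}$, compose with the projections $\semwp{\phi_1\wedge\phi_2}\to\semwp{\phi_i}$ to obtain two transformations into $\semwp{\flatOp{\phi_i}}$, hence (by the universal property of the product, since $\semwp{\flatOp{\phi_i}}$ is just the atom presheaf) a transformation into $\semwp{\flatOp{\phi_1}}\times\semwp{\flatOp{\phi_2}}$, and then compose with the natural transformation coming from the $\wedge$I-rule $(\Rightarrow\flatOp{\phi_1}),(\Rightarrow\flatOp{\phi_2})\Rightarrow\flatOp{(\phi_1\wedge\phi_2)}$ of $\mathcal{N}_{\Delta^\flat}$, which $\mathrm{App}_\mathcal{R}$ turns into a map $\semwp{\flatOp{\phi_1}}\times\semwp{\flatOp{\phi_2}}\to\semwp{\flatOp{(\phi_1\wedge\phi_2)}}$. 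Direction (2) is symmetric, using the two $\wedge$E-rules to go from $\semwp{\flatOp{(\phi_1\wedge\phi_2)}}$ to each $\semwp{\flatOp{\phi_i}}$, then the induction hypothesis to each $\semwp{\phi_i}$, then pairing. The case $\phi=\phi_1\supset\phi_2$ is handled analogously, using the adjunction defining the exponential: for (1), from $\semwp{\phi}\times\semwp{\phi_1}\to\semwp{\phi_1\supset\phi_2}\times\semwp{\phi_1}\to\semwp{\phi_2}\to\semwp{\flatOp{\phi_2}}$ one transposes and composes with the rule $(\flatOp{\phi_1}\Rightarrow\flatOp{\phi_2})\Rightarrow\flatOp{(\phi_1\supset\phi_2)}$; here one must first replace $\semwp{\phi_1}$ by $\semwp{\flatOp{\phi_1}}$ using direction (2) of the induction hypothesis before applying the rule, and dually for the converse, which is exactly why the two statements must be proved together.

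The main obstacle is the disjunction case $\phi=\phi_1\vee\phi_2$, because $\semwp{\phi_1\vee\phi_2}$ is the $\forall_\mathcal{A}$-construction, not a coproduct, so there is no universal property to exploit directly. For direction (1), I would unfold $\semwp{\phi_1\vee\phi_2}$ as (the interpretation of) $\bigwedge p.(\phi_1\supset p)\supset(\phi_2\supset p)\supset p$, and construct a natural transformation to $\semwp{\flatOp{(\phi_1\vee\phi_2)}}$ by: given an element of $\semwp{\phi_1\vee\phi_2}$ at a world, instantiate at the atom $p=\flatOp{(\phi_1\vee\phi_2)}$ and feed it the two maps $\semwp{\phi_i}\to\semwp{\flatOp{\phi_i}}\to\semwp{\flatOp{(\phi_1\vee\phi_2)}}$, where the second leg is the natural transformation from the $\vee$I-rule $(\Rightarrow\flatOp{\phi_i})\Rightarrow\flatOp{(\phi_1\vee\phi_2)}$ and the first is the induction hypothesis. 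For direction (2), given an element of $\semwp{\flatOp{(\phi_1\vee\phi_2)}}$, I must produce an element of $\semwp{\phi_1\vee\phi_2}$, i.e., for every atom $p$ and every pair $F_1\colon\semwp{\phi_1}\supset\semwp{p}$, $F_2\colon\semwp{\phi_2}\supset\semwp{p}$, an element of $\semwp{p}$; the construction uses the $\vee$E-rule $(\Rightarrow\flatOp{(\phi_1\vee\phi_2)}),(\flatOp{\phi_1}\Rightarrow p),(\flatOp{\phi_2}\Rightarrow p)\Rightarrow p$ of $\mathcal{N}_{\Delta^\flat}$, after precomposing $F_i$ with the (induction hypothesis) map $\semwp{\flatOp{\phi_i}}\to\semwp{\phi_i}$ to get the hypothetical derivations $\flatOp{\phi_i}\Rightarrow p$ in the right form. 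Verifying naturality of all these assignments — i.e., that they commute with the restriction maps of $\mathcal{W}'$, which act by substitution of derivations — is the bookkeeping heart of the proof, and is where one must be careful that the $\lambda$-calculus-style definitions genuinely respect the functorial structure; this is the same style of argument as in Lemma~\ref{lem:algSoundInd}, and I would refer to that pattern rather than re-deriving it in full.
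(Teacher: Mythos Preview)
Your proposal is correct and follows essentially the same approach as the paper: a simultaneous induction on the structure of $\phi$, using at each connective the corresponding rule of $\mathcal{N}_{\Delta^\flat}$ together with the interlocking induction hypotheses (direction (2) on the antecedent and direction (1) on the succedent for $\supset$, and similarly for $\vee$). Your treatment of the $\vee$ case---instantiating at $p=\flatOp{(\phi_1\vee\phi_2)}$ and feeding the composite maps via $\vee I$ for direction (1), and precomposing the $F_i$ with the induction hypothesis before invoking the $\vee E$-rule for direction (2)---is exactly what the paper does, and is arguably stated a bit more precisely than the paper's version.
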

\begin{proof}
We prove both statements simultaneously by induction over the structure of $\phi$.
\begin{enumerate}
    \item
    \begin{description}[leftmargin=5mm,nosep]
    \item[$\phi = p$] By definition, $\flatOp{p} =p$, hence the identity function has the desired properties.
    \item[$\phi = \phi_1 \wedge \phi_2$] 
    Let $f$ be an element of $\semwp{\phi_1 \wedge \phi_2}(\mathcal{C}, (X\!:\!P))$. Hence $\pi_1(f)$ and $\pi_2(f)$ 
    are elements of $\semwp{\phi_1}(\mathcal{C}, (X\!:\!P))$ and $\semwp{\phi_2}(\mathcal{C}, (X\!:\!P))$, respectively, 
    where where $\pi_1$ and $\pi_2$ are the projections from $\semwp{\phi_1 \wedge \phi_2}$ to $\semwp{\phi_1}$ and $\semwp{\phi_2}$, respectively.
    By the induction hypothesis, there exist functions $\eta_1 : \semwp{\phi_1} \rightarrow \semwp{\flatOp{\phi_1}}$ and $\eta_2 : \semwp{\phi_2} \rightarrow \semwp{\flatOp{\phi_2}}$. By definition, $\eta_1(\pi_1(f))$ and $\eta_2(\pi_2(f))$ are derivations $P \vdash_{\mathcal{C}} \flatOp{\phi_1}$ and  $P \vdash_{\mathcal{C}} \flatOp{\phi_2}$, respectively. Using the $\wedge I$-rule of $\mathcal{N}_{\Delta^\flat}$, there is also a derivation $P \vdash_{\mathcal{C}} \flatOp{(\phi_1 \wedge \phi_2)}$.
    \item[$\phi = \phi_1 \supset \phi_2$] 
    Let $f$ be an element of $\semwp{\phi_1 \supset \phi_2}(\mathcal{C}, (X\!:\!P))$. 
    We start by showing that $P, \flatOp{\phi_1} \vdash_{\mathcal{C}} \flatOp{\phi_2}$. Suppose that $\vdash_{\mathcal{D}} P$ and $\vdash_{\mathcal{D}} \flatOp{\phi_1}$, for $\mathcal{D} \supseteq \mathcal{C}$.
    Then we also have $P \vdash_{\mathcal{D}} \flatOp{\phi_1}$.
    By the induction hypothesis, there exists a function $\eta_1$ from $\semwp{\flatOp{\phi_1}}(\mathcal{D}, X\!:\!P))$ to $\semwp{\phi_1}(\mathcal{D}, X\!:\!P))$. Hence there is also an element $g$ of $\semwp{\phi_1}(\mathcal{D}, X\!:\!P))$. 
    Therefore, $f(\iota, g)$, where $\iota$ is the reverse inclusion of $\mathcal{C}$ into $\mathcal{D}$, is an element of $\semwp{\phi_2}(\mathcal{D}, (X\!:\!P))$.
    By the induction hypothesis, there is also a function $\eta_2$ from $\semwp{\phi_2}(\mathcal{D}, X\!:\!P))$ to $\semwp{\flatOp{\phi_2}}(\mathcal{D}, X\!:\!P))$. 
    Hence there is a derivation $P \vdash_{\mathcal{D}} \flatOp{\phi_2}$, and therefore by cut, also a derivation $\vdash_{\mathcal{D}} \flatOp{\phi_2}$. The $\supset I$-rule of $\mathcal{N}_{\Delta^\flat}$ now produces a derivation $P \vdash_{\mathcal{C}} \flatOp{(\phi_1 \supset \phi_2)}$.
    \item[$\phi = \phi_1 \vee \phi_2$]
   Let $f$ be an element of $\semwp{\phi_1 \vee \phi_2}(\mathcal{C}, (X\!:\!P))$. By definition, there exists also an element of $\semwp{(\phi_1 \supset \flatOp{(\phi_1 \vee \phi_2)}) \supset (\phi_2 \supset \flatOp{(\phi_1 \vee \phi_2)}) \supset  \flatOp{(\phi_1 \vee \phi_2})}(\mathcal{C}, (X\!:\!P))$. By the induction hypothesis, there exists also a derivation 
   $P \vdash_{\mathcal{C}} (\flatOp{\phi_1} \supset \flatOp{(\phi_1 \vee \phi_2)}) \supset (\flatOp{\phi_2} \supset \flatOp{(\phi_1 \vee \phi_2)}) \supset  \flatOp{(\phi_1 \vee \phi_2)}$. Now an application of the $\vee I$-rules of $\mathcal{N}_{\Delta^\flat}$ yields $P \vdash_{\mathcal{C}} \flatOp{(\phi_1 \vee \phi_2)}$. 
    \end{description}
    \item 
    \begin{description}
    \item[$\phi = p$] 
    By definition, $\flatOp{p} =p$, hence the identity function has the desired properties.
    \item[$\phi = \phi_1 \wedge \phi_2$] Suppose $P \vdash_{\mathcal{C}} \flatOp{(\phi_1 \wedge \phi_2)}$. Using the $\wedge E$-rule of $\mathcal{N}_{\Delta^\flat}$ we obtain derivations $\Phi_1$ of $P \vdash_{\mathcal{C}} \flatOp{\phi_1}$ and $\Phi_2$ of 
    $P \vdash_{\mathcal{C}} \flatOp{\phi_2}$ respectively. 
        By the induction hypothesis, there exist functions $\eta_1 : \semwp{\flatOp{\phi_1}}(\mathcal{C}, (X\!:\!P)) \rightarrow \semwp{\phi_1}(\mathcal{C}, (X\!:\!P))$ and $\eta_2 : \semwp{\flatOp{\phi_2}}(\mathcal{C}, (X\!:\!P)) \rightarrow \semwp{\phi_2}(\mathcal{C}, (X\!:\!P))$. Hence $(\eta_1(\Phi_1), \Phi_2)$ is an element of $\semwp{\phi_1 \wedge\phi_2}(\mathcal{C}, (X\!:\!P))$.
 \item[$\phi = \phi_1 \supset \phi_2$]
 By the definition of function spaces, it suffices to show the existence of a function from $\semwp{\flatOp{(\phi_1 \supset \phi_2)}}(\mathcal{C}, (X\!:\!P)), \semwp{\phi_1}(\mathcal{C}, (X\!:\!P))$ to $\semwp{\phi_2}(\mathcal{C}, (X\!:\!P))$.
 Let $\Phi$ be a derivation $P \vdash_{\mathcal{C}} \flatOp{(\phi_1 \supset \phi_2)} $ and $f$ be an element of $\semwp{\phi_1}(\mathcal{C}, (X\!:\!P))$. By the induction hypothesis, there exists a function $\eta_1:  \semwp{\phi_1}(\mathcal{C}, (X\!:\!P)) \rightarrow \semwp{\flatOp{\phi_1}}(\mathcal{C}, (X\!:\!P))$, and therefore also a derivation $P \vdash_{\mathcal{C}} \flatOp{\phi_1}$. The $\supset E$-rule of $\mathcal{N}_{\Delta^\flat}$ now yields a derivation of $P \vdash_{\mathcal{C}} \flatOp{\phi_2}$. By the induction hypothesis, there exists a function $\eta_2: \semwp{\flatOp{\phi_2}}(\mathcal{C}, (X\!:\!P)) \rightarrow \semwp{\phi_2}(\mathcal{C}, (X\!:\!P))$, and therefore also an element  of $\semwp{\phi_2}(\mathcal{C}, X\!:\!P))$.
 \item[$\phi = \phi_1 \vee \phi_2$]
  It suffices to show that, for all atoms $p$, there is a function from $\semwp{\flatOp{(\phi_1 \vee \phi_2)}}(\mathcal{C}, (X\!:\!P)), \semwp{\phi_1 \supset p}(\mathcal{C}, (X\!:\!P)),  \semwp{\phi_2 \supset p}(\mathcal{C}, (X\!:\!P))$ to $\semwp{p}(\mathcal{C}, (X\!:\!P))$. 
    Suppose $P \vdash_{\mathcal{C}} \flatOp{(\phi_1 \vee \phi_2)}$ and let $f_1$ and $f_2$ be elements of $\semwp{\phi_1 \supset p}(\mathcal{C}, (X\!:\!P))$ and $ \semwp{\phi_2 \supset p}(\mathcal{C}, (X\!:\!P))$ respectively. By the induction hypothesis, there exists also derivations $P \vdash_{\mathcal{C}} \flatOp{(\phi_1 \supset p)}$ and   $P \vdash_{\mathcal{C}} \flatOp{(\phi_2 \supset p)}$. The $\vee E$-rule of $\mathcal{N}_{\Delta^\flat}$ now yields a derivation of $P \vdash_{\mathcal{C}} p $. 
 \end{description}
\end{enumerate}
This completes the proof. 
\end{proof}

\begin{theorem}[Completeness] \label{thm:CatCompleteness}
If $\Gamma \models \phi$, then $\Gamma \vdash \phi$. 
\end{theorem}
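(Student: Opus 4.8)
The plan is to use the flattening isomorphism of Lemma~\ref{lemma:logCompl} to reduce the statement to the purely atomic case --- where a natural transformation between interpretations of atoms immediately yields an atomic derivation in the special base $\mathcal{N}_{\Delta^\flat}$ --- and then to recover an NJ proof by unflattening that derivation, exactly as in Sandqvist~\cite{Sandqvist2015IL}.

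Concretely, suppose $\Gamma = \phi_1,\ldots,\phi_n$ and $\Gamma \models \phi$, so there is a natural transformation $\eta\col\sem{\Gamma}\to\sem{\phi}$ in $[\mathcal{W}\op,\Set]$. Let $\Delta$ contain all of $\Gamma\cup\{\phi\}$ together with their subformulae, form $\Delta^\flat$ and the base $\mathcal{N}_{\Delta^\flat}$ of Definition~\ref{def:N}, and restrict attention to a full subcategory $\mathcal{W}'$ on which Lemma~\ref{lemma:logCompl} applies (e.g.\ $\mathcal{W}_{\mathcal{N}_{\Delta^\flat}}$, with the context $\flatOp{\Gamma}$ available). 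Restricting $\eta$ gives $\semwp{\Gamma}\to\semwp{\phi}$. Applying Lemma~\ref{lemma:logCompl}(2) to each $\phi_i$, taking products, then Lemma~\ref{lemma:logCompl}(1) to $\phi$, and composing, I obtain a natural transformation
\[
\semwp{\flatOp{\Gamma}} \;\longrightarrow\; \semwp{\Gamma} \;\xrightarrow{\ \eta\ }\; \semwp{\phi} \;\longrightarrow\; \semwp{\flatOp{\phi}},
\]
which, since $\flatOp{\Gamma} = \flatOp{\phi_1},\ldots,\flatOp{\phi_n}$ is a list of atoms and $\flatOp{\phi}$ is an atom, is a natural transformation between interpretations of atoms.

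Next I would run the base case of Lemma~\ref{lem:alg-complete}: evaluate this natural transformation at the object $(\mathcal{N}_{\Delta^\flat},(X\col\flatOp{\Gamma}))$ on the tuple of projections $(x_1,\ldots,x_n)$; by the clause for atoms in Definition~\ref{def:int-func}, the result is a derivation $\flatOp{\Gamma}\vdash_{\mathcal{N}_{\Delta^\flat}}\flatOp{\phi}$. Finally, I would unflatten: a derivation in $\mathcal{N}_{\Delta^\flat}$ from the atoms $\flatOp{\Gamma}$ to the atom $\flatOp{\phi}$ translates, by induction on its structure, into an NJ derivation of $\natOp{\flatOp{\Gamma}} = \Gamma \vdash \natOp{\flatOp{\phi}} = \phi$, each rule of $\mathcal{N}_{\Delta^\flat}$ being sent to the corresponding rule of Figure~\ref{fig:nj} --- which is exactly the argument of Sandqvist~\cite{Sandqvist2015IL}. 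Hence $\Gamma\vdash\phi$.

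The main obstacle is this last step: checking that the $\natOp{-}$-image of a base derivation is a genuine NJ derivation, in particular that the hypothesis-discharge structure of the $\supset$I and $\vee$E rules is respected and that the atomic-conclusion restrictions on $\vee$E and $\bot$ are handled correctly (here one exploits that if the atomic conclusion $p$ equals some $\flatOp{\chi}$, then $\natOp{p} = \chi$). This bookkeeping is routine but delicate, and since Sandqvist carries it out in full I would cite it rather than reproduce it. (Alternatively, the theorem follows at once by combining Proposition~\ref{prop:equiv} with Sandqvist's base-extension completeness, Theorem~\ref{thm:BaseCompleteness}; I prefer the route through Lemma~\ref{lemma:logCompl} because it keeps the argument inside the categorical framework and pinpoints precisely where an external appeal is unavoidable.)
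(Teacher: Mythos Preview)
Your proposal is correct and follows essentially the same route as the paper: restrict to $\mathcal{W}_{\mathcal{N}_{\Delta^\flat}}$, sandwich the given $\eta$ between the two directions of Lemma~\ref{lemma:logCompl} to obtain a natural transformation $\semwp{\flatOp{\Gamma}}\to\semwp{\flatOp{\phi}}$, evaluate at the identity context to extract a derivation $\flatOp{\Gamma}\vdash_{\mathcal{N}_{\Delta^\flat}}\flatOp{\phi}$, and then invoke Sandqvist's Theorem~5.1 for the unflattening. The paper's proof is terser --- it compresses the composition step into a single appeal to Lemma~\ref{lemma:logCompl} --- but the underlying argument is the same, and your explicit account of how $\eta$ is used is if anything clearer.
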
 
\begin{proof}
 Let $\Delta^\flat$ be the flattening of $\Gamma$ and $\phi$. 
Let $\mathcal{N}_{\Delta^\flat}$ be the special base given by Definition~\ref{def:N} for this set of atoms.
By Lemma~\ref{lemma:logCompl}, there exists a function $\eta$ from $\sem{\flatOp{\Gamma}}(\mathcal{N}_{\Delta\flat}, (X:\flatOp{\Gamma}))$ to $\sem{\flatOp{\phi}}(\mathcal{N}_{\Delta^\flat}, (X:\flatOp{\Gamma}))$. 
By definition, $\eta(id)$ is a derivation $\flatOp{\Gamma} \vdash_{\mathcal{N}_{\Delta^\flat}} \flatOp{\phi}$. 
Theorem~5.1 of Sandqvist now implies $\Gamma \vdash \phi$ in NJ.
\end{proof}



\section{Disjunction and its Interpretation} 
\label{sec:disjunction} 

There is only one interpretation of implication in any
given semantics. This is not the case for disjunction. Often
there are several ways of interpreting disjunction for a particular
semantics with sometimes significantly different properties. In this
section, we discuss possible alternatives for interpreting disjunction
and give a justification for the interpretation of disjunction used
for base-extension semantics.

The validity of formulae is commonly defined by an induction over the structure 
of formulae. The clause for disjunction usually states --- for example, in 
elementary Kripke-style semantics --- that a disjunction $\phi \vee \psi$ is 
valid iff $\phi$ or $\psi$ are valid. This clause is problematic for a notion 
of validity using a proof-theoretic semantics, as completeness does not hold, 
as shown in \cite{Piecha2015failure}: specifically, the authors show that with 
this definition of validity, Harrop's rule is valid; however, Harrop's rule 
is not derivable in intuitionistic logic.  

From the inferentialist perspective, Kripke's clause is too strong because it assumes that the suasive content of a disjunction is identical to that of its disjuncts. However, Sandqvist's treatment, corresponding to the $\vee$-elimination rule of NJ for disjunction, expresses that whatever can be inferred from both disjuncts can be inferred from the disjunction. It is this correspondence that allows completeness to go through. 

Categorically, disjunction is typically interpreted as a coproduct. But 
coproducts are defined as a left adjoint. From the logical perspective, 
this means they are determined by the elimination rule, and hence that a 
characterization in terms of the introduction rule does not fit with the 
categorical interpretation, absent some strong properties on the 
indecomposability of formulae.

Instead, then, the validity of a disjunction is given by the elimination rule: if 
$\phi \vee \psi$ is valid, and for all formulae $\chi$, if $\phi$ is valid 
implies $\chi $ is valid and if $\psi$ is valid implies $\chi$ is valid, 
then $\chi$ is valid. Sandqvist's definition of validity for disjunction 
restricts $\chi$ to atoms (cf. \cite{Tennant78entailment,Tennant2017core}). In this way, we obtain an 
inductive definition of validity. Our categorical interpretation models 
this definition of validity. Completeness can be obtained by using a basis 
$\mathcal{N}$, which has a separate atom denotating each formula. It follows 
that the quantification over atoms captures quantification over all formulae. 

We can also use the categorical definition of validity --- as set 
up in Definition~\ref{def:cat-base-val} --- to show that the interpretation of disjunction 
by coproducts leads to incompleteness by proving that completeness would imply that the rule 
    \[
        \infer{(p \supset q) \vee (p \supset r)}{p \supset q \vee r}      
    \]
was derivable. This rule is a strong disjunction property and it is not
derivable in IPC. The argument in the proof of the theorem is a
reformulation of an argument by Sandqvist~\cite{Sandqvist2015IL}: 
\begin{theorem} \label{thm:interpretation-of-v} 
      Suppose in Definition \ref{def:int-func}, the definitional clause for $\sem{\phi\vee\psi}$ is replaced by 
      \begin{itemize}
          \item[--] $\sem{\phi \vee \psi}$ is the coproduct  $\sem{\phi} + \sem{\psi}$.
      \end{itemize} 
      Then there is a natural
      transformation from $\sem{p \supset (q \vee r)}$ to $\sem{(p
      \supset q) \vee (p \supset r)}$.
\end{theorem}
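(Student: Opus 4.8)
The plan is to show that, once the $\vee$-clause is replaced by the categorical coproduct, the two presheaves $\sem{p \supset (q \vee r)}$ and $\sem{(p \supset q) \vee (p \supset r)}$ are in fact naturally \emph{isomorphic}, so that the required natural transformation drops out at once. This is the categorical form of Sandqvist's own observation that, under the naive disjunction clause, $\Vdash_{\mathcal{B}} p \supset (q \vee r)$ already forces $\Vdash_{\mathcal{B}} p \supset q$ or $\Vdash_{\mathcal{B}} p \supset r$: his move ``pass from $\mathcal{B}$ to $\mathcal{B} \cup \{\Rightarrow p\}$ and then discharge the adjoined axiom'' corresponds here to the Yoneda lemma applied to the representability of the interpretations of atoms.

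First I would unwind the two interpretations under the modified clause, obtaining $\sem{p \supset (q \vee r)} = \sem{p} \supset (\sem{q} + \sem{r})$ and $\sem{(p \supset q) \vee (p \supset r)} = (\sem{p} \supset \sem{q}) + (\sem{p} \supset \sem{r})$, where $+$ denotes the coproduct of presheaves, computed pointwise. The crucial step is a lemma about the category $\mathcal{W}$ itself: fix an atom $p$; for a world $w = (\mathcal{B}, (X \col P))$, choose a variable $z$ not occurring in $X$ and set $w^{z} = (\mathcal{B}, (X \col P, z \col p))$. Unwinding Definition~\ref{def:bases-contexts}, a morphism $v \to w^{z}$ is precisely a morphism $v \to w$ together with a derivation of $p$ in the source world, that is, an element of $\sem{p}(v)$; hence there is an isomorphism of presheaves $h^{w^{z}} \cong h^{w} \times \sem{p}$. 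Taking $\mathcal{B} = \emptyset$ and $(X \col P)$ empty gives the special case $\sem{p} \cong h^{(\emptyset, (z \col p))}$: the interpretation of an atom is itself a representable functor.

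Next I would combine this with the characterization of $\supset$ as a right adjoint (Lemma~\ref{lemma:right-adjoints-presheaves}, giving $(\sem{p} \supset G)(w) = \mathop{Nat}(h^{w} \times \sem{p}, G)$) and the Yoneda lemma, to conclude that for any presheaf $G$
\[
  (\sem{p} \supset G)(w) \;=\; \mathop{Nat}(h^{w} \times \sem{p},\, G) \;\cong\; \mathop{Nat}(h^{w^{z}},\, G) \;\cong\; G(w^{z})
\]
naturally in $w$. Instantiating $G = \sem{q} + \sem{r}$ and using pointwise coproducts gives $\sem{p \supset (q \vee r)}(w) \cong \sem{q}(w^{z}) \sqcup \sem{r}(w^{z})$, while instantiating $G = \sem{q}$ and $G = \sem{r}$ and reassembling gives $\sem{(p \supset q) \vee (p \supset r)}(w) = \sem{p \supset q}(w) \sqcup \sem{p \supset r}(w) \cong \sem{q}(w^{z}) \sqcup \sem{r}(w^{z})$. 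Composing these natural isomorphisms yields $\sem{p \supset (q \vee r)} \cong \sem{(p \supset q) \vee (p \supset r)}$, and hence in particular a natural transformation as claimed. Traced through, the transformation sends $\alpha \in \sem{p \supset (q \vee r)}(w)$ --- regarded as a natural transformation $h^{w} \times \sem{p} \to \sem{q} + \sem{r}$ --- to its value $\alpha_{w^{z}}(\pi, z)$ on the pair formed by the projection $\pi \col w^{z} \to w$ and the adjoined hypothesis $z \in \sem{p}(w^{z})$, which is exactly the categorical counterpart of adjoining $\Rightarrow p$ and reading off a derivation of $q$ or of $r$.

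The main obstacle I anticipate is not any individual isomorphism but the bookkeeping needed to make the whole chain \emph{natural in $w$}: one must promote $w \mapsto w^{z}$ to a functor on $\mathcal{W}$ (fixing a convention for the choice of fresh variable --- harmless, since everything is determined up to canonical isomorphism), verify that $h^{w^{z}} \cong h^{w} \times \sem{p}$ is natural in $w$ and not merely in the presheaf argument, and check compatibility with the pointwise coproduct. This is routine but is the only place genuine care is needed; the conceptual content is entirely the representability of atoms together with the Yoneda lemma, and what remains is a diagram chase.
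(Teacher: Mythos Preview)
Your argument is correct and in fact proves more than the paper does: you obtain a natural \emph{isomorphism}, not merely a natural transformation. The route, however, is genuinely different from the paper's. The paper passes from a world $(\mathcal{B},(X\col P))$ to the world $(\mathcal{B}\cup\{\Rightarrow p\},(X\col P))$ obtained by \emph{extending the base} with a new axiom; evaluating the given $\eta$ there yields a derivation of $q$ or of $r$ in the enlarged base, and the paper then needs a separate inductive argument to transport that derivation back to an arbitrary $(\mathcal{D},(Y\col Q))$ with $\mathcal{B}\subseteq\mathcal{D}$ and a chosen proof of $p$, by replacing each use of the adjoined axiom with that proof. You instead pass to $(\mathcal{B},(X\col P,z\col p))$ by \emph{extending the context}; because morphisms in $\mathcal{W}$ are already built from substitution of derivations, this immediately makes $h^{w}\times\sem{p}$ representable by $w^{z}$, and the Yoneda lemma absorbs exactly the transport step the paper does by hand. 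Your observation that $\sem{p}$ is itself representable (by $(\emptyset,(z\col p))$) is the conceptual core, and it is not made explicit in the paper. What the paper's approach buys is that it stays closer to Sandqvist's original move of adjoining $\Rightarrow p$ as a \emph{rule}, which is what one would do in the proof-theoretic setting without contexts; what your approach buys is a cleaner argument with no side induction, together with the stronger conclusion that the two presheaves are isomorphic, making the failure of completeness under the coproduct clause even more transparent.
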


\begin{proof}
  Consider the rule 
   \[
        \infer{(p \supset q) \vee (p \supset r)}{p \supset q \vee r}      
   \]
  
  Now consider any $(\mathcal{B}, (X\!:\!P))$ that satisfies $p \supset (q \vee r)$, 
  which means $\sem{p \supset (q \vee r)}(\mathcal{B}, (X \col P))$ is 
  non-empty.  Hence there is a natural transformation $\eta$ from 
  $\mathcal{W}(-, (\mathcal{B}, (X\!:\!P)))\times \sem{p}$ to $\sem{q \vee r}$.
  Let $\mathcal{C}$ be the base that is obtained by adding the rule
  $\Rightarrow\! p$ to $\mathcal{B}$. Let $\Phi$ the derivation of $P
  \vdash_{\mathcal{C}} p$ in $\mathcal{C}$ obtained by applying this rule. 
  Let $\iota$ be the reverse inclusion map from $(\mathcal{C}, P)$ to $(\mathcal{B}, P)$.
  Hence, $\eta_{(\mathcal{C}, P)}(\iota, \Phi)$ is an element of $\sem{q \vee
    r}(\mathcal{C}, (X\!:\!P)) = \sem{q}(\mathcal{C}, (X\!:\!P)) + \sem{r}(\mathcal{C}, (X\!:\!P))$.
  Therefore, $\eta_{(\mathcal{C}, P)}(\iota, \Phi)$ is an element of
  $\sem{q}(\mathcal{C}, (X\!:\!P))$ or $\sem{r}(\mathcal{C}, (X\!:\!P))$. 
  
  Now consider the first case of this disjunction; that is, in which there is a derivation of 
  $X\!:\!P \vdash_{\mathcal{C}} q$. We must show that there is an element of 
  $\sem{p \supset q}(\mathcal{B}, (X\!:\!P)) = Nat(\mathcal{W}(-,
  (\mathcal{B}, (X\!:\!P)) \times \sem{p}, \sem{q})$.
  Consider any base $\mathcal{D} \supseteq \mathcal{B}$ and set of
  atoms $Q$ such that $\Phi \col Y:Q \vdash_{\mathcal{D}} P$ and there
  exists a derivation $\Psi \col Y:Q \vdash_{\mathcal{D}} p$. 

  Now we show by induction over the derivation of $Z:S, X\!:\!P \vdash_{\mathcal{C}} s$ 
  that there exists a derivation $Z:S, Y:Q \vdash_{\mathcal{D}} s$. 
  
  For any rule other than $\Rightarrow p$, the substitution of $\Phi$ for $X$ yields a 
  derivation $Z:S, Y:Q \vdash_{\mathcal{D}} s$. Now suppose the last rule is 
  $Z:S, X\!:\!P \Rightarrow p$. By assumption, there is also a derivation of
  $Z:S, Y:Q \vdash_{\mathcal{D}} p$.
  
  By a similar argument, we show that in the second case there is an
  element of $\sem{p \supset r}(\mathcal{B}, (X\!:\!P))$. It therefore follows that 
  there is also an element of 
  \[
      (\sem{p \supset q} \vee \sem{p \supset r})(\mathcal{B}, (X\!:\!P) 
  \] 

\vspace{-5.5mm}  
\end{proof}

It follows that completeness cannot hold if disjunction is interpreted by
coproducts (cf. the distinction between coproducts in presheaves and sheaves, 
as discussed in Section~\ref{sec:kripke}). This proof can be generalized by 
replacing the atom $p$ by any formula $\phi = p_n \supset \cdots \supset p_1 \supset p_0$, 
where all $p_i$s are atoms. The modification of the proof consists of replacing $\sem{p}$ 
by $\sem{ p_n \supset \cdots \supset p_1 \supset p_0}$ and using this natural 
transformation instead of the rule $\Rightarrow p$ to construct a derivation 
of $Z:S, Y:Q \vdash_{\mathcal{D}} s$.

The proof exploits the fact that the extensions in a base extension
semantics have more structure. In particular, for each base 
$\mathcal{B}$ and for each formula $\phi$ as specified above one can construct a rule
that corresponds to adding a derivation of $\phi$. Furthermore, 
there is a particular extension $\mathcal{C}$ of the base $\mathcal{B}$ 
(namely the one that just adds this rule) such that whenever an atom is 
derivable in $\mathcal{C}$, it is derivable in any extension of $\mathcal{B}$, 
which satisfies the formula $\phi$. If disjunction is modelled by coproducts, 
this implies a strong disjunction property: that there is a morphism from
$\sem{\phi \supset (q \vee r)}(\mathcal{B}, P)$ to $\sem{(\phi \supset
  q) \vee (\phi \supset r)}(\mathcal{B}, P)$.

This particular extension $\mathcal{C}$ does not necessarily exist
in a Kripke semantics, hence this argument does not hold for Kripke 
semantics.









\subsection*{Acknowledgments} 

We are grateful to the anonymous reviewers and to Alexander Gheorghiu and Tao Gu for helpful comments on this work.

\bibliographystyle{plain}
\bibliographystyle{plain}

\appendix 

\section{Remarks on Consequence Relations}
\label{sec:consequence-relations}

It will turn out that the significance of the base $\mathcal{N_{-}}$ goes way beyond its use
in Sandqvist's completeness theorem. Specifically, the algebraic interpretation 
that we give in Section~\ref{sec:categorical} makes essential use of the canonical proofs 
of atomic propositions of the form $\phi^\flat$ available in $\mathcal{N_{-}}$ in order to 
define the natural transformations that we needed in our interpretation.  


As we noted, one of the key issues for proof-theoretic 
semantics is its completeness, or lack of it \cite{Piecha2016completeness,Piecha2015failure}. 
However, this issue is more delicate than it at first seems. 

To begin with, we need to be clear about the judgements being used. Different 
forms of judgement give different answers to questions of soundness and 
completeness. At the most basic level, we have validity: the basic judgement 
is whether a formula is valid. Soundness means that theorems (provable 
formulae) should be valid in the semantics and completeness that any 
formula which is always valid in the semantics should be a theorem. 
This, however, is a fairly weak notion, and the judgements used are often 
strengthened to sequents. In this case, the soundness and completeness 
correspond to preservation and reflection of the validity of logical 
consequences, and there is more need to be specific how things 
are set up. This occurs even when we are just considering which consequences are valid. 
There is a further level of complexity when we come to consider possible 
representations of validations. 

The common standard formulation is that a consequence relation is a 
relation expressing that a formula is a consequence of a set of other 
formulae. This concept is abstractly captured in the form of a reflexivity 
property, a monotonicity property, and a compositionality property,  
the last corresponding to cut. 

\begin{definition}
A (finitary) consequence relation is a relation \(\Gamma\cons p\) 
between a (finite) set of formulae and a formula, such that: 
\begin{enumerate}
    \item[--] $\phi\cons \phi$
    \item[--] if $\Gamma \cons \phi$, then $\Gamma,\psi \cons \phi$
    \item[--] if $\Gamma \cons \phi$ and $\Delta,\phi \cons \psi$, then 
    $\Gamma,\Delta \cons \psi$.
\end{enumerate}
\end{definition}

There is a standard way of getting a consequence relation from a definition 
of validity. We write $\valid \phi$ to mean that $\phi$ is valid. 

\begin{definition}\label{def:cons-from-valid}
The \emph{consequence relation generated from a validity} is defined by 
$\Gamma \cons \psi$ iff  if $\valid\gamma$, for all $\gamma\in \Gamma$, then $\valid\psi$. 
\end{definition}

Note that we can recover validity from consequence:
\[\mbox{$\valid \psi$ iff $\emptyset\cons \psi$}\]

However, different consequence relations can give rise to the same 
validity. We therefore need to be careful about which consequence relations 
we are using and why. But the consequence relation defined above is canonical 
in the following sense: 
\begin{lemma}\label{lem:finitary-consequence-max}
The finitary consequence relation generated from a validity is the largest 
finitary consequence relation that corresponds to that validity. 
\end{lemma}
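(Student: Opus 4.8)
The plan is to unwind the two definitions and then run a short finite induction. Write $\cons_0$ for the consequence relation generated from the validity $\valid$ via Definition~\ref{def:cons-from-valid}, and say that a finitary consequence relation $\cons$ \emph{corresponds to} $\valid$ if $\emptyset\cons\psi$ iff $\valid\psi$ for every formula $\psi$; this is the precise sense of ``corresponds to that validity'', since validity is recovered from any consequence relation by $\valid\psi$ iff $\emptyset\cons\psi$. First I would dispatch the parts of the statement that are essentially definitional: that $\cons_0$ is indeed a finitary consequence relation, and that it corresponds to $\valid$. Reflexivity, monotonicity and cut for $\cons_0$ follow immediately from the matching propositional reasoning in the metatheory --- for cut, say, if $\valid\gamma$ for all $\gamma\in\Gamma\cup\Delta$, then $\Gamma\cons_0\phi$ gives $\valid\phi$ and then $\Delta,\phi\cons_0\psi$ gives $\valid\psi$ --- while correspondence holds because the antecedent ``$\valid\gamma$ for all $\gamma\in\emptyset$'' is vacuous, so $\emptyset\cons_0\psi$ iff $\valid\psi$.

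The substantive part is maximality: I would fix an arbitrary finitary consequence relation $\cons$ that corresponds to $\valid$ and show that $\Gamma\cons\psi$ implies $\Gamma\cons_0\psi$. So suppose $\Gamma\cons\psi$; since $\cons$ is finitary, $\Gamma=\{\gamma_1,\dots,\gamma_n\}$ is finite. To conclude $\Gamma\cons_0\psi$, assume $\valid\gamma_i$ for every $i$; by correspondence this gives $\emptyset\cons\gamma_i$ for every $i$. Now I would remove the hypotheses one at a time by cut: applying cut to $\emptyset\cons\gamma_1$ and $\{\gamma_1,\dots,\gamma_n\}\cons\psi$ --- the latter read with $\Gamma'=\emptyset$, $\phi=\gamma_1$, and $\Delta'=\{\gamma_2,\dots,\gamma_n\}$ in the statement of cut --- yields $\{\gamma_2,\dots,\gamma_n\}\cons\psi$, and iterating $n$ times yields $\emptyset\cons\psi$, hence $\valid\psi$ by correspondence. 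This is exactly $\Gamma\cons_0\psi$.

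I do not expect a genuine obstacle; the points that need care are purely bookkeeping. One is that the rules are stated for sets rather than multisets: if $\gamma_1$ reoccurs among $\gamma_2,\dots,\gamma_n$, the cut step at that stage leaves $\Gamma$ unchanged rather than shrinking it, but $\Gamma$ has only finitely many distinct members, so finitely many cuts still reach the empty context. The other, which is precisely why the statement concerns the largest \emph{finitary} consequence relation, is that this reduction of $\Gamma$ to $\emptyset$ takes $|\Gamma|$ steps: for an infinite set of hypotheses there is no guarantee of reaching $\emptyset$ in finitely many cuts, so the argument genuinely uses finiteness of $\Gamma$.
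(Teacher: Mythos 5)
Your proof is correct: the verification that the generated relation is a finitary consequence relation corresponding to $\valid$, and the maximality argument via iterated cut against $\emptyset\cons\gamma_i$ to reduce a finite hypothesis set to $\emptyset$, is exactly the intended (and standard) argument; the paper itself states this lemma without proof, so you are supplying precisely what is left implicit. Your closing remarks on the set-versus-multiset bookkeeping and on where finiteness is genuinely used are also accurate and match the paper's own observation that the claim fails for infinitary consequence relations.
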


This property does not hold if we allow consequences $\Gamma\cons\psi$ 
where $\Gamma$ is infinite (the largest consequence relation corresponding 
to a validity is $\Gamma\cons\psi$ iff $\Gamma$ is finite and $\psi$ is  a 
consequence of $\Gamma$ as per the finite case, or $\Gamma$ is infinite 
and $\psi$ is arbitrary). Moreover, the structures do not work well for 
substructural logic, and must be changed to a relation between an element 
of an algebra of formulae and a formula. 

Definition \ref{def:cons-from-valid} links strongly to the notion of an 
admissible rule. A rule is said to be admissible if its application 
preserves validity: if, in an instance, all hypotheses are valid, then so is 
the conclusion. But proof systems generate a different natural notion of 
consequence: the relation that expresses that a formula can be proved 
from a set of hypotheses. 


\end{document}